\documentclass[10pt]{article}
\usepackage{amsmath, amsthm, setspace, graphicx, subfig}  
\usepackage{latexsym, amsfonts, amssymb}
\usepackage[all,cmtip]{xy} 
\usepackage{subfig}
\usepackage[titles,subfigure]{tocloft}
\usepackage{bbold} 

\usepackage{xcolor}

\parskip = 0.06in 
\parindent = 0.0in
\topmargin = 0.0in
\oddsidemargin = 0.0in
\evensidemargin = 0.0in
\textwidth = 7in


\newtheorem{thm}{Theorem}
\newtheorem{cor}[thm]{Corollary} 
\newtheorem{lem}[thm]{Lemma} 
\newtheorem{prop}[thm]{Proposition}
\newtheorem{defn}[thm]{Definition}
\newtheorem{remark}[thm]{Remark} 
\newtheorem{remarks}[thm]{Remarks}

\newtheorem{example}[thm]{Example}
\newtheorem{examples}[thm]{Examples}

\def\a{\alpha}
\def\b{\beta} 
 
\def\d{\delta}
\def\e{\epsilon}

\def\l{\lambda} 
\def\s{\sigma} 
\def\t{\tau} 
\def\o{\omega}
\def\ch{\it{Ch}}
\def\fr{\it{fr}}

\def\G{\Gamma}
\def\D{\Delta}
\def\L{\Lambda} 
\def\O{\Omega}    

\def\R{\mathbb{R}}

\def\p{\partial} 
\def\i{\infty}
\def\cal{\mathcal}
\def\B{\cal{B}}

\def\hB{\hat{\cal{B}}}

\def\P{\cal{P}}
\def\A{\cal{A}}
\def\per{\it{per}}
\def\det{\it{det}} 
\def\vec{\it{vec}}
\def\supp{\it{supp}}
\def\<{\langle}
\def\>{\rangle}

\def\rotateminus{\reflectbox{\rotatebox[origin=c]{155}{\hspace{.6pt}-}}}
\def\Xint#1{\mathchoice
{\XXint\displaystyle\textstyle{#1}}%
{\XXint\textstyle\scriptstyle{#1}}%
{\XXint\scriptstyle\scriptscriptstyle{#1}}%
{\XXint\scriptscriptstyle\scriptscriptstyle{#1}}%
\!\int}
\def\XXint#1#2#3{{\setbox0=\hbox{$#1{#2#3}{\int}$}
\vcenter{\hbox{$#2#3$}}\kern-.5\wd0}}

\def\cint{\Xint \rotateminus } 
\renewcommand{\labelenumi}{(\alph{enumi})} 

%

\numberwithin{thm}{subsection} 
\numberwithin{equation}{section} 


\begin{document}

	\title{Operator Calculus of Differential Chains and Differential Forms }
	  \author{J. Harrison 
	  \\Department of Mathematics 
	  \\University of California, Berkeley }
  \date{}
	
	\maketitle
 
	\begin{abstract} 	
		Differential chains are a proper subspace of de Rham currents given as an inductive limit of Banach spaces endowed with a geometrically defined strong topology.  Boundary is a continuous operator, as are operators that dualize to Hodge star, Lie derivative, pullback and interior product. Partitions of unity exist in this setting, as does Cartesian wedge product.  Subspaces of finitely supported Dirac chains and polyhedral chains are both dense, leading to a unification of the discrete with the smooth continuum.  We conclude with an application generalizing a simple version of the Reynolds' Transport Theorem to rough domains.
	\end{abstract}       
 	   
\section{Introduction} 
\label{sec:introduction} 
A new analytic and topological formulation of a \( k \)-dimensional domain of integration in an open subset \( U \) of \( \R^n \) is presented by way of a chain complex of topological vector space of domains with a number of useful properties.  In particular, the topological vector space \( \hB \) of domains called ``differential chains'' has good compactness properties, making it useful for solving existence problems in the calculus of variations (\cite{plateau}, \cite{plateau10}).   The space \( \hB \) is endowed with the strong (Mackey) topology (\ref{sub:history}) which has an explicit definition.  It includes analogues of classical \( k \)-dimensional objects in \( \R^n \), but excludes many pathologies seen in currents, making it easier, for example, to obtain physically realistic solutions for problems in the calculus of variations.   

Let \( \L^k(\R^n) \) be the \( k \)-th exterior power of \( \R^n \).  We call \( (p;\a) \) a \( k \)-\emph{\textbf{element}} if \( p\in U \) and \( \a \in \L_k(\R^n) \).  When \( \a \) is a simple \( k \)-vector, \( (p;\a) \) can be viewed as either a de Rham current or as a single-point geometric structure. Its ``boundary'' is a \( (k-1) \)-element \( (p; \b) \) where \( \b \) is a non-simple \( (k-1) \)-vector if \( \a \ne 0 \).    The collection of all \( k \)-elements generates a vector space \( \A_k(U) \) whose elements are called ``Dirac \( k \)-chains''.   A one-parameter family of norms on the space  \( \A_k(U) \) arises naturally. The inductive limit of the resulting Banach spaces endowed with the locally convex inductive limit topology \( \tau \) is the space \(  \hB_k(U) := ({\cal A_k}(U), \t) \) of differential chains.  One can show that \( \tau \) is the finest topology on Dirac chains satisfying three properties\footnote{It is this universal property which led to the name ``differential chain''.} \cite{topological}:  \( \t \) is coarser than the mass norm topology on \( \A_k(U) \),   bounded operators are continuous,  and a geometric operator ``prederivative'' (\S\ref{sub:prederivatve_wrt}), whose dual is Lie derivative, is bounded.  From these simple axioms follow many properties. For example, the boundary operator is continuous.    

Another useful property is that Dirac and polyhedral chains are dense in \( \tau \)  (Theorem \ref{thm:cellsmore}). As a pleasant consequence, we can build everything from a starting point of Dirac chains, as opposed to polyhedral chains, and without reference to differential forms.  This way, our definitions and proofs are often much simpler than their classical counterparts for smooth domains.   An example is the perpendicular complement operator \( \perp \) (\S\ref{sub:perp}) whose dual is Hodge star.  The definition of \( \perp \) using Dirac chains (Definition \ref{perp}) is trivial, while its definition using polyhedral chains \cite{hodge} is not. 
          
The topological dual of \( \hB_k(U) \) is topologically isomorphic to the Fr\'echet space \( \B_k(U) \) of differential \( k \)-forms satisfying that each derivative of each coefficient function is uniformly bounded over points in \( U \). (See Theorem \ref{thm:derham} below and \cite{harrison2} for the first proof which used polyhedral chains.)  The \( 0 \)-forms give differential chains a module structure, making possible partitions of unity for differential chains Theorem \ref{thm:pou}.    Other geometric operators on differential chains have explicit definitions using limits of Dirac chains, but are more familiar in their dual versions on differential forms, e.g.,  wedge and interior products of a \( k \)-vector field \( X \) with a differential form.  The ``predual'' versions provide similar multiplication\footnote{Differential chains do not form an algebra, but they are a coalgebra and coproduct dualizes to wedge product. A preprint is in preparation.} and division of a \( k \)-vector field with a differential chain.  These are ``extrusion'' \( E_X \) (\S\ref{ssub:extrusion}) and ``retraction'' \( E_X^\dagger \) (\S\ref{sub:retraction}). 
There operators are used to extend Stokes \ref{thm:stokes}, Gauss--Green \ref{thm:div}, and Kelvin--Stokes \ref{thm:curl} to differential chains.
Using a new geometrically defined Laplace operator on differential chains, paired with the classical Laplace operator on differential forms, we obtain a new higher order divergence theorem \ref{cor:lapl} which equates net flux of the ``orthogonal differentials'' of a \( k \)-vector field across an oriented boundary with net interior ``higher order divergence''.

In \S\ref{sec:fundamental_theorems_of_calculus_for_chains_in_a_flow} we prove two new fundamental theorems for differential chains in a flow:  Given a differential \( k \)-chain \( J \) and the flow \( \phi_t \) of a smooth vector field \( V \), we define a single differential \( k \)-chain \( [J_t]_a^b \) satisfying \( \p([J_t]_a^b) = [\p J_t]_a^b \) (see Figure  \ref{fig:MovingChains}).  Denoting \( J_t:={\phi_t}_* J \), we prove Theorem \ref{thm:Lieder}: \[ \cint_{J_b} \o - \cint_{J_a} \o = \cint_{[J_t]_a^b} L_V \o    \text{  (Fundamental theorem for flowing chains)}\] and Corollary \ref{cor:exactL}:  \[ \cint_{[J_t]_a^b} d L_V \o = \cint_{\p J_b} \o - \cint_{\p J_a} \o \text{  (Stokes' theorem for flowing chains,)} \] given a form \( \o \) of matching dimension. 

This is followed by a generalization of the Leibniz rule (Theorem \ref{thm:Leibniz}) from which we obtain an extension of the classical Differentiation of the Integral to differential chains (Theorem \ref{cor:differintegral}).  An immediate consequence of Corollary \ref{cor:differintegral} and Cartan's magic formula for differential chains (Theorem \ref{thm:Gpush}) is a generalization of Reynold's transport theorem Corollary \ref{cor:Reynolds}.  

\subsection{Dual pairs and the Mackey topology} \label{sub:history}

 A \emph{\textbf{dual pair}} \( (X,Y) = (X,Y, \langle , \rangle) \) is a pair of vector spaces \( (X,Y) \),  together with a bilinear form \( \<\cdot,\cdot\>: X \times Y \to \R\) called the \emph{\textbf{evaluation form}}.   A \emph{\textbf{dual topology}} on \( X \) is a locally convex topology \( \tau \) so that the continuous dual  \(  (X, \tau)' \) is isomorphic to \( Y \).  The Mackey–-Arens theorem characterizes all possible dual topologies on a locally convex space. At one extreme, the coarsest dual topology is the weak topology, and at the other is the finest dual topology called the \emph{\textbf{Mackey topology}}.   The Mackey topology on Dirac chains \( \A \) in the dual pair \( (\A,\B) \) contains representatives of surfaces with triple junctions and non-orientable surfaces.    Moreover,  both types of surfaces can be cycles. Compactness is an easy consequence of our explicit version of the Mackey topology on Dirac chains.   The tightness of the Mackey topology minimizes pathologies and makes it possible to prove soap film regularity.  
		
The Mackey topology is canonical and beautiful in its abstract conception, but for practical applications it is necessary to find some explicit formulation of it for a given dual pair. For example, the space of polyhedral \( k \)-chains can be paired with any space of continuous differential \( k \)-forms via integration, yielding a dual pair.   The Mackey topology on polyhedral \( k \)-chains paired with bounded differential \( k \)-forms turns out to be the mass norm. When paired with Lipschitz forms, the Mackey topology on polyhedral chains is Whitney's sharp norm, originally called the ``tight norm'' topology in his 1950 ICM lecture \cite{whitneyicm}.  However, the boundary operator is not continuous in the sharp norm topology, which is something Whitney needed for his study of sphere bundles\footnote{According to his own account in \cite{whitneytopology}, Whitney wanted to solve a foundational question about sphere bundles and this is why he developed Geometric Integration Theory.  Steenrod \cite{steenrod} solved the problem first, though, and Whitney stopped working on GIT (\cite{whitneytopology}). However, Whitney's then-student Eells was interested in other applications of GIT in analysis, and for a time he thought he had a workaround (\cite{whitney}, \cite{eells}), but the lack of a continuous boundary operator halted progress.}. He then defined the flat norm which does have a continuous boundary operator, and his student Wolfe \cite{wolfethesis} identified the topological dual to be that of flat chains. The flat norm has found important applications in geometric measure theory but the scope of applications is limited since the Hodge star of a flat form is not itself flat, and coordinate functions of flat forms are not flat.  There is no flat divergence theorem and there is no sharp Stokes' theorem.
  
Given these limitations, it became clear that a new dual pair \( (X,Y, \langle , \rangle) \) was needed where \( Y \) is a topological vector space of differential forms, and \( X \) is the space of Dirac chains \( \A \)( or polyhedral chains \( \P \)).  The first problem was to find a useful space of differential forms for which the Mackey topology \( \t \) on \( \A \)  could be made explicit.  Moreover the duals of all of the major operators of differential forms necessary for a full calculus, e.g, exterior derivative, Lie derivative, pullback, wedge product, and interior product,   should restrict to closed, geometrically defined operators on \( X \subset Y' \), not just boundary and Hodge star. Furthermore, \( X \), endowed with the Mackey topology, should be Hausdorff, separable, and complete.   Differential chains accomplish these goals.    
                                               
Acknowledgements: I am grateful to Morris Hirsch for his support of this work since its inception.  His helpful remarks and historical insights have been of great benefit as the theory evolved from a few geometrically appealing, but ad hoc, and difficult constructions on polyhedral chains to its current state with its  universal property \cite{topological},  its algebraically concise constructions on Dirac chains, and its applications to Plateau's problem \cite{plateau10}.  I also am very happy to thank James Yorke who has contributed to useful discussions throughout the development of this work, while Robert Kotiuga, Alain Bossavit,  Alan Weinstein, and Steven Krantz are thanked for their early interest and support.  Harrison Pugh's contributions have been significant starting with his Harvard thesis \cite{thesis}.  Our joint paper \cite{topological} is fundamental and necessary for results of this paper to hold in the inductive limit, and gives the entire theory a solid mathematical footing in the classical setting of topological vector spaces.    Finally, I am indebted to the anonymous reviewer for providing numerous helpful comments.  
             
\section{Differential chains of class \( \B^r \)} 
\label{sub:top}

Let \( U \) be an open subset\footnote{An extension of the theory to open subsets of a Riemannian  manifold \( M \) is in progress.  See \S\ref{sec:differential_chains_in_manifolds} for the main ideas.} of \( \R^n \). For \( 0\le k\le n \), define a ``Dirac k-chain''\footnote{Previously called  a ``pointed \( k \)-chain''. We prefer not to use the term ``Dirac current'' since it has various definitions in the literature, and the whole point of this work is to define calculus starting with chains, setting aside the larger space of currents.} in \( U \) to be a finitely supported section of the \( k \)-th exterior power of the tangent bundle of \(U\).       

\begin{defn}   
	\label{diracchains}
	For \( U \) open in \( \R^n \), let \( \A_k(U) \) denote the vector space of finitely supported functions \( U \to \L^k(\R^n) \) where \( \L_k(\R^n) \) is the \( k \)-power of the exterior algebra. We call \( \A_k(U) \)  the space of \emph{\textbf{Dirac \( k \)-chains in \( U \)}}.
\end{defn}
			We write an element \( A \) of \( \A_k(U) \) using formal sum notation \( A=\sum (p_i; \a_i) \) where \( p_i \in U \) and \( \a_i \in \L^k(\R^n) \). We call \( (p;\a) \) a \( k \)-\emph{\textbf{element in \( U \)}} if \( \a \in \L^k(\R^n) \) and \( p \in U \). If \( \a \) is simple, then \( (p;\a) \) is a \emph{\textbf{simple \( k \)-element}} in \( U \).  
		 The \emph{\textbf{support}} of a Dirac chain \(  \sum_{i=1}^s (p_i; \a_i)   \) is defined as \( \supp(\sum_{i=1}^s (p_i; \a_i) ) :=  \cup_{i=1}^s p_i \).  We say that a Dirac chain \( A \) is \emph{\textbf{supported in}} \( X \subset U \) if \( \supp(A) \subset X \). Let \( \A_k(U) \) be the vector space of Dirac \( k \)-chains supported in \( U \) and \( \B_k(U) \) the Fr\'echet space of differential \( k \)-forms defined on \(U \subseteq \R^n \) each with uniform bounds on each of its directional derivatives.  Then \( (\A_k(U), \B_k(U)) \) is a dual pair.     In this paper,  we define a family of norms \( \|\cdot\|_{B^r} \) on \( \A_k(U) \) so that the resulting Banach spaces\footnote{\( \hB_k^1 \) is isomorphic to the sharp space of polyhedral chains \cite{whitney}.}  \( \hB_k^r(U) := \overline{ (\A_k(U), \|\cdot\|_{B^r})} \)  form a direct system.    The inductive limit \( \hB_k(U) := \varinjlim \hB_k^r(U) \) is endowed with the inductive limit topology, which turns out to be the Mackey topology \( \t_k(\A_k(U), \B_k(U)) \) on \( \A_k(U) \) (\cite{topological}).   We prove that \( \hB_k(U) \)  a well-defined Hausdorff locally convex topological vector space, indeed a \( (DF) \)-space (\cite{AG}, \cite{topological}).  Note that \(  (\hB_k(U))' =   \B_k(U) \), by definition of the Mackey topology. We call elements of elements of  \( \hB_k(U) \) ``differential \( k \)-chains of class \( B \) in \( U \).''           
 
  We next present our explicit definition of the Mackey topology \( \t_k \) on \( \A_k(U) \) with topological dual \( \B_k(U) \).   The construction begins with a decreasing family of norms on \( \A_k(U) \), starting with the mass norm:
             
\subsection{Mass norm} 
\label{sub:mass_norm}

\begin{defn}\label{def:mass}
	An inner product \( \<\cdot,\cdot\> \) on \( \R^n \) determines the mass norm on \( \L^k(U) \) as follows: Let \[ \<u_1 \wedge \cdots \wedge u_k, v_1 \wedge \cdots \wedge v_k \> := \det(\<u_i,v_j\>). \]  (We sometimes use the notation \( \<\a,\b\>_\wedge = \<\a,\b\> \).) 	                                       The \emph{\textbf{mass}} of a simple \( k \)-vector \( \a = v_1 \wedge \cdots \wedge v_k \) is defined by \( \|\a\| := \sqrt{\<\a,\a\>} \). The \emph{\textbf{mass}} of a \( k \)-vector \( \a \) is \( \|\a\| := \inf\left\{\sum_{j=1}^{N} \|(\a_i)\| : \a_i \mbox{ are simple, } \a = \sum_{j=1}^N \a_i \right\}. \) Define the \emph{\textbf{mass}} of a \( k \)-element \( (p;\a) \) by \(\|(p;\a)\|_{B^0} := \|\a\| \).
   The \emph{\textbf{mass}} of a Dirac \( k \)-chain \( A = \sum_{i=1}^m (p_i; \a_i) \in \A_k(U) \) is given by \[ \|A\|_{B^0} := \sum_{i=1}^m \|(p_i; \a_i)\|_{B^0}. \]
\end{defn}  Another choice of inner product leads to an equivalent mass norm.
   
\subsection{Difference chains} 
\label{sub:difference_chains}

\begin{defn}\label{def:morenotation}
	 Given \( u\in \R^n \) and a \( k \)-element \( (p;\a)\in \A_k(\R^n) \), let \( T_u(p;\a) := (p+u;\a) \) be \emph{\textbf{translation}} through \( u \), and \( \D_u(p;\a) := (T_u -I)(p; \a) \). Extend both operators linearly to \( \A_k(\R^n) \). If \( u_1,\dots,u_j\in \R^n \), define the \emph{\textbf{\( j \)-difference \( k \)-chain}} \( \D_{\{u_1,\dots,u_j\}}(p; \a) := \D_{u_j}\circ\dots\circ\D_{u_1}(p; \a) \). The operators \( \D_{u_i} \) and \( \D_{u_k} \) commute, so \( \D_{\{u_1,\dots,u_j\}} \) depends only on the set \( \{u_1,\dots , u_j\} \) and not on the ordering. We call \( j \) the \emph{\textbf{order}}
	  of \( \D_{\{u_1,\dots,u_j\}} \).  We say \( \D_{\{u_1,\dots,u_j\}} (p;\a) \) is \emph{\textbf{inside}} \( U \) if the convex hull of \( \supp(\D_{\{u_1,\dots,u_j\}} (p;\a))  \) is a subset of \( U \).
\end{defn}   
To simplify our next definition, let \( \D_{\emptyset} (p;\a) := (p;\a) \), let \( s^j:=\{u_1,\dots,u_j\} \) denote a set of \( j \) vectors in \( \R^n \), and let \( s^0=\emptyset \). Let \( \|s^j\|=\|u_1\|\cdots\|u_j\| \), let \( \|s^0\|=1 \), and let \( |\D_{s^j}(p;\a)|_{B^j}=\|s^j\| \|\a\| \).               

\subsection{Norms on Dirac chains} 
\label{sub:norms_on_dirac_chains}
                                                                           
\begin{defn}\label{def:norms} 
	For \( A \in \A_k(U)\) and \( r \ge 0 \), define the norm \[ \|A\|_{B^r} = \|A\|_{B^{r,U}} := \inf \left \{ \sum_{i=0}^m \|\s_i^{j(i)}\|\|\a_i\|\,:\, A = \sum_{i=0}^m \D_{\s_i^{j(i)}}(p_i;\a_i) ,\,\, 0\le j(i)\le r \right\}\] where \( p_i \in U \), \( \a_i \in \L_k(\R^n) \),  \( m \) is arbitrary, and \( \D_{\s_i^{j(i)}}(p_i;\a_i) \) is inside \( U \).  That is, we are taking the infimum over all ways to write \( A \) as a finite sum of difference chains, of order \( \le r \), each inside \( U \).          
\end{defn} 
               
\begin{remarks} \mbox{} 
	\begin{itemize} 
		\item Although it is clear that \( \|\cdot\|_{B^r} \) is a semi-norm,  it is not obvious that it is a norm on \( \A_k(\R^n)\); this is proved in Theorem \ref{thm:normspace}. 
		\item The  \( B^r \) norms decrease as \( r \) increases, and \( \|A\|_{B^r} < \i \) for all \( A \in \A_k(\R^n)\) since \( \|A\|_{B^r} \le \|A\|_{B^0} < \i \).        
		\item It is not important to know the actual value of \( \|A\|_{B^r} \) for a given chain \( A \).  Well-behaved upper bounds suffice in our proofs and examples, and these are not usually hard to find.  The norms depend superficially on the choice of inner product since different choices lead to comparable norms, and thus to topologically isomorphic spaces. 
		\item Many of the results in this paper hold for \( r = 1 \) where the forms are Lipschitz and the chains are of class \( B^1 \).  Some might be most interested in \( 0 \le r \le 1 \) since it takes so little to define the norms,  and both smooth and rough chains can be found, while others might find the inductive limit more useful because of its operator algebra.  
	\end{itemize} 
\end{remarks}  

\section{Differential forms of class \( B^r \)} 
\label{sub:differential_forms_of_class}

Elements \( \o \in (\A_k(U))^* \) act on Dirac chains.  In this paper we assume that \( \o \) is bounded, and we call it a \emph{\textbf{differential form}}.  Let \( \|\o\|_{B^0} \) denote the sup norm of \( \o \).  

\begin{defn}\label{formsupport}
The \emph{\textbf{support}} of a differential form \( \o \) is given by \[ \supp(\o) := \overline{\{p\in U \,:\,  \o(p;\a) \ne 0 \mbox{ for some } \a \in \L_k \}}. \]	
\end{defn} 
                                                       
\begin{defn}\label{eq:Bj} 
Let \( |\o|_{B^0} := \|\o\|_{B^0} \). For \( j \ge 1 \), define \[   |\o|_{B^j} =  |\o|_{B^{j,U}} := \sup \{ |\o(\D_{\s^j}(p;\a))| \,:\,  \|\s\|\|\a\| = 1,   \D_{\s^j}(p;\a) \mbox{ is inside }  U  \}, \] 
and, for \( r \ge 0 \), define \[ \|\o\|_{B^r} = \|\o\|_{B^{r,U}} := \max\{|\o|_{B^{0,U}}, \dots, |\o|_{B^{r,U}} \}.  \]
\end{defn}    

We always denote differential forms by lower case Greek letters such as \( \o, \eta \) and differential chains by upper case Roman letters such as \( J, K \), so there is no confusion when we write \( \|\o\|_{B^{r}} \) or \( \|J\|_{B^{r}} \).   

We say that \( \o \in (\A_k(U))^* \) is a differential form of \emph{\textbf{class}} \( B^r \) if \( \|\o\|_{B^r} < \i \).   Denote the space of differential \( k \)-forms of class \( B^{r} \) by \( \B_k^r(U) \). Then \( \|\cdot\|_{B^{r}} \) is a norm on \( \B_k^r(U) \). (It is straightforward to see that \( |\o|_{B^{j}} \) satisfies the triangle inequality and homogeneity for each \( j \ge 0 \).  If \( \o \ne 0 \), there exists \( (p;\a) \) such that \( \o(p;\a) \ne 0 \). Therefore \( |\o|_{B^0} > 0 \), and hence \( \|\o\|_{B^{r}} > 0 \).) In \S\ref{sub:isomorphism_theorem} we show that \( \B_k^r(U) \) is topologically isomorphic to \( (\A_k(U), \|\cdot\|_{B^{r}})' \).  

\begin{lem}\label{lem:ineq} 
	Let \( A \in \A_k(U)\) and \( \o \in (\A_k(U))^* \) a differential \( k \)-form. Then \[ |\o(A)| \le \|\o\|_{B^{r}} \|A\|_{B^{r}}  \]for all \( r \ge 0 \).
\end{lem}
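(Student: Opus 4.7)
The plan is to estimate $|\o(A)|$ by passing through an arbitrary representation of $A$ as a sum of $j$-difference chains and then invoking the definition of $|\o|_{B^j}$ term by term.

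First I would fix an arbitrary decomposition
\[
A = \sum_{i=1}^m \D_{\s_i^{j(i)}}(p_i;\a_i), \qquad 0 \le j(i) \le r,
\]
of the kind appearing in Definition~\ref{def:norms}. Since $\o$ is linear on $\A_k(\R^n)$, the triangle inequality gives
\[
|\o(A)| \le \sum_{i=1}^m |\o(\D_{\s_i^{j(i)}}(p_i;\a_i))|.
\]

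Next I would bound each term using Definition~\ref{eq:Bj}. By homogeneity in $\s$ and $\a$ (both the difference operator $\D_{\s^j}$ and the norm $\|\s^j\|\|\a\|$ scale multilinearly), the supremum condition $\|\s\|\|\a\|=1$ in the definition of $|\o|_{B^j}$ yields
\[
|\o(\D_{\s^{j}}(p;\a))| \le |\o|_{B^{j}}\,\|\s^{j}\|\|\a\|
\]
for every $j$-difference $k$-chain, where the degenerate case $\|\s^j\|\|\a\|=0$ forces $\D_{\s^j}(p;\a)=0$ and is therefore automatic. Since $j(i)\le r$, we have $|\o|_{B^{j(i)}} \le \|\o\|_{B^r}$ by definition of $\|\o\|_{B^r}$, so each summand is dominated by $\|\o\|_{B^r}\,\|\s_i^{j(i)}\|\|\a_i\|$.

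Combining these estimates gives
\[
|\o(A)| \le \|\o\|_{B^r} \sum_{i=1}^m \|\s_i^{j(i)}\|\|\a_i\|,
\]
and taking the infimum over all admissible decompositions of $A$ yields $|\o(A)| \le \|\o\|_{B^r}\|A\|_{B^r}$, as required. The only mildly subtle point is the scaling argument used to promote the unit-norm supremum in the definition of $|\o|_{B^j}$ to a general bound; everything else is a straightforward application of linearity and the triangle inequality, so I do not anticipate any real obstacle.
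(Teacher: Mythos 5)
Your proof is correct and follows essentially the same route as the paper's: decompose $A$ as a sum of difference chains, apply the triangle inequality and the bound $|\o(\D_{\s^j}(p;\a))| \le |\o|_{B^j}\|\s^j\|\|\a\| \le \|\o\|_{B^r}\|\s^j\|\|\a\|$ termwise, and optimize over decompositions. The only cosmetic difference is that you take an infimum over all admissible decompositions while the paper selects an $\e$-near-optimal one and lets $\e \to 0$; these are equivalent.
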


\begin{proof}
	Let \( A \in \A_k(U)\) and \( \e > 0 \). By the definition of \( \|A\|_{B^{r}} \), there exist \( \s_i^{j(i)}\) with \( j(i)\in \{0,\dots, r\} \) and \( k \)-elements \( (p_i; \a_i) \), \(i = 1, \dots, m\), such that \( A = \sum_{i=1}^m \D_{\s_i^{j(i)}}(p_i;\a_i) \) and \( \|A\|_{B^{r}} > \sum_{i=1}^m \|\s_i^{j(i)}\|\|\a_i\| - \e. \) The result follows since
	\begin{align*}
 	|\o(A)| \le \sum_{i=1}^m |\o(\D_{\s_i^{j(i)}}) | \le \sum_{i=1}^m |\o|_{B^{j(i)}} \|\s_i^{j(i)}\|\|\a_i\| \le \|\o\|_{B^{r}} (\|A\|_{B^{r}} + \e).
	\end{align*}  
\end{proof}  

\subsection{Differential forms of class  $C^{r-1+Lip}$} 
\label{sub:differential_forms_of_class_}

\begin{defn}
	If \( \o \) is \( r \)-times differentiable, let \[ |\o|_{C^{j,U}} := \sup \left\{\frac{|L_{\s^j}\o(p;\a)|}{\|\s\|\|\a\|},  \D_{\s^{j}}(p;\a) \mbox{ is inside } U \right\}, 0 \le j \le r, \] and \( \|\o\|_{C^r} = \|\o\|_{C^{r,U}} := \max\left\{|\o|_{C^{0,U}}, |\o|_{C^{1,U}}, \dots, |\o|_{C^{r,U}} \right \} \), where \(L_{\s^j}\) denotes the \(j\)-th directional derivative in the directions \( u_1,\dots,u_j\). 
\end{defn} 
                                                                  
\begin{lem}\label{lem:est} 
	If \( \o \) is an exterior form with \( |\o|_{C^{r}} < \i \), then \( \left| \o(\D_{{\s^r}}(p;\a)) \right| \le \|\s\|\|\a\||\o|_{C^{r}} \) for all \( r \)-difference \( k \)-chains \( \D_{{\s^r}} (p;\a) \) where \( \a \) is simple and \( r \ge 1 \).
\end{lem}

\begin{proof} 
	By the mean value theorem, there exists \( q = p + su \) such that \[\frac{\o(p +tu;\a) - \o (p;\a)}{\|u\|} = L_u \o (q;\a). \] It follows that \( \left|\o(\D_u(p;\a)) \right| \le \|u\|\|\a\||\o|_{C^{1}} \).

	The proof proceeds by induction. Assume the result holds for \( r \) and \(|\o|_{C^{r+1}} < \i \). Suppose \( \s = \{u_1, \dots, u_{r+1} \} \), and let \( \hat{\s} = \{u_1,\dots, u_r\} \). Since \( |\o|_{C^{r+1}} < \i \), we may apply the mean value theorem again to see that
	\begin{align*} 
		\left|\o(\D_{{\s^{r+1}}}(p;\a)) \right| &= \left| ( T_{u_{r+1}}^* \o - \o)(\D_{\hat{\s}^r}(p;1)) \right| \\&\le \|u_1\| \cdots \|u_r\| |T_{u_{r+1}}^* \o - \o |_{C^{r}} \\& \le \|\s\||\o|_{C^{r+1}}. 
	\end{align*} 
\end{proof}

\begin{defn}
	Let \[ |\o|_{Lip,U} := \sup_{u \ne 0} \left\{ \frac{|\o(p+u;\a) - \o(p;\a)|}{\|u\|}\,:\, \|\a\|= 1, p(p+u) \subset U \right\}. \] If \( \o \) is \( (r-1) \)-times differentiable, let \[ |\o|_{C^{r-1+Lip},U} := \sup_{\|u_i\|=1} \{ |L_{u_{r-1}} \circ \cdots \circ L_{u_1}\o|_{Lip} \}, \] and \(  \|\o\|_{C^{r-1+Lip}} = \|\o\|_{C^{r-1+Lip},U} := \max\{|\o|_{C^0}, |\o|_{C^{1,U}}, \dots, |\o|_{C^{r-1},U}, |\o|_{C^{r-1+Lip},U} \} \).
\end{defn}

\begin{prop}\label{prop:oncemore} 
	If \( \o \in \B_k^r(U) \) is a differential \( k \)-form and \( r \ge 1 \), then \( \o \) is \( r \)-times differentiable and its \( r \)-th order directional derivatives are Lipschitz continuous with 	\( \|\o\|_{B^{r}} = \|\o\|_{C^{r-1+Lip}} \).
\end{prop}

This is a straightforward result in analysis.  Details may be found in \cite{thesis} or earlier versions of this paper posted on the arxiv.   

The differentiability class of form   \( \o \in \B_k^0(U) \) can thus be verified at a point  \( p \in U \) by taking limits in the usual way, since   \( p + tu \in U \) for all for all \( t \) sufficiently small.  If \( p \in \fr\,U \), however, more care is needed.  

An open subset \( U \subset \R^n \) is \emph{{locally convex}} if for all \( p \in \fr\,U \) and \( p = \lim a_i = \lim b_i \) where \( a_i, b_i \in U \), then \( a_ib_i \subset U \) for sufficiently large \( i \). If \( U \) is not locally convex, then there exists \( \o \in \B_k^0(U) \) which is not extendable to \( \o \in \B_k^0(\R^n) \).  It is not hard to construct examples.   Suppose \( p \in \fr\,U \) fails the test for locally convexity of \( U \).    We can take limits \( a_i \to p \) from different directions and get different answers \( \lim_{i \to \i} \o(p_i; \a) \) where \( \a \in \L_k \).  (See Figure \ref{fig:openset}.)   

\begin{figure}[ht] 
	\centering 
	\includegraphics[height=2in]{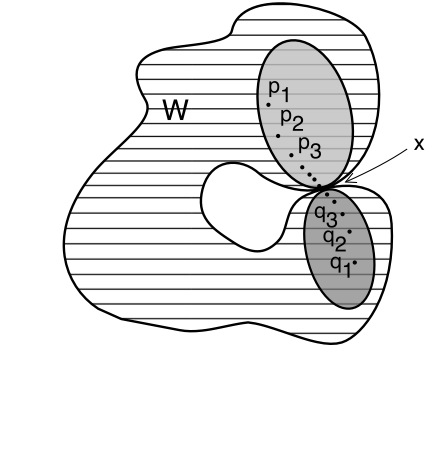}
	\caption{Distinct limit chains supported in \( x \) are separated by a form in \( \hB_0(U) \)} 
	\label{fig:openset} 
\end{figure}

\subsection{The \( B^r \) norm is indeed a norm on Dirac chains in \( U \)} 
\label{sub:the}

\begin{thm}\label{thm:normspace} 
	\( \| \cdot \|_{B^{r}} \) is a norm on Dirac chains \(\A_k(U) \) for each \( r \ge 0 \).
\end{thm}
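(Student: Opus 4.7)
The plan is to reduce to a duality statement with test forms. It is routine that $\|\cdot\|_{B^r}$ is a seminorm: homogeneity follows from rescaling any representation $A = \sum_i \D_{\s_i^{j(i)}}(p_i;\a_i)$ via $\a_i \mapsto c\a_i$, and subadditivity follows from concatenating representations of $A$ and $A'$. Thus I need only prove positive-definiteness, i.e.\ that $\|A\|_{B^r} = 0$ forces $A = 0$.

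Combining terms supported at the same point, write $A = \sum_{i=1}^s (p_i;\a_i)$ with the $p_i$ pairwise distinct. By Lemma~\ref{lem:ineq}, for every $\o \in \B_k^r(\R^n)$ we have $|\o(A)| \le \|\o\|_{B^r}\|A\|_{B^r} = 0$, so $\o(A) = 0$ identically on $\B_k^r(\R^n)$. It therefore suffices to exhibit, for each index $j$ and each linear functional $\b \in \L_k(\R^n)^* = \L^k(\R^n)$, a form $\o \in \B_k^r(\R^n)$ with
\[
\o(p_i;\a_i) = \b(\a_i)\,\d_{ij},
\]
since then $0 = \o(A) = \b(\a_j)$ for every such $\b$, forcing $\a_j = 0$ by non-degeneracy of the pairing $\L^k \times \L_k \to \R$, and hence $A = 0$.

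To construct such an $\o$, I choose a bump function $\phi_j \in C^\infty_c(\R^n)$ with $\phi_j(p_j) = 1$ and $\phi_j(p_i) = 0$ for $i \ne j$ (possible since the $p_i$ are distinct), and set $\o := \phi_j \cdot \b$, treating $\b$ as a constant $k$-covector field. All directional derivatives of $\phi_j$ are bounded of every order, so $\|\o\|_{C^{r-1+Lip}} < \infty$ (or, for $r=0$, $\|\o\|_{B^0} = \sup \|\o\| < \infty$ directly). For $r \ge 1$, Proposition~\ref{lem:oncemore} then gives $\|\o\|_{B^r} = \|\o\|_{C^{r-1+Lip}} < \infty$, so indeed $\o \in \B_k^r(\R^n)$.

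The main potential obstacle is the implicit claim that the test forms constructed this way really belong to $\B_k^r(\R^n)$ in the sense of Definition~\ref{eq:Bj}; but this is handled uniformly by Proposition~\ref{lem:oncemore} (and by hand in the $r=0$ case). Everything else is bookkeeping: separating the points with a bump function and separating the multivectors by duality.
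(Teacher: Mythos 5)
Your proof is correct and follows essentially the same route as the paper's: both reduce positive-definiteness to exhibiting a separating form of finite \( B^r \) norm via Lemma \ref{lem:ineq}, and both build that form as a compactly supported bump function times a constant covector (the paper's \( \o_I \) is your \( \phi_j\cdot\b \) with \( \b = e_I^* \), and its sign-weighted sum over multi-indices plays the role of your nondegeneracy-of-pairing argument). The one quibble is your appeal to Proposition \ref{lem:oncemore} to conclude \( \|\o\|_{B^r} < \infty \): that proposition's hypothesis is already \( \o \in \B_k^r(\R^n) \), so as stated it runs in the wrong direction here; the finiteness you need is supplied instead by Lemma \ref{est}, which bounds \( |\o|_{B^j} \) by \( |\o|_{C^j} \) for smooth forms and is exactly what the paper's proof invokes at this step.
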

                                           
\begin{proof} 
	Suppose \( A \ne 0 \) where \( A \in \A_k(U)\). It suffices to find nonzero \( \o \in (\A_k(U))^* \) with \( \o(A) \ne 0 \) and \( \|\o\|_{B^{r}} < \i \). By Lemma \ref{lem:ineq} we will then have \[ 0 < |\o(A)| \le \|\o\|_{B^{r}}\|A\|_{B^{r}} \implies \|A\|_{B^{r}} > 0. \]

	Now \( \supp(A) = \{p_0, \dots, p_N \} \). First assume that \( A(p_0) = e_I \) for some multi-index \( I \). Choose a smooth function \( f \) with compact support that is nonzero at \( p_0 \) and vanishes at \( p_i, 1 \le i \le N \). Let \[ \o_I(p; e_J) =\begin{cases} f(p), & J = I \\ 0, &J \ne I \end{cases} \] and extend to Dirac chains \( \A_k(U)\) by linearity.

	Then \( \o_I(A) = \o_I(p_0; e_I) = 1 \ne 0 \). Since \( f \) has compact support, \( \|f\|_{C^{r}} < \i \). We use this to deduce \( \|\o_I\|_{B^{r}} < \i \). This reduces to showing \( |\o_I|_{B^j} < \i \) for each \( 0 \le j \le r \). This, in turn,  reduces to showing \( |\o_I(\D_{\s^j}(p;e_I))| \le \|\s\| \|f\|_{C^j} \). By Lemma \ref{lem:est} \(|\o_I(\D_{\s^j}(p;e_I))| = \left | f(\D_{\s^j}(p;1)) \right|  \le \|\s\|\|f\|_{C^j} < \i \).
	
	In general, \( A(p_0) = \sum a_I e_I \) where \( a_I \in \R \).  Then \( \o := \sum_I  sign(a_I) \o_I \) satisfies \( \|\o\|_{B^{r}} < \i \) and \[ \o(A) = \sum_I  sign(a_I)  \o_I(A) = \sum_I |a_I|  \ne 0. \]
\end{proof}

\begin{defn}    
	Let \( \hB_k^r(U) \) be the Banach space obtained upon completion of \( \A_k(U)\) with the \( B^{r} \) norm for \( r \ge 0 \) and \( 0 \le k \le n \). Elements of \( \hB_k^r(U) \) are called \emph{\textbf{\textbf{differential \( k \)-chains of class \( B^{r} \) in \( U \)}}}.  	
\end{defn}

The next result follows from Lemma \ref{lem:ineq}.

\begin{thm}\label{thm:bipair}
The bilinear pairing \( \B_k^r(U) \times \hB_k^r(U) \to \R \) is continuous and satisfies \( |\o(J)| \le \|\o\|_{B^{r}}\|J\|_{B^{r}} \) for all \( \o \in \B_k^r(U) \), \( J \in \hB_k^r(U) \) and \( r \ge 0 \).
\end{thm}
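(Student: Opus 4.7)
The pairing is already defined on the dense subspace $\A_k(\R^n) \subset \hB_k^r(\R^n)$ through the evaluation $(\o, A) \mapsto \o(A)$, and Lemma \ref{lem:ineq} already provides the desired inequality on that subspace. The plan is therefore to extend this pairing by continuity from $\A_k(\R^n)$ to its completion $\hB_k^r(\R^n)$, verify that the inequality persists under the extension, and then read off separate continuity from the inequality.

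First, fix $\o \in \B_k^r(\R^n)$ with $\|\o\|_{B^r} < \infty$. The linear functional $A \mapsto \o(A)$ on $\A_k(\R^n)$ satisfies $|\o(A)| \le \|\o\|_{B^r} \|A\|_{B^r}$ by Lemma \ref{lem:ineq}, hence is bounded (in particular uniformly continuous) with respect to the $B^r$ norm. By the standard extension theorem for bounded linear maps from a normed space to $\R$ (the target is complete), it admits a unique continuous linear extension to the completion $\hB_k^r(\R^n)$, which I still denote $J \mapsto \o(J)$. Concretely: for any Cauchy sequence $A_i \to J$ in $\hB_k^r(\R^n)$ with $A_i \in \A_k(\R^n)$, the sequence $\o(A_i)$ is Cauchy in $\R$, and its limit depends only on $J$, not on the choice of approximating sequence. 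Passing to the limit in $|\o(A_i)| \le \|\o\|_{B^r}\|A_i\|_{B^r}$ yields
\[
|\o(J)| \le \|\o\|_{B^r}\, \|J\|_{B^r}
\]
for every $\o \in \B_k^r(\R^n)$ and $J \in \hB_k^r(\R^n)$.

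Separate continuity now follows from this inequality. For fixed $\o$, the map $J \mapsto \o(J)$ is linear with operator norm at most $\|\o\|_{B^r}$, so it is continuous on $\hB_k^r(\R^n)$. For fixed $J$, the map $\o \mapsto \o(J)$ is linear with operator norm at most $\|J\|_{B^r}$, so it is continuous on $\B_k^r(\R^n)$.

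The only real subtlety is the well-definedness of the extension: one must check that the pairing defined by taking limits does not depend on the chosen approximating sequence of Dirac chains. This is purely formal from the Cauchy estimate $|\o(A_i) - \o(A_j)| \le \|\o\|_{B^r} \|A_i - A_j\|_{B^r}$, but it is the one step that requires the bound of Lemma \ref{lem:ineq} to hold \emph{uniformly} in the approximating chains rather than pointwise. No further obstacle is expected, since no compatibility with the other operators (boundary, pushforward, etc.) is being asserted here.
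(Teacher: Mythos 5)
Your proof is correct and is exactly the argument the paper intends: the paper gives no explicit proof beyond the remark that the theorem ``follows from Lemma \ref{lem:ineq},'' and the density-plus-bounded-extension argument you spell out is the standard way to fill that in. The persistence of the inequality under the limit and the resulting separate continuity are handled correctly.
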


\subsection{Inclusion maps} 
\label{sub:inclusion_maps}
        
\begin{defn}
	Suppose \( U \subseteq U' \subseteq \R^n \), \( 0 \le k \le n \), and \( r \ge 0 \).
	Define \( \psi_k^r: \A_k(U) \hookrightarrow \A_k(U') \) by \( \psi_k^r := Id \).  
\end{defn}

\begin{thm}\label{thm:psi}
 The linear map \( \psi_k^r \) extends to a continuous linear map \( \psi_k^r:\hB_k^r(U) \to \hB_k^r(U') \) for each \( r \ge 0 \) and \( 0 \le k \le n \) and satisfies \( \|J\|_{B^{r,U'}} \le \|J\|_{B^{r,U}} \) for all \( J \in \hB_k^r(U) \).   For each \( k \ge 0 \), there exists a continuous linear map \( \psi_k: \hB_k(U) \to \hB_k(U') \) with \(  \psi_k \circ u_k^r = \psi_k^r \) for all \( r \ge 0 \).   
\end{thm}

\begin{proof}   
	 If \( A \in \A_k(U) \), then \( \|A\|_{B^{r,U'}} \le \|A\|_{B^{r,U}} \) for all \( r \ge 0 \) since the definition of \( \|A\|_{B^r, U'} \) considers more difference chains than does that of \( \|A\|_{B^r, U} \).
	The inequality follows by density of \( \A_k(U) \) in \( \hB_k^r(U) \).
  The extension to the inductive limit follows since the hull condition holds: \( \psi_k^r(H_k(U)) \subset H_k(U') \). We can therefore apply Theorem \ref{thm:continuousoperators}.   
\end{proof}

Example \ref{notinjective} shows that these maps are not generally injective. 

\begin{example} \label{notinjective} In Figure \ref{fig:openset} there are two sequences of points \( p_i, q_i \) converging to \( x \) in \( \R^2 \). We know  both \( (p_i;1) \to (x;1) \) and \( (q_i;1) \to (x;1) \) in \( \hB_0^1(\R^2) \). Each of the sequences \( \{(p_i;1)\} \) and \( \{(q_i;1)\} \) is Cauchy in \( \hB_0^1(U) \) since  the intervals connecting \( p_i \) and \( p_j \), and those connecting \( q_i \) and \( q_j \) are subsets of \( U \). Therefore \( (p_i;1) \to A_p \) and \( (q_i;1) \to A_q \) in \(  \hB_0^1(U) \). However,  the interval connecting \( p_i \) and \( q_i \) is \emph{not}  a subset of \( U \). The chains \( A_p \ne A_q \) in \( \hB_0^1(U) \) since they are separated by elements of the dual Banach space \( \B_0^1(U) \).  For example, let \( \o_1 \) be a smooth form defined on \( U \) which is identically one in the lighter shaded elliptical region, and identically zero in the darker shaded region.  Then \( \cint_{A_p} \o_1  = \lim_{i\to \i} \o_1(p_i;1) = 1 \) and \( \cint_{A_q} \o_1  = \lim_{i \to \i}\o_1(q_i;1) = 0 \). Since \(\psi_0^1(A_p) =\psi_0^1(A_q) = (x;1) \) it follows that \( \psi_0^1 \) is not injective.   
\end{example}  

\begin{defn}\label{frontier}
	The \emph{\textbf{frontier}} of an open set \( U \) is defined by \( \fr(U):= \overline{U} \backslash U \).    
\end{defn}
We shall reserve the word ``boundary'' for the operator on differential chains developed in \S \ref{sub:boundary_operator}, as well as for the classical boundary of a submanifold, both of which have orientation.  Frontier is defined for sets, with no algebraic properties.  Thus when we speak of the boundary of an open set, we are thinking of \( U \) as a submanifold, and when we speak of its frontier, we are thinking of \( U \) as a set. 
                   
\begin{thm}\label{thm:injectionsopen}
Suppose \( U \subset U' \subset \R^n \). The  linear maps \( \psi_k^r:\hB_k^r(U) \to \hB_k^r(U') \) and \( \psi_k:\hB_k(U) \to \hB_k(U') \) are injections if the frontier of \( U \) is embedded in \( U' \), or on restriction to the subspace of chains supported in \( U \).  
\end{thm}

\begin{proof}  
	Suppose \( J \in \hB_k^r(U) \) satisfies \( \psi_k^r(J) = 0 \).     It suffices to show that \( \int_J \o = 0 \) for all \( \o \in \B_k^r(U) \), for this will show \( J = 0 \).  Since the frontier of \( U \) is smoothly embedded in \( U' \), each \( \o \in \B_k^r(U) \)  extends to a differential form \( \eta \in \B_k^r(U') \), and hence \( \cint_{\psi_k^r(J)} \eta = 0 \).  Let \( A_i \to J \) in \( \hB_k^r(U) \).  Then \( \psi_k^r(A_i) \to \psi_k^r(J) \) by Theorem \ref{thm:psi}.   Since \( \psi_k^r \) is the identity on Dirac chains,  \(  \cint_J \o = \lim_{i \to \i} \cint_{A_i} \o = \lim_{i \to \i} \cint_{\psi_k^r A_i} \eta  = \cint_{\psi_k^r(J)} \eta = 0 \).  
\end{proof} 

  The \emph{\textbf{restriction}} of Dirac chains \( \A_k(U') \to \A_k(U) \) does not extend to a continuous restriction of differential chains.   Let \( U \) be the open set \( \R^2 \) less the nonnegative \( x \)-axis.  The series \( \sum_{n=1}^\i((n, 1/n^2);1) + ((n,-1/n^2);-1) \) of Dirac \( 0 \)-chains converges in \( \hB_0^1(\R^2) \), but not in \( \hB_0^1(U) \).  The problem is that the difference chains needed to prove convergence in \( \hB_0^1(\R^2) \) are not inside \( U \).    

\subsection{Characterizations of the \( B^r \) norms} 
\label{sub:characterizations_of_the_B}

\begin{lem}\label{lem:trnsl} 
	\( \|\D_v A\|_{B^{r+1}} \le \|v\|\|A\|_{B^{r}} \) for all  \( A \in \A_k(U)\), \( v \in \R^n \), and \( r \ge 0 \). 
\end{lem}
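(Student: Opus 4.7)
The plan is to unwind the infimum defining $\|A\|_{B^r}$ and apply the operator $\Delta_v$ termwise, using the recursive definition $\Delta_{v \circ \sigma^j}(p;\alpha) := (T_v - \mathrm{Id})(\Delta_{\sigma^j}(p;\alpha))$ to promote each $j$-difference in the decomposition of $A$ into a $(j+1)$-difference in a decomposition of $\Delta_v A$.

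Concretely, I would fix $\varepsilon > 0$ and pick a decomposition $A = \sum_{i=1}^m \Delta_{\sigma_i^{j(i)}}(p_i;\alpha_i)$ with $0 \le j(i) \le r$ and $\sum_{i=1}^m \|\sigma_i^{j(i)}\|\|\alpha_i\| < \|A\|_{B^r} + \varepsilon$. Since $\Delta_v = T_v - I$ is linear on $\mathcal{A}_k(\R^n)$, I can distribute it over the sum. By the recursive definition of $\Delta_{v \circ \sigma^{j(i)}}$, this gives
\[
\Delta_v A \;=\; \sum_{i=1}^m \Delta_{v \circ \sigma_i^{j(i)}}(p_i;\alpha_i),
\]
which is an expression of $\Delta_v A$ as a sum of difference chains of order $j(i)+1$, each lying in the allowed range $1 \le j(i)+1 \le r+1$. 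Hence this decomposition is admissible in the infimum defining $\|\cdot\|_{B^{r+1}}$.

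Using $\|v \circ \sigma_i^{j(i)}\| = \|v\|\|\sigma_i^{j(i)}\|$ from Definition \ref{def:morenotation}, the associated cost is
\[
\sum_{i=1}^m \|v \circ \sigma_i^{j(i)}\|\|\alpha_i\| \;=\; \|v\| \sum_{i=1}^m \|\sigma_i^{j(i)}\|\|\alpha_i\| \;<\; \|v\|\bigl(\|A\|_{B^r} + \varepsilon\bigr),
\]
so $\|\Delta_v A\|_{B^{r+1}} \le \|v\|(\|A\|_{B^r} + \varepsilon)$. Letting $\varepsilon \to 0$ finishes the proof.

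There is no real obstacle here: everything reduces to the recursive definition of higher-order differences and the multiplicativity of the norm on symmetric powers. The only thing to be careful about is that when $j(i) = r$, the new term $\Delta_{v \circ \sigma_i^r}$ is a genuine $(r+1)$-difference, so we really do need the $B^{r+1}$ norm on the left-hand side, not the $B^r$ norm — which is consistent with the statement.
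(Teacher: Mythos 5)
Your proposal is correct and follows essentially the same route as the paper: both unwind a near-optimal decomposition of $A$ into difference chains of order at most $r$, observe that applying $\D_v$ promotes each $\D_{\s_i^{j(i)}}$ to the $(j(i)+1)$-difference chain $\D_{v\circ\s_i^{j(i)}}$ admissible in the $B^{r+1}$ infimum, and use $\|v\circ\s\|=\|v\|\|\s\|$ to bound the cost. The paper phrases the last step via the triangle inequality and the bound $\|\cdot\|_{B^{r+1}}\le|\cdot|_{B^{j+1}}$, but this is the same estimate you wrote out explicitly.
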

                                 
\begin{proof} 
	Since Dirac chains \( \A_k(U)\) are dense in \( \hB_k^r(U) \), it suffices to prove this for \( A \in \A_k(U)\). Let \( \e > 0 \). We can write \( A = \sum_{i=1}^m \D_{\s_i^{j(i)}}(p_i; \a_i) \) as in the proof of Lemma \ref{lem:ineq}, such that \[ \|A\|_{B^{r}} > \sum_{i=1}^m \|\s_i^{j(i)}\|\|\a_i\| - \e.\] Then
	\begin{align*} 
		\|\D_v A\|_{B^{r+1}}= \|T_v A - A\|_{B^{r+1}} \le \sum_{i=1}^m \|T_v \D_{\s_i^{j(i)}} - \D_{\s_i^{j(i)}} \|_{B^{r+1}} &\le \sum_{i=1}^m | T_v \D_{\s_i^{j(i)}} - \D_{\s_i^{j(i)}} |_{B^{j+1}}   \\
		&\le \|v\|\sum_{i=1}^m \|\s_i^{j(i)}\|\|\a_i\| \\
		&< \|v\|(\|A\|_{B^{r}} + \e ). 
	\end{align*} 
\end{proof} 

\begin{lem}\label{lem:lar} 
	The norm \(\|\cdot\|_{B^{r}} \) is the largest seminorm \( |\cdot|' \) on Dirac chains \( \A_k(U)\) such that \( |\D_{\s^j}(p;\a)|' \le \|\s\|\|\a\| \) for all \( j \)-difference \( k \)-chains \( \D_{\s^j}(p;\a) \), \( 0 \le j \le r \). 
\end{lem} 
                     
\begin{proof}
	First observe that the \( B^{r} \) norm itself satisfies this inequality by its definition. On the other hand, suppose \( |\quad|' \) is a seminorm satisfying \( |\D_{\s^j}(p;\a)|' \le \|\s\|\|\a\| \). Let \( A \in \A_k(U)\) be a Dirac chain and \( \e > 0 \). We can write \( A = \sum_{i=1}^m \D_{\s_i^{j(i)}}(p_i;\a_i) \) as in the proof of Lemma \ref{lem:ineq}, with \( \|A\|_{B^{r}} > \sum_{i=1}^m \|\s_i^{j(i)}\|\|\a_i\| - \e \). Therefore, by the triangle inequality, \( |A|' \le \sum_{i=1}^m |\D_{\s_i^{j(i)}}(p_i;\a_i)|' \le \sum_{i=1}^m \|\s_i^{j(i)}\|\|\a_i\| < \|A\|_{B^{r}} + \e \). Since this estimate holds for all \( \e > 0 \), the result follows. 
\end{proof} 

\begin{thm}\label{thm:lartheorem} 
	The norm \( \|\cdot\|_{B^{r}} \) is the largest seminorm  \( |\cdot|' \) on Dirac chains \( \A_k(U)\) such that 
	\begin{itemize} 
		\item \( |A|' \le \|A\|_{B^0} \) 
		\item \( |\D_u A |' \le \|u\| \|A\|_{B^{r-1} } \). 
  \end{itemize} 
for all \( r \ge 1 \) and \( A \in \A \). 
\end{thm}

\begin{proof} 
	\( \|\cdot\|_{B^{r}} \) satisfies the first part since the \( B^{r} \) norms are decreasing on chains. It satisfies the second inequality by Lemma \ref{lem:trnsl}. On the other hand, suppose \( |\cdot|' \) is a seminorm satisfying the two conditions. For \( j=0 \), we use the first inequality \( | (p;\a)|' \le \| (p;\a) \|_{B^0} = \| \a\| \).  Fix \( 0 < j \le r \). Using induction and recalling the notation \( \hat{\s} \) from Lemma \ref{lem:est}, it follows that 
	\begin{align*} 
		|\D_{\s^j}(p;\a)|' = |\D_{\hat{\s}^{j-1},u_1} (p;\a)|' \le \|u_1\|\|\D_{\hat{\s}^{j-1}} (p;\a)\|_{B^{j-1} } \le \|\s\|\|\a\|. 
	\end{align*} 
	Therefore, the conditions of Lemma \ref{lem:lar} are met and we deduce \( |A|' \le \|A\|_{B^{r}} \) for all Dirac chains \( A \). 
\end{proof}

Let \( \A = \oplus_{k=0}^n \A_k(U)\). Recall \( \D_u A = T_u A - A \) where \( T_u (p;\a) = (p+u; \a) \).
     
\begin{cor}\label{cor:opr} 
	If \( T: \A \to \A \) is an operator satisfying \( \|T(\D_{\s^j}(p;\a))\|_{B^{r}} \le C \|\s\|\|\a\| \) for some constant \( C > 0 \) and all \( j \)-difference \( k \)-chains \( \D_{\s^j}(p;\a) \) with \( 0 \le j \le s \) and \( 0 \le k \le n \), then \( \|T(A)\|_{B^{r}} \le C\|A\|_{B^s} \) for all \( A \in \A_k(U)\).
\end{cor}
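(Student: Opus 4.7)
The plan is to reduce this immediately to Lemma \ref{lar}. The key observation is that the expression $A \mapsto \|T(A)\|_{B^r}$ is itself a seminorm on $\A$, because $T$ is linear and $\|\cdot\|_{B^r}$ is a norm on $\A_k(\R^n)$ by Theorem \ref{thm:normspace} (extended to $\A = \oplus_k \A_k(\R^n)$). Thus $|A|' := C^{-1}\|T(A)\|_{B^r}$ defines a seminorm on $\A$, and by hypothesis it satisfies
\[
|\D_{\s^j}(p;\a)|' \le \|\s\|\|\a\|
\]
for every $j$-difference $k$-chain with $0 \le j \le s$.

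First I would state this seminorm candidate $|\cdot|'$ and record the two checks: seminorm (subadditivity and homogeneity from linearity of $T$ together with the seminorm properties of $\|\cdot\|_{B^r}$), and the bound on $j$-difference chains (direct from the hypothesis, after dividing by $C$). Then I would apply Lemma \ref{lar} verbatim with $r$ replaced by $s$: it tells us that $\|\cdot\|_{B^s}$ is the \emph{largest} seminorm with this property, hence
\[
C^{-1}\|T(A)\|_{B^r} \;=\; |A|' \;\le\; \|A\|_{B^s} \qquad \text{for all } A \in \A,
\]
which is exactly the desired inequality $\|T(A)\|_{B^r} \le C\|A\|_{B^s}$.

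There is essentially no obstacle here, since the real content was already packaged into Lemma \ref{lar}; the only mild point is to notice that the hypothesis uniformly in $k$ (for $0 \le k \le n$) lets us treat $|\cdot|'$ as a single seminorm on the direct sum $\A$, so that Lemma \ref{lar} applies on each summand simultaneously and the conclusion holds for every $A \in \A_k(\R^n)$, $0 \le k \le n$.
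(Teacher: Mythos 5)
Your proof is correct and is essentially the paper's own argument: the paper likewise sets \( |A|' = \frac{1}{C}\|T(A)\|_{B^r} \), observes it is a seminorm, and invokes the maximality characterization of the \( B^s \) norm. The only (cosmetic) difference is that you cite Lemma \ref{lar} directly, which matches the hypothesis on difference chains exactly, whereas the paper routes the citation through Theorem \ref{lartheorem}.
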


\begin{proof} 
	Let \( |A|' = \frac{1}{C} \|T (A)\|_{B^{r}} \). Then \( |A|' \) is a seminorm, and the result follows by Theorem \ref{thm:lartheorem}.
\end{proof}

\subsection{Isomorphism of differential forms and differential cochains } 
\label{sub:isomorphism_theorem}
 
 In this section we show that the Banach space \( \B_k^r(U) \) of differential forms is topologically isomorphic to the Banach space \( (\hB_k^r(U))' \) of \emph{\textbf{differential cochains}}. 

\begin{thm}\label{thm:seminorm} 
Let \( r \ge 0 \).	If \( \o \in \B_k^r(U) \) is a differential \( k \)-form, then \( \|\o\|_{B^{r}} = \sup_{ 0 \ne A \in \A_k} \frac{ \left| \o(A) \right|}{\|A\|_{B^{r}}} \).
\end{thm}

\begin{proof} 
	We know \( \left|\o(A)\right| \le \|\o\|_{B^{r}} \|A\|_{B^{r}}\) by Lemma \ref{lem:ineq}. On the other hand,  
	\begin{align*} 
		|\o|_{B^{j}} = \sup_{0 \ne \D_{\s^j (p;\a)}} \frac{|\o(\D_{\s^j}(p;\a))|}{\|\s^j\|\|\a\|} \le \sup_{0 \ne \D_{\s^j (p;\a)}} \frac{|\o(\D_{\s^j}(p;\a))|}{\|\D_{\s^j}(p;\a)\|_{B^{r}}} \le \sup_{0 \ne A} \frac{ | \o(A)|}{\|A\|_{B^{r}}}.
	\end{align*} 
 	It follows that \( \|\o\|_{B^{r}} \le \sup_{0 \ne A} \frac{ |\o(A)|}{\|A\|_{B^{r}}}. \)
\end{proof} 
              
\begin{thm}\label{thm:derham} 
	The linear map \( \Psi_r: (\hB_k^r(U))' \to \B_k^r(U) \) determined by \( \Psi_r(X)(p;\a) := X(p;\a) \) is a topological isomorphism for each \( r \ge 0 \). Furthermore, \[ \|\Psi_r(X)\|_{B^{r}} = \|X\|_{B^{r}}. \]
\end{thm}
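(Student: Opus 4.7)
The plan is to establish $\Psi_r$ as an isometric (hence topological) isomorphism by verifying in turn that it is well-defined, injective, surjective, and norm-preserving. The two main inputs are Lemma \ref{lem:ineq} (which bounds $|\o(A)|$ by $\|\o\|_{B^r}\|A\|_{B^r}$) and Theorem \ref{lem:seminorm} (which characterizes $\|\o\|_{B^r}$ as the supremum of $|\o(A)|/\|A\|_{B^r}$ over $A \in \A_k$). Density of $\A_k(\R^n)$ in $\hB_k^r(\R^n)$ will be used repeatedly.

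First I would check that $\Psi_r$ is well-defined. Linearity of $X \in (\hB_k^r(\R^n))'$ on Dirac chains implies that $\a \mapsto X(p;\a)$ is linear in $\a$, so $\Psi_r(X)$ is a genuine exterior $k$-form on $\R^n$. To show it has finite $B^r$ norm, fix $j \le r$ and a $j$-difference $k$-chain $\D_{\s^j}(p;\a)$ with $\|\s\|\|\a\|=1$: by definition of $\|\cdot\|_{B^r}$ we have $\|\D_{\s^j}(p;\a)\|_{B^r} \le \|\s\|\|\a\|$, so continuity of $X$ yields
\[ |\Psi_r(X)(\D_{\s^j}(p;\a))| = |X(\D_{\s^j}(p;\a))| \le \|X\|_{B^r}, \]
giving $|\Psi_r(X)|_{B^j} \le \|X\|_{B^r}$ for each $0 \le j \le r$ and hence $\|\Psi_r(X)\|_{B^r} \le \|X\|_{B^r}$. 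Injectivity is immediate: if $\Psi_r(X) = 0$ then $X$ vanishes on all $k$-elements, hence on $\A_k(\R^n)$ by linearity, hence on $\hB_k^r(\R^n)$ by density and continuity.

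Next I would prove surjectivity constructively. Given $\o \in \B_k^r(\R^n)$, define $X_\o : \A_k(\R^n) \to \R$ by $X_\o(\sum (p_i;\a_i)) := \sum \o(p_i;\a_i)$; this is linear, and Lemma \ref{lem:ineq} gives $|X_\o(A)| \le \|\o\|_{B^r}\|A\|_{B^r}$, so $X_\o$ extends uniquely by continuity to an element of $(\hB_k^r(\R^n))'$ with $\|X_\o\|_{B^r} \le \|\o\|_{B^r}$. By construction $\Psi_r(X_\o)(p;\a) = \o(p;\a)$, so $\Psi_r(X_\o) = \o$, establishing surjectivity.

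Finally I would pin down the norm identity. The direction $\|\Psi_r(X)\|_{B^r} \le \|X\|_{B^r}$ is already in hand. For the reverse, by density of $\A_k(\R^n)$ in $\hB_k^r(\R^n)$, the operator norm of $X$ equals $\sup_{0 \ne A \in \A_k} |X(A)|/\|A\|_{B^r}$, which by Theorem \ref{lem:seminorm} equals $\|\Psi_r(X)\|_{B^r}$. Thus $\Psi_r$ is an isometric bijection between Banach spaces, which is in particular a topological isomorphism. The only conceptual subtlety is the surjectivity step, where one must confirm that the extension by continuity of $X_\o$ really recovers $\o$ on $k$-elements; since the recipe is literally $X_\o = \o$ on Dirac chains, this is automatic, and no deeper argument is required.
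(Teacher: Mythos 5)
Your proposal is correct and follows essentially the same route as the paper: the inequality $\|\Psi_r(X)\|_{B^r}\le\|X\|_{B^r}$ is obtained by testing against $j$-difference $k$-chains exactly as in the paper's first display, and your $X_\o$ is precisely the paper's inverse map $\Theta_r$, with the norm identity coming from Theorem \ref{lem:seminorm} in both cases. Your version merely separates injectivity, surjectivity, and isometry into explicit steps where the paper packages them as the statement that $\Theta_r$ is a two-sided inverse.
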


\begin{proof} 
Let \( X \in ( \hB_k^r(U))' \) be a cochain. We show \(
\|\Psi_r(X)\|_{B^{r}} < \|X\|_{B^{r}} \). Now \[ \frac{\left| \Psi_r(X)(\D_{\s^j}(p;\a))
\right|}{\|\s^j\|\|\a\|} = \frac{|X(\D_{\s^j}(p;\a))|}{\|\s^j\|\|\a\|} \le
\frac{|X(\D_{\s^j}(p;\a))|}{\|\D_{\s^j}(p;\a)\|_{B^{r}}} \le \|X\|_{B^{r}} \] Therefore, \(
|\Psi_r(X)|_{B^{j}} \le \|X\|_{B^{r}} \), and thus \( \|\Psi_r(X)\|_{B^{r}} \le \|X\|_{B^{r}} \).

 It follows that \( \Psi_r: (\hB_k^r(U))' \to \B_k^r(U) \) is a continuous linear map. We show \( \Psi_r \) is a continuous isomorphism. Let \( \o \in \B_k^r(U) \). Let \( \Theta_r:\B_k^r(U) \to (\hB_k^r(U))' \) be given by \( \Theta_r \o (A) := \o(A) \). Then \( \Theta_r \o \) is a linear functional on \( {\cal A}_k(U) \). By Theorem \ref{thm:seminorm} \[ \|\Theta_r \o\|_{B^{r}} = \sup_{A \ne 0} \frac{ |\Theta_r \o(A)| }{\|A\|_{B^{r}}} = \sup_{A \ne 0} \frac{ | \o(A)| }{\|A\|_{B^{r}}} = \|\o\|_{B^{r}}. \] Therefore, \( \Theta_r \o \in (\hB_k^r(U))' \). We conclude that \( \Psi_r \) is a topological isomorphism with inverse \( \Theta_r \). 
\end{proof} 

\begin{cor}\label{cor:han}
If \( J \in \hB_k^r(U) \) is a differential chain and \( r \ge 0 \), then \( \|J\|_{B^{r}} = \sup_{ 0 \ne \o \in \B_k^r(U)} \frac{ \left| \o(J) \right|}{\|\o\|_{B^{r}}} \).
\end{cor}
   
\subsubsection{Integration} 
\label{ssub:integration}

Although the space \(  \B_k^r(U) \) of differential forms is dual to \( \hB_k^r(U) \), and thus we are perfectly justified in using the notation \( \o(J) \), this can become confusing since we are no longer evaluating \( \o \) at a set of points, as we were with elements of \( \A_k(U)\).  Instead we shall think of \( \o(J) \) as ``integration'' over \( J \) and denote it as \( \cint_J \o := \o(J) \).   Indeed, the integral notation is justified by the approximation of \( \o(J) \) by its analogue to Riemann sums.  That is, \[ \cint_J \o = \lim_{i \to \i} \o(A_i), \] where \( A_i \in \A_k(U)\) and \( A_i \to J \) in \( \hB_k^r(U) \). In this new notation, Theorem \ref{thm:bipair}, for example, becomes 
\begin{equation}\label{basic}
	\left|\cint_J \o \right| \le \|J\|_{B^r}\|\o\|_{B^r}. 
\end{equation} 

    We shall see later in Theorems \ref{thm:opensets}, \ref{thm:cells}, \ref{thm:jX} and  \S\ref{sec:representatives_of_domains_of_integration}  that there are natural \emph{\textbf{representatives}}
     of classical domains \(A\) of integration, e.g., open sets, polyhedral chains, vector fields or submanifolds, as differential chains \( J \).  That is, the Riemann integral  \(\int_A \o \) and \( \cint_J \o\) agree.  
 
 The next result follows from Theorem \ref{thm:derham}. 
\begin{cor}\label{thm:integralpair}
   \( \cint: \hB_k^r(U) \times \B_k^r(U) \to \R \) is a nondegenerate bilinear pairing for \( r \ge 0 \).  
\end{cor}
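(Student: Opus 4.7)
The plan is to verify the two conditions defining a nondegenerate bilinear pairing: (i) $\cint$ is bilinear, and (ii) it is separating in each argument. Bilinearity is essentially automatic from the definition $\cint_J \omega := \omega(J)$, since $\omega$ acts linearly on $J$ by definition, and the space $\B_k^r(\R^n)$ is a vector space of linear functionals on $\A_k(\R^n)$ extended to $\hB_k^r(\R^n)$ via Theorem~\ref{thm:derham}, giving linearity in $\omega$ as well. So the substance is nondegeneracy.

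For nondegeneracy in the form slot, I would argue directly using Dirac elements: if $\omega \in \B_k^r(\R^n)$ is nonzero, then by definition there is some simple element $(p;\alpha) \in \A_k(\R^n)$ with $\omega(p;\alpha)\ne 0$. Since every Dirac chain embeds in $\hB_k^r(\R^n)$ (as $\A_k(\R^n)$ is dense), taking $J=(p;\alpha)$ gives $\cint_J \omega = \omega(p;\alpha) \ne 0$. This direction needs no Hahn--Banach, only the definition of $\B_k^r(\R^n)$ as a space of exterior forms acting on Dirac chains.

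For nondegeneracy in the chain slot, I would invoke Theorem~\ref{thm:derham} combined with the Hahn--Banach theorem. Suppose $J \in \hB_k^r(\R^n)$ is nonzero. Since $\hB_k^r(\R^n)$ is a Banach space (being the completion of $(\A_k(\R^n),\|\cdot\|_{B^r})$), Hahn--Banach produces a continuous linear functional $X \in (\hB_k^r(\R^n))'$ with $X(J)\ne 0$. By Theorem~\ref{thm:derham}, $\Psi_r$ is a topological isomorphism, so there exists $\omega = \Psi_r(X) \in \B_k^r(\R^n)$ whose action on $\A_k(\R^n)$ agrees with $X$. Because Dirac chains are dense in $\hB_k^r(\R^n)$ and both $X$ and $\Theta_r(\omega)$ are continuous functionals that agree on this dense subspace, they coincide on all of $\hB_k^r(\R^n)$; hence $\cint_J \omega = X(J) \ne 0$.

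The main obstacle I anticipate is purely notational rather than mathematical: one must be careful that the symbol $\omega$ sometimes denotes an element of $\B_k^r(\R^n)$ (a form on Dirac chains) and sometimes its associated continuous functional $\Theta_r(\omega)$ on $\hB_k^r(\R^n)$, and the identification of these two roles is exactly the content of Theorem~\ref{thm:derham}. Once that identification is invoked, both nondegeneracy statements reduce to one-line arguments, so the corollary is essentially a repackaging of the isomorphism theorem with Hahn--Banach applied to the Banach space $\hB_k^r(\R^n)$.
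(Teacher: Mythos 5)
Your proof is correct and follows essentially the same route as the paper, which simply derives the corollary from Theorem~\ref{thm:derham}: nondegeneracy in the chain slot is Hahn--Banach on the Banach space \( \hB_k^r(\R^n) \) transported through the isomorphism \( \Psi_r \) (this is also exactly the content of Theorem~\ref{cor:han}), and nondegeneracy in the form slot is immediate from evaluating a nonzero form on a Dirac element. Your expansion just makes explicit the details the paper leaves implicit.
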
      
        
Suppose \( U \subset U' \) are open.  The Banach space \( \B_k^r(U) \) contains all differential forms \( \B_k^r(U') \) restricted to \( U \) since difference chains inside \( U \) are also inside \( U' \).    Elements of \( \B_k^r(U) \) extend to elements of \( \B_k^r(U') \) if \( U \) is an embedded submanifold with boundary. It is an interesting problem to characterize the collection of open sets \( U \) for which elements of \( \B_k^r(U) \) extend to elements of \( \B_k^r(\R^n) \).     
   
\begin{examples}\label{ex:QQ}\mbox{}
	\begin{enumerate} 
		\item The characteristic function \( \chi_U \) is an element of \( \B_n^r(U) \) for all \( r \ge 0 \).
		\item  Let \( U \subset \R^2 \) be the open set depicted in Figure \ref{fig:openset}.  It is easy to construct a differential form \( \o \in \B_0^r(U) \) which is identically one in the shaded region above the point \( x \) and identically zero in the shaded region below \( x \). Clearly, \( \o \) is not extendable to a neighborhood of \( U \).       
		\item Let \( Q\) be the open unit disk in \( \R^2 \). The sequence \( ((1-1/n,0); e_1\wedge e_2) \) converges to \( ((1,0); e_1\wedge e_2) \) in both \( \hB_2^1(Q) \) and \( \hB_2^1(\R^2) \). Now \(  (1,0) \notin Q  \), but \( ((1,0); e_1\wedge e_2) \) is still a chain ready to be acted upon by elements \( \o \in \B_2(Q) \). For example, \[ \chi_Q dxdy((1,0); e_1\wedge e_2)  = \lim_{n \to \i} \chi_Q dxdy ((1-1/n,0); e_1\wedge e_2)  = 1. \]   
		\item \label{o0} Let \( Q' \) be  the unit disk \( Q \) in \( \R^2 \) minus the closed set \( [0,1] \times \{0\} \), and define \[ \o_0(x,y) =
			\begin{cases} 
				\inf\{x,1\}, &\mbox{ if } 0 < x < 1, 0 < y < 1\\
				0, &\mbox{ else}
			\end{cases}. 
			\]  
  			This Lipschitz function \( \o_0 \in \B_0^1(Q') \) is not extendable to a Lipschitz function on \( \R^2 \).   (See Figure \ref{fig:Mapping} for a related example.)
			\end{enumerate}
\end{examples}

\section{Representatives of domains} 
\label{sec:representatives_of_domains_of_integration}
                                  
\subsection{Open sets} 
\label{sub:representatives_of_open_sets} 

\begin{defn}
	We say that \( J \in \hB_k^r(U)  \) \emph{\textbf{represents}} an oriented \( k \)-dimensional submanifold (possibly with boundary) \( M \) of \( U \) if \( \cint_J \o = \int_M \o \) for all \( \o \in \B_k^r(U) \), where the integral on the right hand side is the Riemann integral.
\end{defn}

In particular, an \( n \)-chain \( \widetilde{U} \) \emph{\textbf{represents}} an open set \( U \), if \( \cint_{\, \widetilde{U}} \o = \int_U \o \) for all forms \( \o \in \B_n^1(U) \).  We show there exists a unique element \( \widetilde{U} \) in \( \hB_n^1(U) \) representing \( U \)  for each bounded, open subset \( U \subset  \R^n \). 

Let \( Q \) be an \( n \)-cube in \( U \) with unit side length. For each \( j \ge 1 \), subdivide \( Q \) into \( 2^{nj} \) non-overlapping sub-cubes \( Q_{j_i} \), using a binary subdivision. Let \( q_{j_i} \) be the midpoint of \( Q_{j_i} \) and \( P_j = \sum_{i=1}^{2^{nj}} (q_{j_i}; 2^{-nj} \mathbb{1}) \) where \( \mathbb{1} = e_1 \wedge \cdots \wedge e_n \).   

\begin{lem}\label{lem:pjisone}
\( \|P_j\|_{B^r} = 1 \) for all \( r \ge 0 \).	
\end{lem}
 
\begin{proof}  
For each \( r \ge 0 \), 
\( 	\|P_j\|_{B^r} \le \sum_{i=1}^{2^{nj}} \|(q_{j_i}; 2^{-nj} \mathbb{1})\|_{B^r} \le 2^{nj} 2^{-nj} = 1 \).  By Corollary \ref{cor:han} \( \|P_j\|_{B^r} \ge \cint_{P_j} dV = 1 \) since \( \|dV\|_{B^r} = 1 \).  
\end{proof}
 
\begin{prop}\label{prop:cuberep} 
	The sequence \( \{P_j\} \) is Cauchy in the \( B^1 \) norm.
\end{prop}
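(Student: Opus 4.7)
The plan is to bound $\|P_{j+1} - P_j\|_{B^1}$ geometrically, then conclude by summing the telescoping series.

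First I would write $P_{j+1} - P_j$ explicitly as a sum of $1$-difference chains. Fix a cube $Q_{j_i}$ at level $j$; its binary subdivision yields $2^n$ sub-cubes $Q_{(j+1)_l^{(i)}}$ at level $j+1$ with centers $q_{(j+1)_l^{(i)}}$. Since $2^{-nj} = 2^n \cdot 2^{-n(j+1)}$, the contribution of $P_j$ on $Q_{j_i}$ may be rewritten as
\[
(q_{j_i};\, 2^{-nj}\mathbb{1}) = \sum_{l=1}^{2^n} (q_{j_i};\, 2^{-n(j+1)}\mathbb{1}),
\]
so the contribution of $P_{j+1}-P_j$ on $Q_{j_i}$ is
\[
\sum_{l=1}^{2^n} \left[(q_{(j+1)_l^{(i)}};\, 2^{-n(j+1)}\mathbb{1}) - (q_{j_i};\, 2^{-n(j+1)}\mathbb{1})\right] = \sum_{l=1}^{2^n} \Delta_{u_{il}}(q_{j_i};\, 2^{-n(j+1)}\mathbb{1}),
\]
where $u_{il} := q_{(j+1)_l^{(i)}} - q_{j_i}$.

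Next I would bound the norms. Each $\|u_{il}\|$ equals the distance from the center of a cube of side $2^{-j}$ to the center of one of its $2^n$ dyadic sub-cubes, which is $\tfrac{\sqrt{n}}{2}\cdot 2^{-(j+1)}$. Since $\|\mathbb{1}\| = 1$, each summand is a $1$-difference $k$-chain with $|\Delta_{u_{il}}(q_{j_i}; 2^{-n(j+1)}\mathbb{1})|_{B^1} \le \|u_{il}\|\cdot 2^{-n(j+1)}$. Summing across the $2^{nj}$ level-$j$ cubes and the $2^n$ sub-cubes of each, and applying the definition of the $B^1$ norm (Definition \ref{def:norms}),
\[
\|P_{j+1} - P_j\|_{B^1} \le 2^{nj}\cdot 2^n\cdot \frac{\sqrt n}{2}\cdot 2^{-(j+1)}\cdot 2^{-n(j+1)} = \frac{\sqrt n}{2}\cdot 2^{-(j+1)}.
\]

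Finally, for $j < m$, the triangle inequality gives
\[
\|P_m - P_j\|_{B^1} \le \sum_{i=j}^{m-1} \|P_{i+1} - P_i\|_{B^1} \le \frac{\sqrt n}{2}\sum_{i=j}^{m-1} 2^{-(i+1)} \le \frac{\sqrt n}{2}\cdot 2^{-j},
\]
which tends to $0$ as $j\to\infty$, so $\{P_j\}$ is Cauchy. The only substantive step is the geometric rewriting in Step 1 — once each $(q_{j_i}; 2^{-nj}\mathbb{1})$ is split into $2^n$ equal pieces and paired with the sub-cube centers, the estimate is immediate; there is no real obstacle, only bookkeeping of the dyadic combinatorics.
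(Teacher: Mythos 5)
Your proof is correct and is essentially the paper's own argument made explicit: the paper likewise splits each mass-$2^{-nj}$ element of $P_j$ into $2^n$ duplicate copies of mass $2^{-n(j+1)}$, pairs them bijectively with the level-$(j+1)$ midpoints, and bounds $P_{j+1}-P_j$ as a sum of $1$-difference chains whose translation vectors are $O(2^{-j})$, then sums the geometric series. Your constant $\tfrac{\sqrt n}{2}2^{-(j+1)}$ is in fact sharper than the paper's stated bound of $2^{-j+1}$, but nothing else differs.
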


\begin{proof}
  Here is the basic idea to estimate \( \|P_j - P_{j+1} \|_{B^1} \):  Both Dirac \( n \)-chains \( P_j \) and \( P_{j+1} \) have mass one. They are supported in sets of points placed at midpoints of the binary grids with side length \( 2^{-j} \) and \( 2^{-(j+1)} \), respectively. Each of the \( n \)-elements of \( P_j \) has mass \( 2^{-nj} \) and those of \( P_{j+1} \) have mass \( 2^{-n(j+1)} \). The key idea is to think of each simple \( n \)-element of \( P_j \) as \( 2^n \) duplicate copies of simple \( n \)-elements of mass \( 2^{-n(j+1)} \) supported at the same point. This gives us a \( 1-1  \) correspondence between the simple \( n \)-elements of \( P_j \) and those of \( P_{j+1} \). We can choose a bijection so that the distance between points paired is less than \( 2^{-j+1} \). Use the triangle inequality with respect to \( P_j - P_{j+1} \) written as a sum of differences of these paired \( n \)-elements and Lemma \ref{lem:trnsl} to obtain \( \|P_j - P_{j+1} \|_{B^1} \le 2^{-j+1} \). It follows that \( \{P_j\} \) is Cauchy in the \( B^1 \) norm since the series \( \sum 2^{-j+1} \) converges. 
\end{proof}

Denote the limit  \( \widetilde{Q} := \lim_{j \to \i} P_j \) in the \( B^1 \) norm.  Then \( \widetilde{Q} \in \hB_n^1(U) \) is a well-defined differential \( n \)-chain.  By continuity of the differential chain integral \eqref{basic} and the definition of the Riemann integral as a limit of Riemann sums, we have \[ \cint_{\widetilde{Q}} \o = \lim_{j \to \i} \cint_{\sum (p_{j_i}; \a_{j_i})} \o = \int_Q \o.\] That is, \( \widetilde{Q} \) represents \( Q \).
 If \( W  = \cup_i Q_i \) is a finite union of non-overlapping cubes, then \( \widetilde{W} := \sum \widetilde{Q_i} \) represents \( U \).

\begin{thm}\label{thm:opensets} 
  Let \( U \subset W \) be bounded and open.  There exists a unique differential \( n \)-chain \( \widetilde{U} \in \hB_n^1(W) \) such that \( \cint_{\widetilde{U}} \o = \int_U \o \) for all \( \o \in \B_n^1(W) \), where the integral on the right hand side is the Riemann integral.   Furthermore, \( \|\widetilde{U}\|_{B^1} = \int_U dV \), the volume of \( U \).                                          
\end{thm}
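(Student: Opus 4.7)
\textbf{Proof plan for Theorem \ref{thm:opensets}.} My plan is to exhaust the bounded open set $U$ by a countable union of non-overlapping dyadic cubes, take the $B^1$-limit of the corresponding representatives, and then verify existence, the integral identity, the norm formula, and uniqueness in that order.

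\textbf{Step 1 (Decomposition and construction).} First I would use the standard dyadic decomposition: $U = \bigsqcup_{i=1}^\infty Q_i$, where the $Q_i$ are non-overlapping closed dyadic cubes (whose pairwise intersections are lower-dimensional and thus have volume zero). The construction in Proposition \ref{prop:cuberep} produces representatives $\widetilde{Q_i}$ of unit cubes; the same midpoint-subdivision construction, rescaled by the edge length, produces $\widetilde{Q_i} \in \hB_n^1(\R^n)$ for each $Q_i$, and the rescaled version of Lemma \ref{pjisone} together with the Cauchy estimate in Proposition \ref{prop:cuberep} gives $\|\widetilde{Q_i}\|_{B^0} = \mathrm{vol}(Q_i)$ and therefore $\|\widetilde{Q_i}\|_{B^1} \le \mathrm{vol}(Q_i)$. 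Setting $W_N := \bigcup_{i=1}^N Q_i$ and $\widetilde{W_N} := \sum_{i=1}^N \widetilde{Q_i}$ as on p.~\pageref{prop:cuberep}, for $N < M$ we have
\[
\|\widetilde{W_M} - \widetilde{W_N}\|_{B^1} \;\le\; \sum_{i=N+1}^{M}\|\widetilde{Q_i}\|_{B^1} \;\le\; \sum_{i=N+1}^{M} \mathrm{vol}(Q_i),
\]
which tends to $0$ as $N\to\infty$ since $\sum_i \mathrm{vol}(Q_i) = \mathrm{vol}(U) < \infty$. Thus $\{\widetilde{W_N}\}$ is Cauchy in $\hB_n^1(\R^n)$; define $\widetilde{U} := \lim_N \widetilde{W_N}$.

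\textbf{Step 2 (Integral identity).} For any $\o\in\B_n^1(\R^n)$, the bilinear estimate \eqref{basic} gives
\[
\left|\cint_{\widetilde{U}}\o - \cint_{\widetilde{W_N}}\o\right| \;\le\; \|\o\|_{B^1}\,\|\widetilde{U} - \widetilde{W_N}\|_{B^1} \;\longrightarrow\; 0.
\]
Since $\widetilde{W_N}$ represents $W_N$, we have $\cint_{\widetilde{W_N}}\o = \int_{W_N}\o$. Because $\o$ is Lipschitz and bounded and $U$ is bounded with $\mathrm{vol}(U\setminus W_N) = \sum_{i>N}\mathrm{vol}(Q_i) \to 0$, standard Lebesgue/Riemann convergence yields $\int_{W_N}\o \to \int_U \o$, giving $\cint_{\widetilde{U}}\o = \int_U \o$ for every $\o \in \B_n^1(\R^n)$.

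\textbf{Step 3 (Norm formula).} The upper bound follows from lower semicontinuity of the norm: $\|\widetilde{U}\|_{B^1} \le \liminf_N \|\widetilde{W_N}\|_{B^1} \le \lim_N \mathrm{vol}(W_N) = \mathrm{vol}(U)$. For the reverse inequality, apply Theorem \ref{cor:han} with the test form $dV$, whose $B^1$ norm equals $1$:
\[
\|\widetilde{U}\|_{B^1} \;\ge\; \frac{\bigl|\cint_{\widetilde{U}} dV\bigr|}{\|dV\|_{B^1}} \;=\; \int_U dV \;=\; \mathrm{vol}(U).
\]

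\textbf{Step 4 (Uniqueness).} If $J_1, J_2 \in \hB_n^1(\R^n)$ both represent $U$, then $\cint_{J_1 - J_2}\o = 0$ for all $\o\in\B_n^1(\R^n)$, whence $J_1 = J_2$ by nondegeneracy of the pairing in Corollary \ref{thm:integralpair}.

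\textbf{Anticipated obstacle.} The only nontrivial point I expect is justifying the convergence $\int_{W_N}\o \to \int_U \o$ under the "Riemann integral" wording of the statement: the frontier of a bounded open set need not have Jordan content zero in general, so the clean Riemann-integrability interpretation requires care. I would handle this by interpreting $\int_U\o$ as the (absolutely convergent) limit of Riemann integrals over the dyadic exhaustion $W_N$, which agrees with the Lebesgue integral of the Lipschitz form $\o$ on $U$ and is independent of the chosen exhaustion. Everything else is a routine assembly of the cube-representative construction, the $B^1 \le B^0$ inequality, and the duality results already established.
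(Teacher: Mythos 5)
Your proposal is correct and follows essentially the same route as the paper: decompose $U$ into non-overlapping cubes, show the partial sums of cube representatives are Cauchy in the $B^1$ norm via $\|\widetilde{Q_i}\|_{B^1}\le\|\widetilde{Q_i}\|_{B^0}=\mathrm{vol}(Q_i)$, pass to the limit for the integral identity, and obtain the norm formula by combining the triangle-inequality upper bound with the lower bound from Theorem \ref{cor:han} applied to $dV$. Your explicit treatment of uniqueness and your remark about interpreting the Riemann integral over an open set with possibly bad frontier are reasonable additions, but the argument is the paper's own.
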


\begin{proof}
	Let \( U = \cup_{i=1}^\i Q_i \) be  a Whitney decomposition of \( U \) into a union of non-overlapping \( n \)-cubes, and \( U_s = \sum_{i=1}^s \widetilde{Q_i} \).  Ue first show that the differential chains \( \widetilde{U_s} \) form a Cauchy sequence in \( \hB_n^1(W) \).  Now \( \|\widetilde{U_t} - \widetilde{U_s}\|_{B^1} = \sum_{i=s+1}^t \|\widetilde{Q_i}\|_{B^1} \le  \sum_{i=s+1}^t \|\widetilde{Q_i}\|_{B^0} =  \sum_{i=s+1}^t  \int_{Q_i} dV \) tends to zero as \( s \le t \to \i \) since the last sum is bounded by the volume of a small neighborhood of the frontier of \( U \), a bounded open set.  Therefore \( \sum_{i=1}^{\i} \widetilde{Q_i} \) converges to a well-defined chain  \( \widetilde{U} \in \hB_n^1(W) \).  That is, 
	\begin{equation}\label{whitdecomp}
		 \widetilde{U} = \sum_{i=1}^\i \widetilde{Q_i}
	\end{equation}
where convergence is in the \( B^1 \) norm.  
	 Suppose \( \o \in \B_n^1(W) \). Then \( \cint_{\widetilde{U}} \o = \lim_{s \to \i} \cint_{\sum_{i=1}^s \widetilde{Q_i}} \o  = \int_U \o \) by the definition of the Riemann integral.  
 	
	Ue first prove the last assertion for positively oriented unit \( n \)-cubes \( Q \subset W \).  Recall from Proposition \ref{prop:cuberep} that \( \widetilde{Q} = \lim_{j \to \i} P_j \) where the \( P_j \in \A_n(W) \).  Using Lemma \ref{lem:pjisone}, we have \( \|\widetilde{Q}\|_{B^1} = \lim_{j \to \i} \|P_j\|_{B^1} = 1 = \int_Q dV\). The result follows for cubes with arbitrary side length  by linearity. 
	Now let \( U \) be any bounded and open subset of \( W \). By \eqref{whitdecomp} we know 
	\( \|\widetilde{U}\|_{B^1}  = \lim_{s \to \i}  \|\sum_{i=1}^s \widetilde{Q_i}\|_{B^1} \le \lim_{s \to \i} \sum_{i=1}^s \|\widetilde{Q_i}\|_{B^1} = \lim_{s \to \i} \sum_{i=1}^s \int_{Q_i} dV =  \int_U dV  \).  Finally, since \( dV \in \B_n^1(W) \) and \( \|dV\|_{B^1} = 1 \) we use Corollary \ref{cor:han} to obtain \( \|\widetilde{U}\|_{B^1} \ge \cint_{\widetilde{U}} dV = \int_U dV \). 
\end{proof}   

\subsection{Polyhedral chains} 
\label{sub:polyhedral_chains}

\begin{defn}
	Recall that an \emph{\textbf{affine \( n \)-cell}} in \( U \) is the intersection of finitely many affine half spaces in \( U \) whose closure is compact.  The half spaces may be open or closed.  An affine \( n \)-cell can be partly open and partly closed. That is, it is the union of an open \( n \)-cell with possibly some of its faces.    An \emph{\textbf{affine \( k \)-cell}} in \( U \) is an affine \( k \)-cell in a \( k \)-dimensional affine subspace of \( U \).  
\end{defn}

\begin{thm} \label{thm:cells} 
	If \( \s \) is an oriented affine \( k \)-cell in \( U \), there is a unique differential \( k \)-chain \( \widetilde{\s} \in \hB_k^1(U) \) such that \[ \cint_{\widetilde{\s}} \o = \int_\s \o \]  for all \( \o\in \B_k^1(U) \), where the right hand integral is the Riemann integral.
\end{thm}

\begin{proof}
	The result follows from  Theorem \ref{thm:opensets} applied to the $k$-dimensional subspace of \( U \) containing \( \s \).
\end{proof}

\begin{defn}\label{poly}
	A \emph{\textbf{polyhedral \(k\)-chain in \( U \)}}\footnote{An equivalent definition uses simplices instead of cells. Every simplex is a cell and every cell can be subdivided into finitely many simplices} is a finite sum \( \sum_{i=1}^s a_i \widetilde{\s_i} \) where \( a_i \in \R \) and \( \widetilde{\s_i} \in \hB_k^1(U) \) represents an oriented affine \( k \)-cell \( \s_i \) in \( U \). 
\end{defn}

In \S\ref{sub:algebraic_chains} we use the pushforward operator to create smooth versions of polyhedral chains, resulting in representatives of compact submanifolds.
\subsubsection{Polyhedral chains are dense in differential chains}\label{sub:poly} 
 The main result of this section Theorem \ref{thm:cellsmore} is of central importance, for it allows us to use Dirac chains in our definitions and proofs, when we might be primarily interested in applications to submanifolds, polyhedral chains, or even soap films.   Its simple proof drops out of the definition of the \( B^r \) norms.  

Let \( p \in U \) and \( e_I \) a basis element of \( \L_k \).    Let \( Q_j \subset U \) be a \( k \)-cube centered at \( p \), with side length \( 2^{-j} \), and contained in the affine plane containing \( p \) and parallel to the \( k \)-direction of \( e_I \). Orient \( Q_k \) to match the orientation of \( e_I \). 

\begin{lem}\label{lem:limitpoint} 
	\( (p;e_I) = \lim_{j \to \i} 2^{j k}\widetilde{Q}_j \) in the \( B^1 \) norm.
\end{lem}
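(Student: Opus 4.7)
The plan is to first approximate each scaled cube $2^{jk}\widetilde{Q}_j$ by a Dirac $k$-chain supported on the midpoints of a binary subdivision (mimicking Proposition \ref{prop:cuberep} in the $k$-plane spanned by $e_I$), and then to compare those Dirac chains with $(p;e_I)$ by exhibiting their difference as a single sum of $1$-difference chains whose $B^1$ norm can be bounded by Lemma \ref{lem:trnsl}.

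First I would fix $j$ and, for each $l \geq 1$, subdivide $Q_j$ into $2^{kl}$ non-overlapping sub-$k$-cubes of side length $2^{-(j+l)}$ lying in the affine $k$-plane through $p$ parallel to $e_I$. Let $q_i$ be the midpoint of the $i$-th sub-cube and set
\[
P_{j,l} := \sum_{i=1}^{2^{kl}} \bigl(q_i;\, 2^{-(j+l)k} e_I\bigr).
\]
Running the argument of Proposition \ref{prop:cuberep} inside the $k$-plane (the construction is intrinsic to the plane, with $e_I$ playing the role of the unit $k$-volume element) shows that $\{P_{j,l}\}_{l \geq 1}$ is Cauchy in the $B^1$ norm, with limit $\widetilde{Q}_j$. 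Hence $\widetilde{Q}_j = \lim_{l \to \infty} P_{j,l}$ in $B^1$, and so by continuity of scalar multiplication,
\[
2^{jk}\widetilde{Q}_j \;=\; \lim_{l\to\infty} 2^{jk} P_{j,l} \;=\; \lim_{l\to\infty} \sum_{i=1}^{2^{kl}} \bigl(q_i;\, 2^{-lk} e_I\bigr).
\]

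Next, write $(p;e_I)$ as the trivial refinement $(p;e_I) = \sum_{i=1}^{2^{kl}} (p;\, 2^{-lk} e_I)$ into $2^{kl}$ identical pieces. Matching terms index by index, I get
\[
(p;e_I) - 2^{jk} P_{j,l} \;=\; \sum_{i=1}^{2^{kl}} \bigl[(p;\, 2^{-lk} e_I) - (q_i;\, 2^{-lk} e_I)\bigr] \;=\; -\sum_{i=1}^{2^{kl}} \Delta_{q_i - p}\bigl(p;\, 2^{-lk} e_I\bigr).
\]
By the triangle inequality and Lemma \ref{lem:trnsl},
\[
\bigl\|(p;e_I) - 2^{jk} P_{j,l}\bigr\|_{B^1} \;\le\; \sum_{i=1}^{2^{kl}} \|q_i - p\| \cdot 2^{-lk} \|e_I\|.
\]
Each $q_i$ lies in $Q_j$, so $\|q_i - p\| \leq \tfrac{\sqrt{k}}{2}\, 2^{-j}$. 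Since $\|e_I\| = 1$ and there are $2^{kl}$ terms of weight $2^{-lk}$, this sum is bounded by $\tfrac{\sqrt{k}}{2}\, 2^{-j}$, uniformly in $l$.

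Finally, letting $l \to \infty$ and using continuity of the $B^1$ norm yields
\[
\bigl\|(p;e_I) - 2^{jk}\widetilde{Q}_j\bigr\|_{B^1} \;\le\; \tfrac{\sqrt{k}}{2}\, 2^{-j},
\]
which tends to $0$ as $j \to \infty$, giving the claim. The only subtle point is verifying the intrinsic $k$-plane version of Proposition \ref{prop:cuberep} used in the first step, but this is a direct transcription of the $n$-dimensional argument, since the definition of $P_{j,l}$ only involves midpoints and $k$-volumes inside the affine $k$-plane, and the constant $k$-vector $e_I$ behaves under binary refinement just like $\mathbb{1}$ did in the $n$-dimensional case.
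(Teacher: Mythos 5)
Your proof is correct and follows essentially the same route as the paper's: approximate $\widetilde{Q}_j$ by Dirac chains at midpoints of binary subdivisions, translate each simple $k$-element to $p$, and bound the resulting sum of $1$-difference chains via Lemma \ref{lem:trnsl} and the triangle inequality. Your constant $\tfrac{\sqrt{k}}{2}\,2^{-j}$ is in fact slightly more careful than the paper's stated bound of $2^{-j}$, but the difference is immaterial since both tend to $0$.
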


\begin{proof} 
By Theorem \ref{thm:opensets} the cube \( Q_j \) is represented by \( \widetilde{Q}_j \in \hB_k^1(U) \).  We know from Proposition \ref{prop:cuberep} that \( \widetilde{Q}_j \)  is the limit of Dirac \( k \)-chains \( A_{j,i} \) supported in the midpoints of the \( 2^{-(j+i)} \) binary subdivision of \( Q_j \). The total mass of \( A_{j,i} \) is \( 2^{-jk} \) and each of the \( k \)-elements has \( k \)-direction and orientation the same as that of \( e_I \).   We can translate each of the simple \( k \)-elements of \( A_{j,i} \) to \( p \).  The distance translated is less than \( 2^{-j} \).   The triangle inequality shows that \(\| 2^{j k}A_{j,i} - (p; e_I)\|_{B^1} \le 2^{-j} \).  The result follows since \( \|2^{j k}\widetilde{Q}_j - (p;e_I)\|_{B^1} \le 
\|2^{j k}\widetilde{Q}_j -  2^{j k}A_{j,i}\|_{B^1} + \| 2^{j k}A_{j,i} - (p;e_I)\|_{B^1} \).   
\end{proof}

It follows that if \( \a \in \L_k \) is simple, then  \( (p;\a) \) is the limit of renormalized representatives of \( k \)-cubes in the \( B^1 \) norm by writing \( \a = \sum a_I e_I \).  

\begin{thm}\label{thm:cellsmore} 
	Polyhedral \( k \)-chains are dense in the Banach space of differential \( k \)-chains \( \hB_k^r(U) \) for all \( r \ge 1 \). 
\end{thm}

\begin{proof} 
This follows from Lemma \ref{lem:limitpoint} and since Dirac chains are dense.  
\end{proof}

\begin{remark}
	Lemma  \ref{lem:limitpoint}  does not rely on any particular shape to approximate \( (p;\a) \). It certainly does not need to be a cube, nor do we need to use a sequence of homothetic replicas of a given shape. We may use any sequence of chains \( J_i \) whose supports tend to \( p \)  and  \( \vec_k(J_i) \to \a \). (The operator \( \vec_k:\hB_k \to \L_k \)   defined by \( \vec_k(\sum (p_i;\a_i)) := \sum \a_i \) is continuous on \( \hB_k^r \).  See \cite{whitney} for a proof for \( r = 1 \).  The general result for \( \hB_k^r \), \( 1 \le r \le \i \) is similar.)
\end{remark}

\subsection{Fractals} 
\label{fractals}
\begin{examples} [Representatives of fractals] \mbox{}
\begin{enumerate}  
	\item \label{cantor}  The middle third Cantor set can be represented as a limit of polyhedral \( 1 \)-chains: Let \( E_1 \) be the chain representing the oriented interval \( (0,1) \). Let \( C_1 \) represent \( (1/3, 2/3) \), and let \( E_2 = E_1 + (- C_1) \). We have replaced the word ``erase'' with the algebraically precise ``subtract''.  Recursively define \( E_n \) by subtracting the middle third of \( E_{n-1} \). The mass of \( E_n \) is \( (\frac{2}{3})^n \). It is not hard to show that the sequence \( \{ (\frac{3}{2})^n E_n \} \) forms a Cauchy sequence in \( \hB_1^1(U) \).   Therefore its limit is a differential \( 1 \)-chain \( \G \) in \( \hB_1^1(U) \).  (See \S\ref{sub:boundary_operator} where the boundary operator is applied to \( \G \).)         
	\item   The interior of the Sierpinski triangle \( T \) can be represented by a \( 2 \)-chain \( \widetilde{T} = \lim_{k \to \i} (4/3)^k \widetilde{S_{k_i}} \) in the \( B^1 \) norm where the \( S_{k_i} \) are oriented simplices filling up the interior of \( T \) in the standard construction\footnote{If we had started with polyhedral chains instead of Dirac chains, convergence would be in the \( B^0 \) norm.}(see Figure \ref{fig:Sierpinski}). The support of its boundary \( \p \widetilde{T} \) is \( T \). We may integrate smooth forms and apply the integral theorems to calculate flux, etc. Other applications to fractals may be found in \cite{continuity, earlyhodge}.
	\begin{figure}[ht]
	 	\centering
	 		\includegraphics[height=1in]{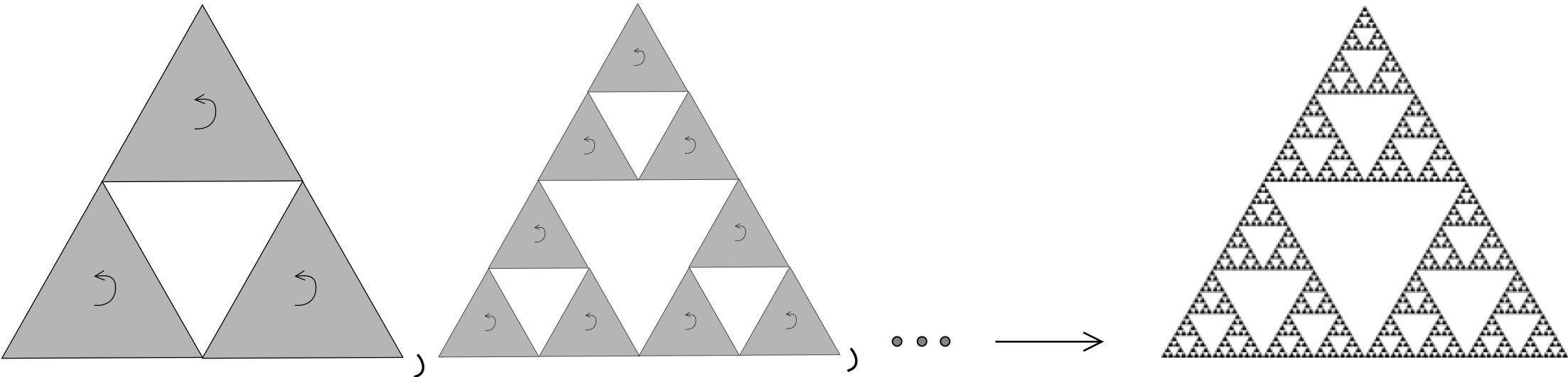}
	 	\caption{Sierpinski triangle}
	 	\label{fig:Sierpinski}
	\end{figure}  

 	\item  Recall that an \emph{\textbf{iterated function system}}  is a collection of functions 
			   \(  \{f_i:B\to B |i=1,2,\dots,N\},\ N\in\mathbb{N} \)  that are contractions on \( B \), an open ball of \( \R^n \).   	Let \( A \subseteq B \) be compact.  Define \( H(A) := \cup_{i=1}^N f_i(A) \).  Then \( S = \lim_{n \to \i} H^n(A) \)    is a well-defined subset of \( B \) which is  independent of \( A \).  The existence and uniqueness of \( S \) is a consequence of the contraction mapping principle. In particular, we can find \( S \) by setting \( A = \{p\} \) where  \( p \in B \).    Let   \( J_0:= \sum_{i=1}^N f_{i*}(p; e_1 \wedge \cdots \wedge e_n /N) \), \( J_1 :=  \sum_{i=1}^N f_{i*} J_0/N \) and \( J_n:= \sum_{i=1}^N f_{i*} J_{n-1}/N \).
			It is straightforward to show that \( \{J_n\} \) is Cauchy in the \( B^1 \) norm:   First note that \( \|J_n\|_{B^0} = 1 \).  We only have to bound the distance points are moved at the \( n \)-th stage, but such a bound is given by \( m^n \) where \( m = \max c_i \) and \( c_i < 1 \) is the contraction constant of \( f_i \).    
     	Let \( J := \lim_{n \to \i} J_n  \in \hB_n^1(B) \).  It follows that its mass \( M(J) = 1 \) and its support \( \supp(J) = S \).     
\end{enumerate}  
\end{examples} 

\section{The inductive limit and its topology} 
\label{sub:the_inductive_limit_topology}  
Some important applications only require the spaces \( \hB^r(U) \) for \( 0 \le r \le 2 \) (see, e.g., \cite{plateau10}).  But for a full theory with a continuous boundary operator, we need to take the inductive limit. The reader might wish to skip this section for the first reading since it uses some methods of topological vector spaces which are not widely known.  Results in sections after this will be safe to read as long as they are with regard to the Banach spaces \( \hB_k^r(U) \).    

Since the \( B^r \) norms are decreasing, the identity map   \( (\A_k(U), \|\cdot\|_{B^r}) \to \hB_k^s(U)  \) is continuous and linear whenever \( r \le s \), and therefore extends to the completion, \( \hB_k^r(U) \).  The resulting continuous linear maps \( u_k^{r,s}:\hB_k^r(U) \to \hB_k^s(U)  \) are well-defined \emph{\textbf{linking maps}}.   The next lemma is straightforward:
\renewcommand{\labelenumi}{(\alph{enumi})}
\begin{lem}\label{lem:varinj} 
	The maps \( u_k^{r,s}: \hB_k^r(U) \to \hB_k^s(U)  \) satisfy 
	\begin{enumerate} 
		\item \( u_k^{r,r} = Id \); 
		\item \( u_k^{s,t} \circ u_k^{r,s} = u_k^{r,t} \) for all \( r \le s \le t \);
		\item  The image   \( u_k^{r,s}(\hB_k^r(U)) \) is dense  in \( \hB_k^s(U)  \).
	\end{enumerate}
\end{lem}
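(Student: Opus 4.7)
The plan is to reduce all three statements to the observation that on the dense subspace $\A_k(\R^n)$ of Dirac chains each linking map acts as the literal inclusion. The comparability $\|\cdot\|_{B^s} \le \|\cdot\|_{B^r}$ for $r \le s$ (noted in the remarks following Definition \ref{def:norms}) guarantees that any sequence which is Cauchy in $B^r$ is Cauchy in $B^s$, so a Dirac chain injects into each completion and the continuous extension $u_k^{r,s}$ fixes $\A_k(\R^n)$ pointwise. This single observation drives all three parts.

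For part (a), the identity on $(\A_k(\R^n), \|\cdot\|_{B^r})$ has operator norm $1$, so its unique continuous extension to the completion $\hB_k^r(\R^n)$ is the identity; since $u_k^{r,r}$ was defined as that extension, (a) follows. For part (b), both $u_k^{s,t}\circ u_k^{r,s}$ and $u_k^{r,t}$ are continuous linear maps $\hB_k^r(\R^n)\to \hB_k^t(\R^n)$ that restrict to the inclusion on $\A_k(\R^n)$. The uniqueness of continuous extensions from a dense subspace (together with the Hausdorff property of $\hB_k^t(\R^n)$) forces them to agree everywhere.

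For part (c), the image $u_k^{r,s}(\hB_k^r(\R^n))$ contains $u_k^{r,s}(\A_k(\R^n)) = \A_k(\R^n)$, which is already dense in $\hB_k^s(\R^n)$ by construction of $\hB_k^s(\R^n)$ as the completion of $\A_k(\R^n)$ in the $B^s$ norm. Hence the image is dense.

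The lemma is essentially bookkeeping for the directed system, so I do not anticipate a genuine obstacle. The only point that deserves care is the implicit identification of Dirac chains across the various completions; this is legitimate precisely because the norms are monotone in $r$, and I would state this comparison explicitly at the start of the proof so that the phrase ``$u_k^{r,s}$ restricts to the identity on $\A_k(\R^n)$'' carries unambiguous meaning.
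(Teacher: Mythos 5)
Your argument is correct and is precisely the routine verification the paper has in mind: it states this lemma without proof, remarking only that it is straightforward. All three parts follow, as you say, from the monotonicity of the $B^r$ norms and the uniqueness of continuous extensions from the dense subspace $\A_k(\R^n)$ into a Hausdorff completion.
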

                                                              
In Corollary \ref{cor:inj} below we will show that each 
 \( u_k^{r,s}: \hB_k^r(U) \to \hB_k^s(U) \) is an injection.  This will follow from knowing that forms of class \( B^r \) are approximated by forms of class \( B^{r+1} \) in the \( B^r \) norm. Whitney proved this for \( C^r \) forms using convolution product (\cite{whitney} Chapter V, Theorem 13A), and our proof is similar.

For \( c > 0 \), let \( \overline{\kappa_c}:[0,\i)] \to \R \) be a nonnegative smooth function that is monotone decreasing, constant on some interval \( [0, t_0] \) and equals \( 0 \) for \( t \ge c.\) Let \( \zeta_{c}: U \to \R \) be given by \( \zeta_{c}(v) = \overline{\zeta_c}(\|v\|)  \). Let \( dV = dx_1 \wedge \cdots \wedge dx_n \) the unit \( n \)-form. We multiply \( \zeta_c \) by a constant so that \( \int_{U} \zeta_c (v) dV = 1 \). Let \( a_r \) be the volume of an \( n \)-ball of radius \( r \) in \( U \). For \( \o \in  \B_k^r(U) \) a differential $k$-form and \( (p;\a) \) a simple $k$-element, let \[ \o_c(p;\a) = \int_{U} \zeta_c(v)\o(p+v;\a)dV. \] 

\begin{thm}\label{thm:injection} 
	If \( \o \in  \B_k^r(U) \) and \( c > 0 \), then \( \o_c \in \B_k(U) \) and 
	\begin{enumerate} 
		\item \( L_u(\o_c) = (L_u \o)_c \) for all \( u \in \R^n \); 
		\item \( \|\o_c\|_{B^r} \le \|\o\|_{B^r} \); 
		\item \( \o_c \in \B_k^{r+1}(U)  \); 
		\item \( \o_c(J) \to \o(J) \) as \( c \to 0 \) for all \( J \in \hB_k^r(U) \).
	\end{enumerate} 
\end{thm}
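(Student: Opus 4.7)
The plan is to treat Theorem \ref{thm:injection} as a package of standard convolution identities, adapted to the slightly unusual $B^r$ scale. The unifying idea is that $\kappa_c\,dV$ is a smooth probability measure on $\R^n$ supported in the ball of radius $c$, so $\omega_c(p;\alpha)$ is a weighted average of translates of $\omega$; consequently $\omega_c$ inherits regularity from $\kappa_c$, while difference operators, derivatives, and limits applied to $\omega_c$ can be pushed through the integral onto $\omega$ by the translation-invariance of Lebesgue measure.

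First I would establish that $\omega_c \in \B_k(\R^n)$ and claim (3). Changing variables $w = p + v$ rewrites $\omega_c(p;\alpha) = \int \kappa_c(w - p)\omega(w;\alpha)\,dV(w)$. Since $\kappa_c$ is $C^\infty$ with compact support and $\omega$ is bounded (claim (2) below in fact gives $|\omega|_{B^0} \le \|\omega\|_{B^r}$), differentiation under the integral sign in $p$ is routine and produces bounded directional derivatives of every order, so $\omega_c \in \B_k(\R^n)$, and in particular $\omega_c \in \B_k^{r+1}(\R^n)$. For claim (1), assuming $r \ge 1$ so that $L_u\omega$ exists classically by Proposition \ref{lem:oncemore}, I would write the difference quotient $t^{-1}(\omega_c(p + tu;\alpha) - \omega_c(p;\alpha)) = \int \kappa_c(v)\cdot t^{-1}(\omega(p + tu + v;\alpha) - \omega(p+v;\alpha))\,dV$ and pass the limit inside by dominated convergence, the integrand being uniformly bounded on the compact support of $\kappa_c$ by Lemma \ref{est}.

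For claim (2) the key observation is the identity $\omega_c(\D_{\sigma^j}(p;\alpha)) = \int \kappa_c(v)\,\omega(\D_{\sigma^j}(p+v;\alpha))\,dV$, which follows by linearity of $\omega_c$ because each vertex $(p+w;\alpha)$ of the parallelepiped $\D_{\sigma^j}(p;\alpha)$ maps to $\omega_c(p+w;\alpha) = \int \kappa_c(v)\omega(p+w+v;\alpha)\,dV$ and the signed vertex sum commutes with the integral. Combining this with the pointwise bound $|\omega(\D_{\sigma^j}(p+v;\alpha))| \le |\omega|_{B^j}\|\sigma\|\|\alpha\|$ from Definition \ref{eq:Bj} and the fact that $\kappa_c\,dV$ is a probability measure yields $|\omega_c|_{B^j} \le |\omega|_{B^j}$ for every $0 \le j \le r$, hence $\|\omega_c\|_{B^r} \le \|\omega\|_{B^r}$.

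Claim (4) is the only genuinely analytic step, and I would handle it by a three-term approximation using density of Dirac chains. Given $J \in \hB_k^r(\R^n)$ and $\epsilon > 0$, choose a Dirac chain $A \in \A_k(\R^n)$ with $\|J - A\|_{B^r} < \epsilon/(4\|\omega\|_{B^r} + 1)$, and split $\omega_c(J) - \omega(J) = \omega_c(J - A) - \omega(J - A) + (\omega_c - \omega)(A)$. The first two summands are bounded uniformly in $c$ by $2\|\omega\|_{B^r}\|J - A\|_{B^r} < \epsilon/2$ via Lemma \ref{lem:ineq} together with claim (2). For the finite Dirac chain $A = \sum_i (p_i;\alpha_i)$, pointwise convergence $\omega_c(p_i;\alpha_i) \to \omega(p_i;\alpha_i)$ is then the classical statement that mollification recovers a continuous function at each of its points of continuity, which is available once $r \ge 1$ by Proposition \ref{lem:oncemore}. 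The main obstacle is precisely this last step: it genuinely needs continuity of $\omega$ at the support points of the approximating $A$, so the borderline case $r = 0$ must either be argued via almost-everywhere convergence together with a careful choice of $A$ avoiding the discontinuity set, or simply be recognized as requiring $r \ge 1$ implicitly, consistent with the downstream use of this theorem to prove injectivity of the linking maps in Corollary \ref{cor:inj}.
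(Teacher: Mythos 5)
Your parts (1)--(3) follow essentially the paper's own route: the identity \( \omega_c(\D_{\sigma^j}(p;\alpha)) = \int\kappa_c(v)\,\omega(\D_{\sigma^j}(p+v;\alpha))\,dV \) together with the fact that \( \kappa_c\,dV \) is a probability measure gives (2), and the regularity of \( \omega_c \) is obtained by shifting the translation onto the kernel --- the paper does this as a Lipschitz estimate on the \( r \)-th derivatives (\( |\eta_c|_{Lip}\le a_c|\kappa_c|_{Lip}\|\omega\|_{B^r} \) with \( \eta=L_{\sigma^r}\omega \)), whereas you differentiate \( \int\kappa_c(w-p)\,\omega(w;\alpha)\,dV(w) \) under the integral sign to all orders; your version has the mild advantage of actually establishing the preamble claim \( \omega_c\in\B_k(\R^n) \), which the paper's proof never addresses explicitly. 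The genuine divergence is in (4). The paper expands \( (\omega_c-\omega)(p;\alpha) \) as \( \int\kappa_c(v)\|v\|\,L_v\omega(p;\alpha)\,dV + r_c \) and extracts a quantitative bound \( |(\omega_c-\omega)(A)|\le c'\|\omega\|_{B^r}\|A\|_{B^r} \) with \( c'\to0 \), i.e.\ convergence in the dual norm, which then passes to arbitrary \( J \) by density; you prove only the stated pointwise convergence by an \( \epsilon/3 \) split, using the \( c \)-uniform bound \( \|\omega_c\|_{B^r}\le\|\omega\|_{B^r} \) from (2) to control \( \omega_c(J-A)-\omega(J-A) \), and classical pointwise convergence of mollifications on the finite Dirac chain \( A \). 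Your conclusion is weaker (no rate, no uniformity over bounded sets of \( J \)), but it is exactly what the theorem asserts and is all that the downstream application in Corollary \ref{thm:flat} requires. Finally, both arguments tacitly need \( r\ge1 \) in step (4) --- the paper through the appearance of \( L_v\omega \), you through continuity of \( \omega \) at the support points of \( A \) --- and you are right to flag \( r=0 \) as genuinely problematic: a bounded measurable form discontinuous on \( \supp(A) \) defeats both proofs, a gap the paper passes over in silence.
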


\begin{proof} (a):
	\begin{align*} 	
  L_u (\o_c)(p;\a) &= \lim_{\e \to 0} \o_c((p+\e u;\a) - (p;\a))/\e = 
	\lim_{\e \to 0}  \int_{U} \zeta_c(v)\o(p+ \e u +v;\a) - (p + v;\a)/\e dV \\& =
	 \int_{U} \zeta_c(v)(L_u \o(p+v;\a))dV   = (L_u\o)_c (p;\a). 
	\end{align*} 

	(b): Since \( \int_{U} \zeta_c dV = 1 \), we know 
	\begin{align*}
		\frac{|\o_c(\D_{\s^j}(p;\a))|}{\|\s\|\|\a\|} &= \left| \int_{U} \zeta_c(v)\o(T_v\D_{\s^j}(p;\a)/\|\s\|\|\a\|)dV \right|  
	 \le \sup_{v \ne 0}\frac{|\o(T_v \D_{\s^j}(p;\a))|}{\|\s\|\|\a\|}  
	\\& = \sup_{v \ne 0} \frac{|\o( \D_{\s^j}(p+v;\a))|}{\|\s\|\|\a\|}\le \sup_{ q \in U} \frac{|\o(\D_{\s^j}(q;\a))|}{\|\D_{\s^j}(q;\a)\|_{B^r}} 
	 \le \sup_{0 \ne A \in \A_k} \frac{|\o(A)|}{\|A\|_{B^r}}  = \|\o\|_{B^r} 
	\end{align*} 
	for all \( 0 \le j \le r \). Therefore, \( |\o_c|_{B^j} \le \|\o\|_{B^r} \), and hence \( \|\o_c\|_{B^r} \le \|\o\|_{B^r} \).

 	(c): Suppose \( 0 \le j \le r \). Let \( \eta = L_{\s^j} \o \). Then \( |\eta|_{B^0} \le \|\o\|_{B^r} < \i \). By (a) we know \( \eta_c = L_{\s^j} (\o_c) \). Now \[ \eta_c(T_u (p;\a)) = \int_{U} \zeta_c(v) \eta(T_{v+u} (p;\a))dV = \int_{U} \zeta_c(v-u) \eta (T_v (p;\a)) dV, \] and \[ \eta_c (p;\a) = \int_{U} \zeta_c(v) \eta(T_v (p;\a))dV. \] Since the integrand vanishes for \( v \) outside  ball of radius \( c \) about the origin, we have
	\begin{align*} 
		|\eta_c (p+u;\a)  - \eta_c (p;\a))| &= \left|\int_{U} (\zeta_c(v -u) - \zeta_c(v)) \eta (p+v;\a)dV \right| \\
		&\le \int_{U} |\zeta_c(v -u) - \zeta_c(v)| |\eta(p+v;\a)|dV \\
		&\le a_c |\zeta_c|_{Lip}\|u\||\eta|_{B^0}\|\a\|. 
	\end{align*} 
	Therefore, \( |\eta_c|_{Lip} \le a_c |\zeta|_{Lip} \|\o\|_{B^r} \). Using (b), Proposition \ref{prop:oncemore} and Theorem \ref{thm:seminorm}, we deduce \( \|\o_c\|_{B^{r+1}} < \i \).

 	(d): First of all \[(\o_c - \o) (p;\a)   =  \int_{U} \zeta_c(v)\|v\| \o((p+v;\a) - (p;\a))/\|v\|dV =  \int_{U} \zeta_c(v)\|v\| L_v\o(p;\a)dV + r_c   \] where \( r_c \to 0 \).  
   Let \( A \) be a Dirac chain. Then \[ |(\o_c - \o) (A)| \le c \left|  \int_{U} \zeta_c(v)  L_v\o(A)dV \right| \le c \left|  \sup_v\{ \zeta_c(v)\}  \int_{U}   L_v\o(A)dV \right| \le c \sup_v \{ \zeta_c(v)\} a_c |\o(A)| \le  c' \|\o\|_{B^r}\|A\|_{B^r}\] where \( c' \to 0 \) as \( c \to 0 \).  
 For   \( J \in \hB_k^r(U) \) choose \( A_i \to J \) in \( \hB_k^r(U) \) and use \( \|A_i\|_{B^r}\to \|J\|_{B^r} \).
	\end{proof}

\begin{cor}\label{thm:flat} 
	If \( J \in \hB_k^r(U) \) is a differential \( k \)-chain and \( 0 \le r \), then 
	\[ 
	\|J\|_{B^r} = \sup_{0 \ne \o \in \B_k^{r+1}} \frac{|\o(J)|}{\|\o\|_{B^r}}. 
	\]
\end{cor}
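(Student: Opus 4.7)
The plan is to prove the two inequalities separately, with the nontrivial one being a direct application of the mollification result, Theorem \ref{thm:injection}, together with the characterization of the $B^r$ norm on chains given in Theorem \ref{cor:han}.

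For the easy direction ($\le$ becomes $\ge$ here), I would observe that $\B_k^{r+1}(\R^n) \subseteq \B_k^r(\R^n)$, since any form of class $B^{r+1}$ satisfies $\|\o\|_{B^{r+1}} < \infty$ and hence, by monotonicity of the seminorms $|\o|_{B^j}$ in $j$, already lies in $\B_k^r(\R^n)$. Therefore
\[
\sup_{0 \ne \o \in \B_k^{r+1}} \frac{|\o(J)|}{\|\o\|_{B^r}} \le \sup_{0 \ne \o \in \B_k^r} \frac{|\o(J)|}{\|\o\|_{B^r}} = \|J\|_{B^r},
\]
where the final equality is Theorem \ref{cor:han}.

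For the reverse inequality, I would fix $\o \in \B_k^r(\R^n)$ with $\o(J) \ne 0$ (otherwise the term contributes nothing to the supremum in Theorem \ref{cor:han}) and apply the mollification construction $\o \mapsto \o_c$ of Theorem \ref{thm:injection}. That theorem delivers three facts we need simultaneously: (c) $\o_c \in \B_k^{r+1}(\R^n)$, (b) $\|\o_c\|_{B^r} \le \|\o\|_{B^r}$, and (d) $\o_c(J) \to \o(J)$ as $c \to 0$. For all sufficiently small $c$ the smoothing $\o_c$ is nonzero (since $\o_c(J) \to \o(J) \ne 0$), so it is a legitimate candidate in the right-hand supremum, and
\[
\sup_{0 \ne \o' \in \B_k^{r+1}} \frac{|\o'(J)|}{\|\o'\|_{B^r}} \ge \frac{|\o_c(J)|}{\|\o_c\|_{B^r}} \ge \frac{|\o_c(J)|}{\|\o\|_{B^r}}.
\]
Sending $c \to 0$ gives $\sup_{0 \ne \o' \in \B_k^{r+1}} \frac{|\o'(J)|}{\|\o'\|_{B^r}} \ge \frac{|\o(J)|}{\|\o\|_{B^r}}$, and taking the supremum over all nonzero $\o \in \B_k^r(\R^n)$ combined with Theorem \ref{cor:han} yields $\ge \|J\|_{B^r}$, completing the equality.

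There is essentially no obstacle here: all the analytic content is packaged in Theorem \ref{thm:injection}, and the proof is simply the observation that mollification regularizes without increasing the $B^r$ norm and converges pointwise on each fixed differential chain. The only mild point to be careful about is excluding the $\o=0$ case from the suprema, which is handled by noting that in the dual characterization of $\|J\|_{B^r}$ the forms with $\o(J)=0$ contribute $0$ and can be replaced by their nonzero mollifications as above.
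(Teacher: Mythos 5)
Your proposal is correct and follows essentially the same route as the paper: both directions rest on Theorem \ref{cor:han} together with parts (b)--(d) of the mollification Theorem \ref{thm:injection}, with the easy inequality coming from the inclusion \( \B_k^{r+1} \subset \B_k^r \). The only cosmetic difference is that you phrase the limit \( \o_c(J) \to \o(J) \) directly where the paper uses an \( \e \)-bookkeeping version of the same estimate.
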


\begin{proof}    
	According to Corollary \ref{cor:han}, we have \( \|J\|_{B^r} =  \sup_{0 \ne \o \in \B_k^{r}}  \frac{|\o(J)|}{\|\o\|_{B^r}}.  \) Suppose \( \o \in \B_k^r(U)  \). Let \( \e > 0 \). By Theorem \ref{thm:injection} (b)-(d), there exists \( c > 0 \) such that 
	\begin{align*} 
		\frac{|\o(J)|}{\|\o\|_{B^r}} \le \frac{|\o_c(J)| + \e}{\|\o\|_{B^r}}
		 \le \frac{|\o_c(J)| + \e}{\|\o_c\|_{B^r}} \le \sup_{\eta \in \B_k^{r+1}} \frac{|\eta(J)| + \e}{\|\eta\|_{B^r}}. 
	\end{align*}
	Since this holds for all \( \e > 0 \), we know \( \|J\|_{B^r} =  \sup_{\o \in \B_k^r(U) } \frac{|\o(J)|}{\|\o\|_{B^r}} \le \sup_{\eta \in \B_k^{r+1}(U) } \frac{|\eta(J)|}{\|\eta\|_{B^r}} \).  Equality holds since \( \B^{r+1} \subset \B^r \).
\end{proof}

\begin{cor}\label{cor:inj} 
	The linking maps \( u_k^{r,s}: \hB_k^r(U) \hookrightarrow \hB_k^s(U)  \) are injections for each \( r \le s \).
\end{cor}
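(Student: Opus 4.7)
The plan is to reduce the general case $r \le s$ to the single step $s = r+1$ and then apply Corollary \ref{thm:flat}, which was designed precisely to give approximation of the $B^r$ norm by $B^{r+1}$ forms.

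For the reduction, part (b) of Lemma \ref{lem:varinj} factorizes $u_k^{r,s} = u_k^{s-1,s} \circ u_k^{s-2,s-1} \circ \cdots \circ u_k^{r,r+1}$, and a composition of injections is an injection, so it suffices to treat the case $s = r+1$.

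For the one-step case, suppose $J \in \hB_k^r(\R^n)$ satisfies $u_k^{r,r+1}(J) = 0$. Choose Dirac chains $A_i \in \A_k(\R^n)$ with $A_i \to J$ in the $B^r$ norm; continuity of the linking map then forces $A_i \to 0$ in the $B^{r+1}$ norm. Fix any test form $\o \in \B_k^{r+1}(\R^n)$. Since $\|\o\|_{B^r} \le \|\o\|_{B^{r+1}} < \infty$ by the definition of the norms, $\o$ is a continuous linear functional on both $\hB_k^r(\R^n)$ and $\hB_k^{r+1}(\R^n)$ by Theorem \ref{thm:bipair}. Passing to the limit along $(A_i)$ in each space yields $\o(J) = \lim_i \o(A_i) = 0$. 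Since this holds for every $\o \in \B_k^{r+1}(\R^n)$, Corollary \ref{thm:flat} gives
\[
\|J\|_{B^r} = \sup_{0 \ne \o \in \B_k^{r+1}} \frac{|\o(J)|}{\|\o\|_{B^r}} = 0,
\]
and hence $J = 0$ in $\hB_k^r(\R^n)$.

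The main obstacle is not in this injectivity argument itself, which is a short duality calculation, but in the preceding claim that $\B_k^{r+1}(\R^n)$ is norming on $\hB_k^r(\R^n)$ — work already carried out via the convolution smoothing $\o \mapsto \o_c$ of Theorem \ref{thm:injection} and packaged in Corollary \ref{thm:flat}. Without that approximation one would only be able to conclude $\o(J) = 0$ for $\o$ in $(\hB_k^{r+1})' = \B_k^{r+1}(\R^n)$, which a priori is strictly smaller than the full dual $(\hB_k^r(\R^n))' = \B_k^r(\R^n)$ and would not by itself separate $J$ from $0$.
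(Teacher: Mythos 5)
Your proof is correct and follows essentially the same route as the paper: reduce to the single step $s=r+1$, observe that every $\o\in\B_k^{r+1}(\R^n)$ annihilates $J$, and invoke Corollary \ref{thm:flat} to conclude $\|J\|_{B^r}=0$. The only difference is that you spell out, via a common approximating sequence of Dirac chains, why $\o(u_k^{r,r+1}(J))=0$ forces $\o(J)=0$ — a detail the paper leaves implicit.
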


\begin{proof}  It suffices to prove this for \( s = r+1 \).
	Suppose there exists \( J \in \hB_k^r(U) \) with \( u_k^{r,r+1}(J) = 0 \).   Then \( \o( u_k^{r,r+1}(J)) = 0 \) for all \( \o \in \B_k^{r+1}(U) \).   By Corollary \ref{thm:flat},  this implies that \( \|J\|_{B^r} = 0 \), and hence \( J = 0 \). 
\end{proof} 

\begin{defn}
 Let \( \hB_k(U) = \hB_k^\i(U)  := \varinjlim\,\hB_k^r(U) \), the inductive limit as \( r \to \i \).  Let \( u_k^r:\hB_k^r(U) \to  \hB_k(U) \)  the \emph{\textbf{canonical inclusion}} maps into the inductive limit.	  
\end{defn} 

\begin{cor}\label{cor:injectioninductive}
The canonical inclusions \( u_k^r:\hB_k^r(U) \to  \hB_k(U) \) are injective.
\end{cor}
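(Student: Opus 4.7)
The plan is to deduce injectivity of the canonical inclusions into the inductive limit directly from the injectivity of the linking maps already established in Corollary \ref{cor:inj}. The statement is purely algebraic in nature: it is a general fact that a colimit of injective morphisms of vector spaces over a directed index set has injective structure maps. No topology is needed to carry out the argument.

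First I would recall the algebraic construction of the inductive limit. A vector $J \in \hB_k^r(\R^n)$ satisfies $u_k^r(J) = 0$ in $\hB_k(\R^n) = \varinjlim \hB_k^s(\R^n)$ if and only if there exists some index $s \ge r$ such that $u_k^{r,s}(J) = 0$ already in $\hB_k^s(\R^n)$. This is the standard description of vanishing in a filtered colimit of vector spaces: the colimit is the quotient $\bigoplus_s \hB_k^s(\R^n)/N$, where $N$ is the subspace generated by all differences $J - u_k^{s,t}(J)$ for $s \le t$, and a representative becomes zero in the quotient exactly when it becomes zero after applying finitely many linking maps.

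Once this is in hand, the proof is immediate. Suppose $u_k^r(J) = 0$ for some $J \in \hB_k^r(\R^n)$. Choose $s \ge r$ with $u_k^{r,s}(J) = 0$ in $\hB_k^s(\R^n)$. By Corollary \ref{cor:inj}, the linking map $u_k^{r,s}$ is injective, so $J = 0$. Hence $u_k^r$ is injective, as required.

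I do not anticipate any serious obstacle, since the substantive analytic work was already carried out in Corollary \ref{cor:inj} (and, further upstream, in Corollary \ref{thm:flat} and Theorem \ref{thm:injection}). An alternative route, slightly longer, would use duality via Theorem \ref{thm:derham}: a nonzero $J \in \hB_k^r(\R^n)$ is separated by some $\o \in \B_k^r(\R^n)$, and by mollification (Theorem \ref{thm:injection}) we can replace $\o$ by a form in $\B_k^{r+1}(\R^n)$ that still does not annihilate $J$; the compatibility of such separating functionals with the linking maps yields a functional on $\hB_k(\R^n)$ not vanishing on $u_k^r(J)$. The direct algebraic argument above is cleaner, however, so I would present that.
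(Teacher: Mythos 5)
Your proposal is correct and follows essentially the same route as the paper: both reduce the statement to the injectivity of the linking maps established in Corollary \ref{cor:inj} and then invoke the standard fact that a directed colimit of vector spaces with injective linking maps has injective canonical inclusions (the paper simply cites K\"othe for this fact, whereas you spell out the underlying argument that an element vanishes in the colimit iff it vanishes under some linking map). No gap; nothing further needed.
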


\begin{proof} 
We know from Corollary \ref{cor:inj} that the linking maps are injective. If the linking maps \( u_k^{r,s} \) defining an inductive limit of vector spaces are injective, then so are the canonical inclusions \( u_k^r \) (\cite{kothe}, p. 219).
\end{proof}  
 
Each \( u_k^r(\hB_k^r(U))  \) is a Banach space with norm \( \|u_k^r(J)\|_r = \|J\|_{B^r} \).  Since the \( B^r \) norms are decreasing, then \( \{u_k^r(\hB_k^r(U))\} \) forms a nested sequence of increasing Banach subspaces of  \(  \hB_k(U) \).  We endow \( \hB_k(U) \) with the  the inductive limit topology \( \t_k \); it is the finest locally convex topology such that the maps \( u_k^r:\hB_k^r(U) \to  \hB_k(U) \) are continuous. If \( F \) is locally convex, a linear map \( T:( \hB_k(U),\t_k) \to F \) is continuous if and only if each \( T \circ u_k^r: \hB_k^r(U) \to F \) is continuous\footnote{One can also introduce H\"older conditions into the classes of differential chains and forms as follows: For \( 0 < \b \le 1 \), replace \( \|\s^j\|\|\a\| = \|u_j\| \cdots \|u_1\|\|\a\| \) in definition \eqref{def:norms} with \(  \|u_j\|^\b \cdots \|u_1\|\|\a\| \) where \( \s^j = = u_j \circ \dots \circ u_1 \). The resulting spaces of chains \( \hB_k^{r-1+\b}(U) \) can permit fine tuning of the class of a chain. The inductive limit of the resulting spaces is the same \(  \hB_k(U) \) as before, but we can now define the \emph{\textbf{extrinsic dimension}} of a chain \( J \in  \hB_k(U) \) as \( \dim_E(J) := \inf \{ k+ j-1 + \b: J \in \hB_k^{j-1+\b}(U) \} \). For example, \( \dim_E(\widetilde{\s}_k) = k \) where \( \s_k \) is an affine \( k \)-cell since \( \widetilde{\s}_k \in \hB_k^1(U) \) (set \( j=0, \b = 1 \)), while \( \dim_E(\widetilde{S}) = \ln(3)/\ln(2) \) where \( S \) is the Sierpinski triangle. The dual spaces are differential forms of class \( B^{r-1+\b} \), i.e., the forms are of class \( B^{r-1} \) and the \( (r-1) \) order directional derivatives satisfy a \( \b \) H\"older condition. The author's earliest work on this theory focused more on H\"older conditions, but she has largely set this aside in recent years.}. However, the inductive limit is not strict.  For example, $k$-dimensional dipoles are limits of chains in \( u_0(\hB_k^0(U)) \) in \( (\hB_k(U), \t_k) \), but dipoles are not found in the Banach space \( u_0(\hB_k^0(U)) \) itself.  

\begin{defn}
	If \( J\in \hB_k(U) = \varinjlim\,\hB_k^r(U) \), the \emph{\textbf{class}} of \( J \) is the infimum over all \( r > 0 \) such that there exists  \( J^r\in\hB_k^r(U) \) with \( u_k^r (J^r) = J \).
\end{defn} 
 \emph{\textbf{Class}} is well-defined since the Banach spaces \( u_k^r( \hB_k^r(U))\) generate  \(  \hB_k(U) \) (\cite{AG}, p. 136).  

It is sometimes the  case that results in \( \hB_k^r(U) \) carry over easily to the inductive limit \( \hB_k(U) \), but not always\footnote{Grothendiek wrote, ``Some questions arise concerning a space which is an inductive limit, which often receive negative answers, even for the inductive limits of a sequence of Banach spaces, and which often present serious difficulties. '' and ``We remark that in practice the difficulties which we encounter in inductive limits are the `converse' of those met in projective limits (the coarsest topology for which...); here it is nearly always easy to show that the space is complete, and to determine whether its bounded subsets are weakly compact or compact..., an in particular to recognize it as either a reflexive or a Montel space.''  See \cite{AG}, p 138.}.  For example, is the inductive limit Hausdorff? Is it complete? Is it reflexive? Is it Montel? Does every bounded subset of the inductive limit come from a bounded subset of one of the Banach spaces?  Much of \cite{topological}  is devoted to answering questions of this type, as well as the current paper.  

\begin{thm}\label{thm:frechet} 
	The locally convex space \(  ( \hB_k(U),\t_k) \) is Hausdorff and separable. 
\end{thm}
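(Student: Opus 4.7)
My approach treats the two assertions separately, relying on the duality from Theorem \ref{thm:derham}, injectivity of the canonical inclusions (Corollary \ref{cor:injectioninductive}), and the convolutional smoothing of Theorem \ref{thm:injection}.

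For the Hausdorff property, suppose $J \in \hB_k(\R^n)$ is nonzero. Corollary \ref{cor:injectioninductive} yields some $r$ and a nonzero $J^r \in \hB_k^r(\R^n)$ with $u_k^r(J^r) = J$. Applying Hahn--Banach on the Banach space $\hB_k^r(\R^n)$ produces $X \in (\hB_k^r(\R^n))'$ with $X(J^r) \ne 0$, and Theorem \ref{thm:derham} identifies $X$ with a form $\omega \in \B_k^r(\R^n)$ satisfying $\omega(J^r) \ne 0$. The main obstacle is that $\omega$ generally fails to lie in $\B_k(\R^n) = \bigcap_s \B_k^s(\R^n)$, so it need not yield a functional continuous in $\t_k$. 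I would clear this by applying Theorem \ref{thm:injection}: its smoothing $\omega_c$ lies in $\B_k(\R^n)$, and part (d) gives $\omega_c(J^r) \to \omega(J^r) \ne 0$ as $c \to 0$; choose $c$ small enough that $\omega_c(J^r) \ne 0$. Since $\omega_c \in \B_k^s(\R^n)$ for every $s \ge r$, Theorem \ref{thm:derham} supplies continuous functionals $\Theta_s(\omega_c) \in (\hB_k^s(\R^n))'$ which agree with form evaluation on the dense subspace $\A_k(\R^n)$ and are therefore compatible with the linking maps $u_k^{r,s}$. The universal property of the inductive limit then assembles these into a $\t_k$-continuous functional $F: \hB_k(\R^n) \to \R$ with $F(J) = \omega_c(J^r) \ne 0$, separating $J$ from $0$.

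For separability, let $D$ denote the countable set of Dirac chains supported on points of $\Q^n$ with rational coefficients in the standard basis of $\L_k$. Density of $\A_k(\R^n)$ in each Banach space $\hB_k^r(\R^n)$ together with continuity of the $B^r$-norm in the coefficients of a Dirac chain gives that $D$ is dense in every $\hB_k^r(\R^n)$. Continuity of $u_k^r$ transfers this to density of $u_k^r(D) \subset \hB_k(\R^n)$ in $u_k^r(\hB_k^r(\R^n))$ endowed with its subspace topology. Because $\hB_k(\R^n) = \bigcup_r u_k^r(\hB_k^r(\R^n))$ set-theoretically (Corollary \ref{cor:injectioninductive}), any $\t_k$-open neighborhood of an arbitrary $J = u_k^r(J^r)$ restricts to a subspace-open neighborhood of $J$ in $u_k^r(\hB_k^r(\R^n))$ and therefore meets $u_k^r(D)$; this exhibits the (r-independent) countable image of $D$ as dense in $\hB_k(\R^n)$.
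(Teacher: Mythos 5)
Your proposal is correct and follows essentially the same route as the paper: the Hausdorff part separates a nonzero $J$ by lifting to a nonzero $J^r \in \hB_k^r(\R^n)$, producing a form with $\omega(J^r) \ne 0$ via the duality of Theorem \ref{thm:derham}, and then invoking the smoothing of Theorem \ref{thm:injection} to replace it with a form in $\B_k(\R^n)$ that still does not vanish on $J^r$; separability comes from Dirac chains with rational points and coefficients in both arguments. You have merely filled in details the paper leaves implicit (the Hahn--Banach step and the assembly of the separating functional over the inductive limit via its universal property), which is a faithful elaboration rather than a different proof.
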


\begin{proof}  
Suppose   \(  J \in \hB_k(U) \) is nonzero.   Since the class of \( J \) is well-defined, there exists \( J^r \in \hB_k^r(U) \) with \( u_k^r(J^r) = J \) where \( u_k^r:\hB_k^r(U) \to  \hB_k(U)  \) is the canonical injection.  Thus \( J^r \ne 0 \).  By Theorem \ref{thm:injection} there exists \( \o \in B^\i \) with \( \o(J^r) \ne 0 \). It follows that  \( \o(J) \ne 0 \) since \( J \) and \( J^r \) are approximated by the same Cauchy sequence of Dirac chains.  Therefore \(  \hB_k(U) \) is separated.  This implies that \(   \hB_k(U)  \) is Hausdorff (\cite{AG} p. 59).    
 
	For the second part, since Dirac chains are dense in each \( \hB_k^r(U) \), we can approximate a given Dirac chain \( D \) by a sequence of Dirac chains \( D_i=\sum_j (p_{j,i}; \a_{j,i}) \) whose points \( p_{j,i} \) and \( k \)-vectors \( \a_{j,i} \) have rational coordinates.  
\end{proof}  

\begin{defn}
	Let \( \B_k(U)  \) denote the Fr\'echet space of bounded \( C^\i \)-smooth differential \( k \)-forms, with bounds on the \( j \)-th order directional derivatives for each \( 0\le j<\i \).  The defining seminorms can be taken to be the \( B^r \) norms.
\end{defn}

\begin{prop}\label{prop:isomo} 
	The topological dual to \( \hB_k(U) \) is isomorphic to \( \B_k(U) \). 
\end{prop}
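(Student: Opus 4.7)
The plan is to build the isomorphism out of the Banach-level isomorphisms $\Psi_r \colon (\hB_k^r(\R^n))' \to \B_k^r(\R^n)$ already established in Theorem \ref{thm:derham}, glued together via the universal property of the inductive limit. First I would use the fact that for a locally convex target a linear map $X\colon (\hB_k(\R^n),\t_k) \to \R$ is continuous if and only if each composite $X \circ u_k^r \colon \hB_k^r(\R^n) \to \R$ is continuous. Define $\Psi \colon (\hB_k(\R^n))' \to \B_k(\R^n)$ by setting $\Psi(X)(p;\a) := X(u_k^0(p;\a))$, extended by linearity to all Dirac chains, hence to an exterior $k$-form on $\R^n$. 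For each $r \geq 0$, the restriction $X \circ u_k^r$ lives in $(\hB_k^r(\R^n))'$, so by Theorem \ref{thm:derham} it corresponds to a form $\o_r \in \B_k^r(\R^n)$ with $\|\o_r\|_{B^r} = \|X \circ u_k^r\|_{B^r}$. Because $u_k^{r,s}$ has dense image (Lemma \ref{lem:varinj}) and Dirac chains sit inside every $\hB_k^r(\R^n)$, all the $\o_r$ agree with $\Psi(X)$ on $\A_k(\R^n)$, hence $\Psi(X) \in \B_k^r(\R^n)$ for every $r$. Proposition \ref{lem:oncemore} then promotes $\Psi(X)$ to a smooth form with uniform bounds on every order of directional derivative, i.e.\ an element of $\B_k(\R^n)$.

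For the inverse, given $\o \in \B_k(\R^n)$, define $\Theta(\o) \colon \hB_k(\R^n) \to \R$ by $\Theta(\o)(J) := \o(J)$, interpreted through any representative $J^r \in \hB_k^r(\R^n)$. Using the Banach isomorphism $\Theta_r$ of Theorem \ref{thm:derham} we see that $\Theta(\o) \circ u_k^r = \Theta_r(\o) \in (\hB_k^r(\R^n))'$ for every $r$, so by the universal property $\Theta(\o) \in (\hB_k(\R^n))'$. Well-definedness under the identification of $J$ with any $J^r$ follows from the injectivity of the canonical maps $u_k^r$ (Corollary \ref{cor:injectioninductive}). A direct check on Dirac chains shows $\Psi \circ \Theta = \mathrm{id}$ and $\Theta \circ \Psi = \mathrm{id}$, giving a linear bijection.

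It remains to upgrade the bijection to a topological isomorphism. The Fréchet topology on $\B_k(\R^n)$ is generated by the seminorms $\|\cdot\|_{B^r}$. On the dual side, the natural topology is the strong dual topology $\b((\hB_k(\R^n))', \hB_k(\R^n))$, whose defining seminorms are $X \mapsto \sup_{J \in B}|X(J)|$ over bounded subsets $B \subset \hB_k(\R^n)$. I would proceed in two directions: continuity of $\Psi$ follows because each $\|\Psi(X)\|_{B^r} = \|X \circ u_k^r\|_{B^r} = \sup_{\|J\|_{B^r} \leq 1}|X(u_k^r(J))|$, and the image of the unit ball of $\hB_k^r(\R^n)$ is bounded in $\hB_k(\R^n)$ by continuity of $u_k^r$; continuity of $\Theta$ follows by invoking the description of bounded subsets of the inductive limit (from \cite{topological}: bounded sets of $\hB_k(\R^n)$ are carried by some $\hB_k^r(\R^n)$ with a uniform $B^r$-bound), so $\sup_{J \in B}|\o(J)| \leq C\|\o\|_{B^r}$ for appropriate $r$ and $C$.

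The main obstacle is exactly that last step: without the characterisation of bounded subsets of a non-strict inductive limit, one cannot match the strong dual topology on $(\hB_k(\R^n))'$ with the Fréchet topology on $\B_k(\R^n)$. Grothendieck's regularity criteria (an inductive limit of Banach spaces is \emph{regular} if every bounded set is absorbed by some step) are the right framework, and this is precisely what \cite{topological} verifies for the system $\{\hB_k^r(\R^n)\}$. Once regularity is in hand, the two topologies match seminorm by seminorm and $\Psi$ becomes a topological isomorphism, completing the proof.
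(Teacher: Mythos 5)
Your argument is essentially the paper's proof written out in full: the paper simply cites the general fact that the dual of an inductive limit is the projective limit of the duals (K\"othe, \S 22.7) and applies the levelwise isomorphisms $\Psi_r$ of Theorem \ref{thm:derham}, which is exactly what your explicit construction of $\Psi$ and $\Theta$ via the universal property of the inductive limit accomplishes. Your final two paragraphs on upgrading the bijection to a topological isomorphism for the strong dual topology go beyond what this proposition asserts --- that stronger statement is Theorem \ref{thm:isot} --- and you correctly identify that it rests on the regularity/bounded-set characterisation deferred to \cite{topological}, which is precisely how the paper handles it as well.
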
 

\begin{proof}   
 It is well known that the dual of the inductive limit topology is the projective limit of the duals, and vice versa. (\cite{kothe}, \S 22.7, for example.). 	Since \( \hB_k(U) = \varinjlim \hB_k^r(U) \), we know \( (\hB_k(U))' = \varprojlim (\hB_k^r(U))' \cong \varprojlim \B_k^r(U) = \B_k(U) \).  
\end{proof}  

Let \( F \) be the Fr\'echet topology on the space of differential $k$-forms \( \B_k(U) \) and \(  \b(\B_k(U), \hB_k(U)) \) the strong topology of the dual pair \( (\B_k(U), \hB_k(U)) \).     
   
 The following theorem was first established in \cite{topological}: 
 
\begin{thm}\label{thm:inclu}  
	\( (\B_k(U), \b(\B_k(U), \hB_k(U))) = (\B_k(U), F) \). 
\end{thm}  

	We immediately deduce the fundamental topological isomorphism of cochains and forms by J. Harrison and H. Pugh in \cite{topological}, without which results in this paper involving the inductive limit \( \hB_k(U) \)  would have less substance.  
	  
\begin{thm}\label{thm:isot} 
		The space of differential \( k \)-cochains \( (\hB_k(U))' \) with the strong (polar) topology is topologically isomorphic to the Fr\'echet space of differential \( k \)-forms \( \B_k(U) \).
\end{thm}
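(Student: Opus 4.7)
The plan is to bootstrap the statement from two inputs already in hand: the algebraic identification of continuous cochains with smooth forms coming from Proposition \ref{prop:isomo}, and the topological identification of the strong dual topology with the Fréchet topology \(F\) provided by Theorem \ref{lem:inclu}. The theorem is essentially the fusion of these two facts.

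First I would take the map \(\Phi : (\hB_k(\R^n))' \to \B_k(\R^n)\) supplied by Proposition \ref{prop:isomo}. Concretely, \(\Phi(X)\) is the form whose value on each simple \(k\)-element \((p;\a)\) equals \(X(p;\a)\); that this is a well-defined linear bijection comes from Theorem \ref{thm:derham} at each level \(r\) combined with the identification \((\varinjlim \hB_k^r(\R^n))' = \varprojlim (\hB_k^r(\R^n))'\) of the dual of an inductive limit with the projective limit of duals, as cited from Köthe.

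Next I would verify that \(\Phi\) is a topological isomorphism onto \((\B_k(\R^n), \b(\B_k(\R^n), \hB_k(\R^n)))\). By definition, the strong (polar) topology on \((\hB_k(\R^n))'\) is generated by the seminorms \(p_B(X) := \sup_{J \in B} |X(J)|\) as \(B\) ranges over bounded subsets of \(\hB_k(\R^n)\). Because \(\Phi\) is induced by the canonical bilinear pairing, the transported seminorms on \(\B_k(\R^n)\) are exactly \(p_B(\o) = \sup_{J \in B} |\o(J)|\), which are by definition the seminorms generating \(\b(\B_k(\R^n), \hB_k(\R^n))\). Hence the two strong topologies agree under \(\Phi\), so \(\Phi\) is a topological isomorphism onto \((\B_k(\R^n), \b(\B_k(\R^n), \hB_k(\R^n)))\) with no further work.

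Finally, I would invoke Theorem \ref{lem:inclu} to identify \(\b(\B_k(\R^n), \hB_k(\R^n)) = F\), the Fréchet topology on \(\B_k(\R^n)\) given by the seminorms \(\|\cdot\|_{B^r}\). Composing with the isomorphism of the previous step delivers the desired topological isomorphism \((\hB_k(\R^n))'_{\text{strong}} \cong (\B_k(\R^n), F)\). Strictly speaking, the deep content is not in this theorem itself but in Theorem \ref{lem:inclu}, which is established in \cite{topological}; the main obstacle here is really making sure the identification of strong dual seminorms with the seminorms of \(\b(\B_k(\R^n), \hB_k(\R^n))\) is carried out via the correct pairing (so that no implicit choices of topology on the projective limit side are conflated), after which the proof reduces to quoting Proposition \ref{prop:isomo} and Theorem \ref{lem:inclu}.
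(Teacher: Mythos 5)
Your proposal is correct and follows essentially the same route as the paper: the paper states Theorem \ref{thm:isot} as an immediate consequence of Proposition \ref{prop:isomo} (the algebraic identification of the dual via the K\"othe duality of inductive and projective limits) together with Theorem \ref{lem:inclu} (the coincidence of \( \b(\B_k(\R^n), \hB_k(\R^n)) \) with the Fr\'echet topology, imported from \cite{topological}). Your explicit check that the strong polar seminorms transport under the pairing to exactly the seminorms defining \( \b(\B_k(\R^n), \hB_k(\R^n)) \) is the (routine) step the paper leaves implicit in the phrase ``we immediately deduce.''
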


We next describe an equivalent way to construct the inductive limit \( ( \hB_k(U), \t_k) \) is via direct sums and quotients due to K\"othe \cite{kothe}.  This is the approach we shall use for it provides a clear way to establish continuity of operators in the inductive limit.
\begin{defn}\label{hk}      
	For \( 0 \le k \le n \), let \( H_k = H_k(U) \) be the \emph{\textbf{hull}} of the linking maps, that is, the linear span of the subset \( \{ J^r - u_k^{r,s}(J^r) \,:\, J^r \in \hB_k^r(U), s \ge r \ge 0 \} \subset \oplus_r \hB_k^r(U) \). 
\end{defn} 

Endow \( \oplus_{r = 0}^\i \hB_k^r(U) \) with the direct sum topology. Then \( \oplus_r \hB_k^r(U) /H_k \), endowed with the quotient topology, is topologically isomorphic to \( ( \hB_k(U),\t_k) \) (\cite{kothe}, \S 19.2, p.219). Since \( ( \hB_k(U),\t_k) \) is Hausdorff (Theorem \ref{thm:frechet}), it follows that \( H_k \) is closed. (\cite{kothe}, (4) p.216). Hence the projection \( \pi_k: \oplus_r \hB_k^r(U) \to \oplus_r \hB_k^r(U) /H_k \) is a continuous linear map (\cite{kothe}, \S 10.7 (3),(4)). Since the canonical inclusion \( \nu_k^r: \hB_k^r(U) \to \oplus_r \hB_k^r(U) \) is   continuous, the inclusion \(u_k^r := \pi_k \circ \nu_k^r: \hB_k^r(U) \to ( \hB_k(U),\t_k) \) is continuous.  
 
Suppose \( T: \oplus_{k=0}^n \oplus_{r\ge 0} \hB_k^r(U) \to \oplus_{k=0}^n \oplus_{r\ge 0} \hB_k^r(U) \) is  continuous and bigraded with \( T(\hB_k^r(U)) \subset \hB_\ell^s(U) \), and \( T(H_k) \subset H_\ell \). By the universal property of quotients, \( T \) factors through a graded linear map \( \hat{T}: \oplus_k \oplus_r \hB_k^r(U)/H_k \to \oplus_k \oplus_r \hB_k^r(U)/H_k \) with \( T = \hat{T} \circ \pi \) where  \( \pi \) is the canonical projection onto the quotient. That is,  \( \hat{T}[H_k + J] = [H_k + T(J)] \).  Let   \( \hB(U) = \hB^\i(U) := \oplus_{k=0}^n \hB_k(U) \), endowed with the direct sum topology. 
 
\begin{thm}\label{thm:continuousoperators}  
	If \( T: \oplus_{k=0}^n \oplus_{r=0}^\i \hB_k^r(U) \to \oplus_{k=0}^n \oplus_{r=0}^\i \hB_k^r(U') \) is a  continuous bigraded linear map  with \( T(\hB_k^r(U)) \subset \hB_\ell^s(U') \), and \( T(H_k(U)) \subset H_\ell(U') \), then \( T \) factors through a continuous graded linear map  \( \hat{T}: \hB(U) \to \hB(U') \) with \( T = \pi \circ \hat{T} \).
\end{thm}

\begin{proof}
	 By the universal property of quotients, \( T \) factors through a graded linear map \( \hat{T}: \oplus_k \oplus_r \hB_k^r(U)/H_k(U) \to \oplus_k \oplus_r \hB_k^r(U')/H_k(U') \) with \( T = \hat{T} \circ \pi \) where  \( \pi \) is the canonical projection onto the quotient. That is,  \( \hat{T}[H_k(U) + J] = [H_k(U') + T(J)] \). 	Furthermore, \( \hat{T}: \hB(U) \to \hB(U') \) is continuous and graded (\cite{kothe} \S 19.1 (7), p. 217).   
\end{proof}      

A dual result holds for projective limits and can be found in \cite{kothe} (\S 19.6 (6), p. 227), replacing  locally convex hulls with locally convex kernels and reversing all the arrows, but we do not need such formality.

\section{Density functions, partitions of unity, and support} 
\label{sec:multiplication_by_a_function_and_partitions_of_unity}

\subsection{Change of density}\label{sub:the_operator}

We prove that the topological vector space of differential chains \( \hB(U) \) is a graded module over the ring of functions \( \B_0(U) \).   

\begin{lem}\label{lem:X} 
	The Banach space \( \B_0^r(U) \) is a ring with unity via pointwise multiplication of functions. Specifically, if \( f, g \in \B_0^r(U) \), then \( \|f \cdot g\|_{B^r} \le n 2^r \|f\|_{B^r} \|g\|_{B^r} \). 
\end{lem}

\begin{proof} 
	The unit function \( u(x) = 1 \) is an element of \( \B_0^r(U) \) for \( r \ge 0 \) since \( \|u\|_{B^r} = 1 \). The proof \( \|f \cdot g\|_{B^r} \le 2^r \|f\|_{B^r} \|g\|_{B^r} \) for \( n = 1 \) is a straightforward application of the product rule and we omit it.  The general result follows by taking coordinates.  
\end{proof}  

Let \( {\cal L}(\hB(U)) \) be the algebra of operators on \( \hB(U) \) and \( {\cal L}(\B(U)) \) the algebra of operators on \( \B(U) \).  
 
\begin{defn}\label{def:mf}
	Define the bilinear map \emph{\textbf{multiplication by a function}}
	\begin{align*}
	 m: \B_0^r(U) \times \A_k(U) &\to \A_k(U)\\(f, (p;\a)) &\mapsto (p; f(p)\a)
	\end{align*} where \( (p;\a) \) is an arbitrary \( k \)-element with \( p \in U \) and extend linearly to \( \B_0^r(U) \times \A_k(U) \).
	Denote \( m_f(A):= m(f, A) \). Denote the dual operator by \( f \cdot \in {\cal L}(\B(U)) \) where \( f \in \B_0^r(U) \).   
	
\end{defn}
                    
\begin{thm}\label{thm:mf} 
 	Let \( U \) be open in \( \R^n \), \(0 \le k \le n \), and \( r \ge 0 \).	The bilinear map \( m = m_k^r: \B_0^r(U) \times \A_k(U) \to \A_k(U) \) extends to a continuous bilinear map \( m = m_k^r: \B_0^r(U) \times \hB_k^r(U) \to \hB_k^r(U) \) with \[ \|m(f,J) \|_{B^{r}} \le   n 2^r\|f\|_{B^{r}}\|J\|_{B^{r}} \mbox{ for all } J \in \hB_k^r(U). \]    For each \( 0 \le k \le n \), there exists a separately continuous\footnote{Joint continuity of the bilinear operators on the inductive limit is an open question.} bilinear map \( m = m_k:\B_0(U) \times \hB_k(U) \to \hB_k(U) \) which restricts to \( m_k^r \) on each \(  \B_0^r(U) \times u_k^r(\hB_k^r(U)) \).  
\end{thm}

\begin{proof} 
  We first prove the inequality for nonzero \( A \in \A_k(U) \). By Lemma \ref{lem:X}    \[ \frac{|\cint_{m_f A} \o|}{\|\o\|_{B^r}} \le \frac{\|f \cdot \o\|_{B^r} \|A\|_{B^r}}{\|\o\|_{B^r}} \le nr\|f\|_{B^r}\|A\|_{B^r}. \]  The inequality follows since \( \|m_f A\|_{B^r} = \sup \frac{|\cint_{m_f A} \o|}{\|\o\|_{B^r}} \).  We can therefore extend \( m_k^r \) to \( \hB_k^r(U) \) via completion, and the operator \( m_k^r \) will still satisfy the inequality.

	For the inductive limit,  first fix \( f \in \B_0(U) \).  Since \( m_f(\hB_k^r) \subset \hB_k^r \) and \( m_f(H_k(U) \subset H_k(U))  \), we can apply Theorem \ref{thm:continuousoperators} to extend \( m_f:\hB_k^r(U) \to \hB_k^r(U) \) to a continuous linear map \( m_f:\hB_k(U) \to \hB_k(U) \).  Define \( m:\B_0(U) \times \hB_k(U) \to \hB_k(U) \) by \(  m(f,J) := m_f (J) \).  This is well-defined since \( f \in \B_0(U) \) implies \( f \in \B_0^r(U) \).   This establishes continuity in the second variable.    
	
 Continuity in the first variable:    
	   Fix \( J \in \hB_k(U) \).  There exists \( r \ge 0 \) and \( J^r \in \hB_k^r(U) \) such that \( u_k^r(J^r) = J \). Suppose \( f_i \to 0 \) in \( \B_0(U) \).  Since the inclusion \( \B_0(U) \to \B_0^r(U) \) is the identity map and continuous,  then \( f_i \to 0 \) in \( \B_0^r(U) \).  Thus \( m(f_i, J) = m_{f_i}(J) \le  n 2^r\|f_i\|_{B^{r}}\|J\|_{B^{r}} \to 0  \). 
 
\end{proof}

\begin{thm}[Change of density]\label{thm:changeofden} 
The space of differential chains \( \hB(U) \) is a graded module over the ring \( \B_0(U) \) satisfying
	\begin{equation}
		\cint_{m_f J} \o = \cint_J f \cdot \o
	\end{equation} 
	for all matching pairs  \( J \in \hB_k^r(U) \), \( \o \in \B_k^r(U) \), and \( 0 \le r \le \i  \).
\end{thm}

\begin{proof}                                                            
	Since \( \o(m_f (p;\a)) = f \cdot \o (p;\a)) \) the integral relation holds for  Dirac chains, and thus to pairs of all chains and forms of matching class by continuity of the integral pairing of Theorem \ref{thm:mf} (for \( 0 \le r < \i \)) and Corollary \ref{thm:integralpair} (for \( r = \i \)).   
\end{proof}

We can actually say a bit more about the continuity of \( m_f \) with respect to \( f \in \B_0^r(U)\).  Convergence in \( \B_0^r(U) \) is restrictive as it does not lend itself nicely to bump functions and partitions of unity.  In particular, if \( \{\phi_i\} \) is a partition of unity, then \( \sum_{i=1}^{N} \phi_i \nrightarrow 1  \)   in the \( B^r \) norm.   In \S\ref{sub:support} we need \( m_f \) to be continuous under a more local notion of convergence (the \( B^r \) version of the compact-open topology.)   The following lemma and its proof are due to H. Pugh. 

\begin{lem}\label{lem:hpugh}
	Let \( f_i,\, f \in \B_0^r(U),\, r \ge 0 \), such that \( f_i \to f \) pointwise and \( \| f - f_i \|_{B^r} < C_r \) for some \( C_r > 0 \) independent of \( i \).  Then \( m_{f_i} J \to m_f J \) in the \( B^r \)-norm for all \( J \in \hB_k^r(U) \).   Suppose \( J \in \hB_k(U) \).  If \( f_i, f \in \B_0(U) \), \( f_i \to f \) pointwise and there exists \( C_r \) with \( \| f - f_i \|_{B^r} < C_r \) for all \( r \ge 0 \), then \( m_{f_i} J \to m_f J \) in \( \hB_k(U) \) .  
\end{lem}

\begin{proof}
	Let \( \e>0 \). We show there exists \( N \) such that if \( i>N \) then \( \| m_f J - m_{f_i} J \|_{B^r} < \e \). Suppose not.  Then for all \( N \) there exists \( i_N > N \) and \( \o_N \in \B_k^r(U)\) with \( \| \o_N \|_{B^r} = 1 \) such that \[ ((f-f_{i_N}) \o_N)(J) \ge \e. \]    (This is by the definition of the \( B^r \) norm on chains as a supremum over forms of norm \( 1 \).)	Now, let \( A_j \to J \) be Dirac chains.  Then there exists \( M \) such that if \( i>M \), then \( \o(J)- \o(A_j) < \e/2 \) for all \( \o\in \B_k^r(U)\) with \( \|\o\|_{B^r} < nrC \).  	Therefore, putting these two together, and using the fact that \( \|((f-f_{i_N}) \o_N)\|_{B^r} \le nr \| f - f_{i_N} \|_{B^r} \|\o_N\|_{B^r} < nrC \), we have \[ ((f-f_{i_N}) \o_N)(A_j) > \e/2 \] for all \( N \).  Now, fix such a \( j \).  Say \( A_j = \sum_{s=1^K} (p_s;\a_s) \).  Then \[ ((f-f_{i_N}) \o_N)(A_j) = \sum_s (f(p_s)-f_{i_N}(p_s))\o_N(p_s;\a_s). \] Since \( f_i\to f \) pointwise, we can make \( N \) large enough such that \[ \left| f(p_s)-f_{i_N}(p_s)\right| < \frac{\e}{2K\max_s\|\a_s\|_0}\] for all \( s \). Therefore, \[ \sum_s (f(p_s)-f_{i_N}(p_s))\o_N(p_s;\a_s) < \e/2, \] which is a contradiction.
	The last part follows by applying the first to \( J^r \in \hB_k^r(U) \) with \( u_k^r(J^r) = J \).  Then \( m_{f_i} J \to m_f J \) in the \( B^s \) norm for each \( s \ge r \).  
\end{proof}
 
\subsection{Partitions of unity}\label{ssub:partitions_of_unity}     
Since differential chains are a module over the ring \( \B_0 \) it is possible to form partitions of unity.

\begin{defn}
	A partition of unity \( \{f_i\} \) subordinate to a collection of open sets \( \{U_i\} \) is said to be \emph{\textbf{uniform}} if there exists a constant \( C_r > 0  \) such that 
	 if \( \|f_i -1\|_{B_r}  \le C_r\).  
\end{defn}

\begin{thm}\label{thm:pou} 
	Suppose \( \{U_i\}_{i=1}^\i \) is a locally finite bounded open cover of \( U \) and \( \{\Phi_i\}_{i=1}^\i \) is a uniform partition of unity subordinate to \( \{U_i\}_{i=1}^\i \) with \( \Phi_i \in \B_0^r(U) \). If \( J \in \hB_k^r(U) \), then \[ J = \sum_{i=1}^\i (m_{\Phi_i} J) \]  where the convergence is in  the \( B^r \) norm.  If \( J \in \hB_k(U) \), then \[ J = \sum_{i=1}^\i (m_{\Phi_i} J) \]  where the convergence is in  the inductive limit topology.
\end{thm}

\begin{proof}  	
 Apply  Pugh's Lemma \ref{lem:hpugh} to \( \sum_{i=1}^N \Phi_i \) and the function \( 1 \in \B_0(U) \).  Then \( \sum_{i=1}^N (m_{\Phi_i} J) =  (\sum_{i=1}^N m_{\Phi_i}) J  \to J \) in the \( B^r \) norm, assuming \( J \in \hB_k^r(U) \), or the inductive limit topology, assuming \( J \in \hB_k(U) \). 
\end{proof}

We immediately deduce

\begin{cor}[Fundamental Lemma]\label{cor:fundamentallemma}
If \( J \in \hB_k^r(U) \) satisfies \( m_fJ = 0 \) for all \( f \in \B_0(U) \) with compact support, then \( J = 0 \).   
\end{cor}

This implies the fundamental lemma in the calculus of variations.   For \( 0 = \cint_U f\cdot g dV = \cint_{m_f \widetilde{U}} g dV  \) for all \( g \) implies \( m_f \widetilde{U} = 0 \).   
 
\subsubsection{Partitions of unity and integration} 

If \( I = (0,1) \) and \( \{\phi_s\} \) is a partition of unity subordinate to a covering of \( I = \cup_{s=1}^k I_s \), then we can write the chain representative \( \widetilde{I} \) of \( I \) as a sum \( \widetilde{I} = \sum_{s=1}^k m_{\phi_s} \widetilde{I_s} \). This ties the idea of an atlas and its overlap maps to integration.  Roughly speaking, the overlapping subintervals have varying density, but add up perfectly to obtain the unit interval. We obtain \( \cint_{\widetilde{I}} \o = \sum_{s=1}^k \cint_{m_{\phi_s} \widetilde{I_s} } \o \). This is in contrast with our discrete method of approximating \( \widetilde{I} \) with Dirac chains \( A_m \) to calculate the same integral \( \cint_{\widetilde{I}} \o = \lim \cint_{A_m} \o \).

\subsection{Support of a chain} 
\label{sub:support}
  We show that associated to each nonzero chain \( J \in \hB_k(U) \) is a well-defined nonempty closed subset \( \supp(J) \subseteq \overline{U} \) called the \emph{\textbf{support}} of \( J \). 

\begin{defn}
	Let \( \O_\e(p) := \{ q \in \R^n \,:\, \|p-q\|< \e\} \), the open ball of radius \( \e > 0 \) about \( p \).  
\end{defn}

\begin{defn}\label{support}
		If \( J \in \hB_k^r(U) \) and \( 1 \le r \le \i \), let \[ \supp(J) := \left\{ p \in \overline{U} \,:\, \mbox{ for all  } \e > 0, \mbox{ there exists } \eta \in \B_k^r(U), \supp(\eta) \subset \O_\e(p),   \mbox{ and } \cint_J \eta \ne 0  \right\}. \]  This definition agrees with our previous definition of support on the subspace of Dirac chains \( \A_k(U) \).
\end{defn}

\begin{prop}\label{pro:supportsame}
Let \( J \in \hB_k^r(U) \).  Then \( \supp(J)= \supp(u_k^{r,s}(J)) = \supp(u_k^r(J)) \) for all \( 0 \le r \le s \). 
\end{prop}

\begin{proof} 
Suppose \( p \in \supp(J)  \) and \( \e > 0 \).  Then there exists \( \eta \in \B_k^r(U) \) with \( \supp(\eta) \subset \O_\e(p) \) and \( \cint_J \eta \ne 0 \). By Theorem \ref{thm:injection} \( \eta \) can be approximated by \( \xi \in \B_k(U) \) with \( \supp(\xi) \subset \O_\e(p) \) and \( \cint_J \xi \ne 0 \).  Thus \( p \in \supp(u_k^r(J)) \), as well as \( \supp(u_k^{r,s}(J)) \).   
\end{proof}
      By working with forms supported in \( \e \)-neighborhoods of points, it is not hard to see that the support of a chain agrees with our previous definition of the support of a Dirac chain \( A \).

If \( J = 0 \), then \( \supp(J) = \emptyset \).  	If \( X \) is any subset of \( \overline{U} \) with \( \supp(J) \subseteq X \), then we say \( J \) is \emph{\textbf{supported in \( X \)}}. If  \( \supp(J) \) is compact,  we say that \( J \) has \emph{\textbf{compact support}}.

\begin{thm}\label{thm:nonzero}
	If \( J \) is a nonzero differential chain in \( U \), then \( \supp(J) \) is a closed, nonempty set.
\end{thm}

\begin{proof} 
Suppose \( J \in \hB_k^r(U) \)  and \( \supp(J) = \emptyset \). 
Choose a locally finite cover of \( \{\O_1(p)\}_p \) of \( U = \supp(J)^c \).    Let \( \{f_i\} \) be a uniform partition of unity subordinate to this cover.  Then \( \sum_i m_{f_i} J \) converges to \( J \) in the \( B^r \) norm by Theorem \ref{thm:mf}.  Hence \( \cint_J \o = \sum_i \cint_{m_{f_i} J} \o = \sum_i \cint_J f_i \cdot \o = 0 \) for all \( \o \in \B_k^r(U) \).  Thus \( J = 0 \). Now suppose \( J \in \hB_k(U) \) and \( \supp(J) = \emptyset \).  Then there exists \( J^r \in \hB_k^r(U) \) with \( u_k^r(J^r) = J \).  By Proposition \ref{pro:supportsame} we know \( \supp(J) = \supp(J^r) = \emptyset \).  By the first part,  \( J^r = 0 \) and thus \( J = u_k^r(J^r) = 0 \). 

We show \( \supp(J) \) is closed:  Suppose \( p_i \to p \) and \( p_i \in \supp(J) \). Let \( \e > 0 \).   Then \( d(p_i,p) < \e/2 \) for sufficiently large \( i \).  Since \( p_i \in \supp(J) \), there exists \( \eta \in \B_k^r(U) \), respectively, \( \eta \in \B_k(U) \), supported in \( \O_{\e/2}(p) \) such that  \( \cint_J \eta \ne 0 \).  Thus \( p \in \supp(J) \) since \( \O_{e/2}(p) \subset \O_\e(p) \).
\end{proof}

The complement of the support of a chain has an immediate characterization from Definition \ref{support}: 
	\[ \supp(J)^c = \left\{ p \in U \,:\, \mbox{ for all  } \O_\e(p) \cap \supp(J) = \emptyset, \mbox{ and all } \eta \in \B_k^r(U), \supp(\eta) \subset  \O_\e(p) \mbox{ then } \cint_J \eta = 0  \right\}. \]

\begin{prop}\label{prop:complement}
 If \( U \) is a bounded open set with \( \overline{U} \subset \supp(J)^c \), then \( \cint_J \eta = 0 \) for all differential forms \( \eta \) supported in \( U \). If \( U \cap \supp(J) \ne \emptyset \), there exists a differential form \( \eta \) supported in \( U \) with \( \cint_J \eta \ne 0 \).  
\end{prop}

\begin{proof} 
Suppose  \(\overline{B} \subset \supp(J)^c \).  Cover \( \overline{U} \) with finitely many open balls \( \{B_i\} \) and \( \{f_i\} \) a uniform partition of unity with respect to \( \{B_i\} \).  By Theorem \ref{thm:changeofden} \( \cint_J \eta = \sum \cint_{m_{f_i}J} \eta = \sum \cint_{J} f_i \eta = 0   \). 	

The second part follows directly from Definition \ref{support}.
\end{proof}
  
 	Support is a map from \( \hB_k^r(U) \) to the power set of \( \overline{U} \).  It is not linear, nor is it continuous in the Hausdorff metric. Consider, for example, \( A_t =(p; t\a) + (q; (1-t)\a) \). Then \( \supp(A_t) = \{p,q\} \) for all \( 0 < t < 1 \), but \( \supp(A_0) = q \) and \( \supp(A_1) =p \).   However, we can say something about the support of limits of differential chains:

\begin{thm}\label{thm:ifjas}
Suppose \( X \subset U \) is closed and  \( \supp(J_i) \subset X \) for a sequence of chains  \( J_i \to J \) in \( \hB_k^r(U) \) for some \( r \ge 0 \),  or \( J_i \to J \) in \( \hB_k(U) \).    Then \( \supp(J) \subset X \).
\end{thm}

\begin{proof} 
	If \( J = 0 \), we are finished since \( \supp(J) = \emptyset \) by Definition \ref{support}. Suppose \( J \ne 0 \).  Then \( \supp(J) \ne \emptyset \) by Theorem \ref{thm:nonzero}.

Assume \( J_i \to J \)in \( \hB_k^r(U), r \ge 0 \), or \( J_i \to J \) in\( \hB_k(U) \)
Suppose there exists \( p \in \supp(J) \) and \( p \notin X \).  There exists \( \e > 0 \) such that  \( \O_\e(p)  \cap  X = \emptyset \), and \( \eta \in \B_k^r(U) \) with \( \cint_J \eta \ne 0 \) and \( \supp(\eta) \subset \O_\e(p) \).   On the other hand, \( \cint_J \eta = \lim_{i \to \i} \cint_{J_i} \eta = 0 \) since \( \supp(J_i) \subset X \).  The proof for \( J_i \to J \) in \( \hB_k(U) \) is essentially the same.
\end{proof}
                       
\begin{prop}\label{prop:supa}
 Suppose \( J \in \hB_k^r(U) \), \( f \in \B_0^r(U) \) and \( r \ge 0 \), or \( J \in \hB_k(U) \) and \( f \in \B_0(U) \).  Then \[ \supp(m_f J) \subseteq \supp(f) \cap \supp(J). \]   
\end{prop}

\begin{proof} 
	Let \( p \in \supp(m_f J) \).  If \( p \notin \supp(f) \cap \supp(J) \), there exists \( \e > 0 \) with \( \O_\e(p) \cap \supp(f) \cap \supp(J) = \emptyset \) and \( \eta \in \B_k^r(U)\) (respectively, \( \eta \in \B_k(U) \)) with \( \cint_J f \cdot \eta = \cint_{m_f J} \eta \ne 0 \) and \( \supp(\eta) \subset \O_\e(p) \).  Thus \( p \in \supp(J) \).  
	If \( p \notin \supp(f) \), we can choose \( \e > 0 \) so that \( \O_\e(p) \cap \supp(f) = \emptyset \).  Then \( \cint_J f \cdot \eta = 0 \), which is a contradiction.  Hence \( p \in \supp(f) \cap \supp(J) \), as claimed.  
\end{proof}

\begin{prop}\label{prop:operator} 
	If \( T \) is a continuous operator on \( \hB(U) \) with \( \supp(T (A)) \subseteq \supp(A) \) for all \( A \in {\cal A}(U) \), then \( \supp(T(J)) \subseteq \supp(J) \) for all \( J \in \hB(U)\). 
\end{prop}

\begin{proof}  Let \( S \) be the dual operator on \( \B(U) \) given by  \( S \o := \o T \).  
Suppose \( p \in \supp(T(J)) \) and \( p \notin \supp(J) \). There exists \( \e > 0 \) such that  \( \O_\e(p) \cap \supp(J) = \emptyset \).  Then \( \cint_J \o = 0 \) for all \( \o \in \B_k(U) \) supported in \( \O_\e(p) \).   Since \( p \in \supp(T(J)) \), there exists \( \eta \in \B_k(U) \) supported in \( \O_\e(p) \) so that \( \cint_J S(\eta) =  \cint_{T(J)} \eta \ne 0 \).    This is a contradiction since \( S(\eta) \in \B(U) \). 
\end{proof}

This result also holds for operators \( T:\hB_k^r(U) \to \hB_j^s(U) \) with suitable modifications to the subscripts and superscripts.

\begin{examples}\mbox{}
	\begin{enumerate}
		\item   If \( J \in \hB_k^r(U) \) and \( f \in \B_0^r(U) \), then \( \supp(m_f J) \subset \supp(J) \) by Proposition \ref{prop:supa}. If \( 1/f \in \B_0^r(U) \), then \( \supp(m_f J) = \supp(J) \) since \( \supp(J) = \supp(m_{1/f}m_f J) \subset \supp(m_f J) \).  Thus \( \supp(m_f J) = \supp(J) \).
		
		\item Any closed set \( S \) supports a chain. Simply choose a countable dense subset \( \{ p_i \} \subseteq S \), and note that \( S \) supports the chain \( J= \sum_{i=1}^{\i}(p_i; 1/2^i) \). There are uncountably many distinct chains with support \( S \) since \( \supp (m_f J) \subset \supp(J) \) by Proposition \ref{prop:supa}.
	\end{enumerate}
\end{examples}

\begin{prop}\label{prop:supportW}
  If \( W \subset U \) is bounded and open, then \( \supp(\widetilde{W}) = \overline{W} \).   If \( \s \) is an affine \( k \)-cell in \( U \), then \( \supp(\widetilde{\s}) = \overline{\s} \). 
\end{prop}  

\begin{proof}
 By Theorem \ref{thm:opensets} \( \widetilde{W} \in \hB_n^1(U) \).
	We first show that \( \supp(\widetilde{W}) \subseteq \overline{W} \):  Suppose \( p \in \supp(\widetilde{W}) \) and \( p \notin \overline{W} \).  There exists \( \e > 0 \) such that \( \O_\e(p) \cap \overline{W} \ne \emptyset \).   Furthermore, there exists \( \eta \in \B_k^r(U) \) supported in \( \O_\e(p) \) with \( \int_W \eta =  \cint_{\widetilde{W}} \eta \ne 0 \) by Theorem \ref{thm:opensets}.   But \( \int_W \eta = 0 \) for all \( \eta \) supported in \( \O_\e(p) \).   

	We next show \( W \subset \supp(\widetilde{W}) \):  Suppose there exists \( p \in W \) and \( p \notin \supp(\widetilde{W}) \).  Then \( \cint_{\widetilde{W}} \o = 0 \) for all \( \o \in \B_n^1(U) \) supported away from \( \supp(\widetilde{W}) \).   Since \( p \in W \), there exists a differential chain \( \eta \in \B_n(U) \) supported in \( B_\e(p) \),  and \( \int_W \eta = \cint_{\widetilde{W}} \eta \ne 0 \) by Theorem \ref{thm:opensets}.  

	Now suppose \( p \in \overline{W} \) and \( p \notin \supp(\widetilde{W}) \).   There exists  \(  \e > 0 \) with \( \O_\e(p) \cap \supp(\widetilde{W}) = \emptyset \).  Therefore, there exists \( q \in W \cap \O_\e(p) \).  It follows that  \( q \in \supp(\widetilde{W}) \), and therefore \( \cint_{\widetilde{W}} \eta \ne 0 \) for some \( \eta \in \B_k^r(U) \) supported in \( \O_\e(p) \). On the other hand, \( \cint_{\overline{W}} \o = 0  \) for all \( \o \) supported in \( \O_\e(p) \).     

The result extends readily to affine \( k \)-cells by working within the affine subspace containing \( \s \) and applying what we just proved.
\end{proof}

\section{Pushforward and change of variables} 
\label{sub:pushforward_and_change_of_variables}

\begin{defn}\label{push}
	Suppose \( U \subseteq \R^n\) and \( U' \subseteq \R^m \) are open,  and \( F:U \to U' \) is a differentiable map. For \( p \in U \), and \( 1 \le k \le n \),  define \emph{\textbf{linear pushforward}} \( F_{p*}(v_1 \wedge \cdots \wedge v_k) := DF_p(v_1) \wedge \cdots \wedge DF_p(v_k) \) where \( DF_p \) is the total derivative of \( F \) at \( p \). For \( k = 0 \), set \( F_{p*}(m) := m \).  Define \emph{\textbf{pushforward}} \( F_*(p;\a) := (F(p), F_{p*}\a) \) for all \( k \)-elements \( (p;\a) \) and extend to a linear map \( F_*:\A_k(U) \to \A_k(U') \).
\end{defn}   
                                                                     
\begin{figure}[ht] 
	\centering 
	\includegraphics[height=2in]{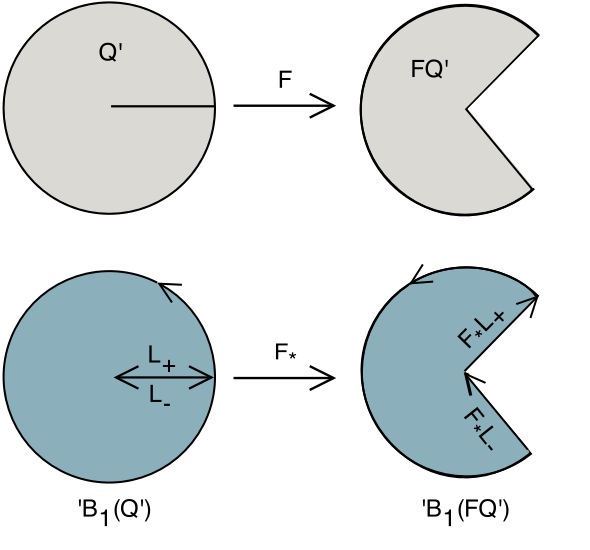}
\caption{A smooth map \( F \) that is not extendable to a neighborhood of \( \overline{Q'} \), and nevertheless determines a well-defined pushforward map \( F_*:\hB_1(Q') \to \hB_1(F(Q')) \)} \label{fig:Mapping} 
\end{figure}
                                                                         
 The classical definition of \emph{\textbf{pullback}} \( F^* \) satisfies the relation \( F^*\o (p;\a) = \o(F(p); F_* \a)= \o F_* (p;\a) \) for all differentiable maps \( F:U \to U' \), exterior forms \( \o \in (\A_k(U'))^* \), and simple \( k \)-elements \( (p;\a) \) with \( p \in U \).

\begin{defn}\label{def:Mr}  
	For \( r \ge 1 \), let \( {\cal M}^r(U,U') \) be the vector space of differentiable maps \( F:U \to U' \) whose coordinate functions \( F_i \) satisfy \( \p F_i/\p e_j \in B^{r-1}(U) \) for all \( j \).  Let \( {\cal M}^\i(U, U') \) be the vector space of differentiable maps \( F:U \to U' \) whose coordinate functions  \( F_i \) satisfy \( \p F_i/\p e_j \in B^{r-1}(U) \) for all \( j \) and all \( r \ge 1 \). 
\end{defn}

A map \( F \in {\cal M}^1(U, U') \) may not be bounded, but its directional derivatives must be. An important example is the identity map \( x \mapsto x \) which is an element of \( {\cal M}^1(\R^n,\R^n) \). Thus the space \( \B_0^r(U) \) is a proper subset of \( {\cal M}^r(U, \R) \). 

A \( 0 \)-form \( f_0 \in \B_0^r(U) \) trivially determines a map \( f(x):= f_0(x;1) \) in \( {\cal M}^r(U,\R) \), a multiplication operator \( m_f(p;\a) := (p; f(p)\a)  \) in the ring \( \B_0^r(U) \),   and a  linear map (pushforward) \( f_*: \hB_0^r(U)	 \to \hB_0^r(\R)) \) with \( f_*(p;\a) = (f(p); f_* \a) \).

\begin{defn}
For \( 1 \le r < \i \), define a seminorm on \( {\cal M}^r(U,U') \) by 
  \[ \rho_r(F)  := \max_{i,j} \{\|L_{e_j} F_{e_i}\|_{B^{r-1, U}}\}. \]
\end{defn} 
  
Endow  \( {\cal M}^r(U,U') \)  with the coarsest topology on \( {\cal M}^r(U,U') \) such that each map \( F \mapsto \rho_r(F-F_0) \) where \( F_0 \in {\cal M}^r(U,U') \) is continuous.  (This is similar to the compact-open topology, but this way we can avoid polynomial mappings beyond the identity.)   A base of neighborhoods of \( F_0 \in  {\cal M}^r(U,U') \) for this topology is obtained in the following way: for every \( \e > 0 \), let
    \[
    	U_{r,\e}(F_0) = \{F\in {\cal M}^r(U,U') \,:\, \rho_r(F - F_0) < \e \}.  
    \]
That the vector space operations are continuous in this topology follows from the definition of a seminorm. The resulting topological vector space is locally convex because each \( U_{r,\e}(0) \) is absolutely convex and absorbent.   Similarly, create a base of neighborhoods of  \( F_0 \in {\cal M}^\i(U,U') \) by using \( U_{r,\e}(F_0) \) for every \( r \ge 1 \) and every \( \e > 0 \).  
 
\begin{lem}\label{lem:Y}  
		If \( F \in {\cal M}^r(U,U') \) and \( 1 \le r < \i \),  then  \[ \|F^* \o\|_{B^{r,U}} \le n2^r \max \{ 1,\rho_r(F) \}\|\o\|_{B^{r,U'}} \] for all   \( \o \in \B_k^r(U') \).   
\end{lem}

\begin{proof}    
	For \( k = 0 \), the proof follows using the product and chain rules.  The general result follows by using coordinates.  (The actual constant \( n2^r \) which we obtained is not important, only that it is finite.)
\end{proof}	
  
\begin{thm}\label{thm:x}
 If \( F \in {\cal M}^r(U,U') \)  and \( 1 \le r < \i \), then
	\begin{equation}
		\|F_* A\|_{B^{r,U'}} \le n2^r \max \{ 1,\rho_r(F) \}\|A\|_{B^{r,U}}.
	\end{equation}
	for all \( A \in \A_k(U) \). 
\end{thm}

Suppose \( J \in \hB_k^r(U) \). Then \( J = \lim_{i \to \i} A_i \) for some \( A_i \in \A_k(U) \).  Since \( \{A_i\}_i \) is a Cauchy sequence, so is \( \{F_* A_i\}_i \) by Theorem \ref{thm:x}. Define \( F_*J := \lim_{i \to \i} F_* A_i \).  
   
\begin{thm}\label{thm:prop2}  
	Let   \( F \in {\cal M}^r(U,U') \), \(0 \le k \le n \), and \( r \ge 1 \).  The linear maps \(  F_*: \A_k(U) \to \A_k(U')  \) extend to continuous linear maps \( F_*:\hB_k^r(U) \to \hB_k^r(U') \) with 
	\[
		\|F_*J\|_{B^{r,U'}} \le n 2^r \max \{ 1,\rho_r(F) \} \|J\|_{B^{r,U}}.
	\] 
  Furthermore, for each \( 0 \le k \le n \), there exists a unique continuous operator \( F_*: \hB_k(U) \to \hB_k(U') \) which restricts to \( F_{k*} \) on each \( u_k^r(\hB_k^r(U))  \).
\end{thm}
 
\begin{proof}    Let \( J \in \hB_k^r(U) \).   By \eqref{basic}   and Lemma \ref{lem:Y},
\begin{align*}   
\|F_*J\|_{B^{r,U'}} &
= \sup_{0 \ne \o\in \B_k^r}\frac{\left| \cint_J F^* \o \right| }{\|\o\|_{B^r}} \\&
\le \sup_{0 \ne \o \in \B_k^r}\frac{ \|F^*\o\|_{B^{r,U}}}{\|\o\|_{B^r}} \|J\|_{B^{r,U}} \\&
\le C \max \{ 1, \rho_r(F) \} \|J\|_{B^{r,U}}.
\end{align*} 
If \( F(U) \subset U' \subset \R^m \), then \( F_*(\hB_k^r(U)) \subset \hB_k^r(U') \) and  \( F_*(H_k(U)) \subset H_k(U') \), so we may apply Theorem \ref{thm:continuousoperators} to uniquely extend the \( F_{k*} \) to \( F_*: \hB_k(U) \to \hB_k(U') \).
\end{proof}

\begin{example} 
	Suppose \( M \) is a smoothly embedded surface in \( \R^3 \).  
	Let \( F: M \to S^2 \) be the Gauss map.  Then \( F \) is as smooth as \( M \) and extends to a smooth map in a neighborhood \( U \) of \( M \).  Then \( F:U \to U' \) where \( U' \) is an open neighborhood of \( S^2  \) in \( \R^3 \).   The \emph{\textbf{shape operator}} is given by \( u \mapsto F_*u \) where \( u \in TM \).  Thus the shape operator is the pushforward operator \( F_* \) where \( F \) is the Gauss map. 
\end{example}

It is straightforward to see that if \( F \in {\cal M}^r(U_1, U_2),  G \in {\cal M}^r(U_2,U_3) \), then \( G_* \circ F_* = (G \circ F)_* \).

\begin{cor}[Change of variables]\label{cor:pull}  
	Suppose \( F \in {\cal M}^r(U,U') \) and \( 0 \le k \le n \). Then 
	\begin{equation}\label{eq:pushf}
				\cint_{F_*J} \o = \cint_J F^* \o 
	\end{equation} 
for all matching pairs \( J \in \hB_k^r(U) \), \( \o \in \B_k^r(U') \), and \( 1 \le r \le \i  \).
\end{cor}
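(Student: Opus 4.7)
The plan is to use the density of Dirac chains in $\hB_k^r(U)$ together with separate continuity of the integral pairing and the continuity of $F_*$ established in Theorem \ref{thm:prop2}.

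First I would verify the identity on Dirac chains directly from the definitions. For a simple $k$-element $(p;\a) \in \A_k(U)$, unwinding Definition \ref{push} and the definition of pullback gives
\[
\cint_{F_*(p;\a)} \o = \o(F(p); F_{p*}\a) = (F^*\o)(p;\a) = \cint_{(p;\a)} F^*\o,
\]
and this extends by linearity to arbitrary $A \in \A_k(U)$. So the identity holds on the dense subspace of Dirac chains.

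Next I would show both sides of \eqref{eq:pushf} are continuous functions of $J \in \hB_k^r(U)$. Given $J$, choose $A_i \to J$ in $\hB_k^r(U)$. By Theorem \ref{thm:prop2}, $F_*A_i \to F_*J$ in $\hB_k^r(U')$, and since $\o \in \B_k^r(U')$, Theorem \ref{thm:continuousint} (which, via Theorem \ref{thm:dualspaceopen}, identifies $\o$ with a continuous functional) yields $\cint_{F_*A_i}\o \to \cint_{F_*J}\o$. On the other side, Lemma \ref{lem:Y} guarantees $F^*\o \in \B_k^r(U)$, so separate continuity again gives $\cint_{A_i} F^*\o \to \cint_J F^*\o$. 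Passing to the limit in the identity already established on Dirac chains yields \eqref{eq:pushf} for $r$ finite.

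For $r = \i$, I would descend from the inductive limit: if $J \in \hB_k(U)$ then by definition $J = u_k^r(J^r)$ for some $J^r \in \hB_k^r(U)$ and some finite $r \geq 1$, while $\o \in \B_k(U) = \varprojlim \B_k^r(U)$ restricts to an element of $\B_k^r(U')$. Applying the finite-$r$ case to $J^r$ and using that the canonical inclusions commute with $F_*$ and $F^*$ (which is the content of Theorem \ref{thm:prop2}'s last sentence together with the dual projective structure) gives the result.

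I do not expect any serious obstacle: all the heavy analytic lifting — boundedness $\|F^*\o\|_{B^{r,U}} \leq n2^r \max\{1,\rho_r(F)\}\|\o\|_{B^{r,U'}}$, continuity of $F_*$, and the duality pairing being separately continuous — has been done in Lemma \ref{lem:Y}, Theorem \ref{thm:prop2}, and Theorem \ref{thm:continuousint}. The only mild care needed is checking that the pairing $\cint_{F_*J}\o = \cint_J F^*\o$ is genuinely the equality of two separately continuous bilinear forms that agree on the dense subspace $\A_k(U)$, at which point the corollary is immediate.
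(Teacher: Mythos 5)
Your proposal is correct and follows essentially the same route as the paper: the paper's one-line proof ("follows from Theorems \ref{thm:prop2} and \ref{lem:seminorm}") is exactly the argument you spell out, namely that $F^*\o(p;\a) = \o F_*(p;\a)$ holds on Dirac chains by definition of pullback, and the identity then extends by density of $\A_k(U)$ together with continuity of $F_*$ and the bound on $\|F^*\o\|_{B^{r,U}}$. You have simply made explicit the continuity bookkeeping (including the descent from the inductive limit for $r=\i$) that the paper leaves implicit.
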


\begin{proof} 
	This follows from Theorems \ref{thm:prop2} and \ref{thm:seminorm}. 
\end{proof}
  
 Compare \cite{lax1} \cite{lax2} which is widely considered to be the natural change of variables for multivariables.  The differential forms version of change of variables holds for \( C^1 \) diffeomorphisms \( F \) and domains of bounded open sets.  Our result presents a coordinate free version in arbitrary dimension and codimension \footnote{Whitney defined the pushforward operator \( F_* \) on polyhedral chains and extended it to sharp chains in \cite{whitney}. He proved a change of variables formula \eqref{eq:pushf} for Lipschitz forms. However, the important relation \( F_* \p = \p F_* \) does not hold for sharp chains since \( \p \) is not defined for the sharp norm. (See Proposition \ref{prop:whitneybd} below.) The flat norm of Whitney does have a continuous boundary operator, but flat forms are highly unstable. The following example modifies an example of Whitney found on p. 270 of \cite{whitney} which he used to show that components of flat forms may not be flat. But the same example shows that the flat norm has other problems. The author includes mention of these problems of the flat norm since they are not widely known, and she has seen more than one person devote years trying to develop calculus on fractals using the flat norm.   \textbf{Example}: In \( \R^2 \), let \( \o_t(x,y) =\begin{cases} e_1 + e_2 + tu, &x+y < 0\\ 0, & x+y > 0 \end{cases} \) where \( t \ge 0 \) and \( u \in \R^2 \) is nonzero. Then \( \o_0 \) is flat, but \( \o_t \) is not flat for any \( t > 0 \). In particular, setting \( t = 2, u = -e_2 \), we see that \( \star \o_0 \) is not flat.}.

\begin{prop}\label{pro:support2}  
 If \( F \in {\cal M}^r(U,U') \) is a closed map and  \( U \) is a bounded open set,  then \( \supp (F_*J) \subseteq F(\supp(J)) \subseteq U' \) for all \( J \in \hB_k^r(U) \) supported in \( U \).
\end{prop}

\begin{proof}  
 	Suppose \( \supp(J) \subset U \).    Suppose there exists \( p \in \supp(F_*J) \), but \( p\notin F(\supp(J)) \).  Let \( \O_\e(p) \) be an open ball disjoint from \( F(\supp(J)) \).  There exists \( \eta \)  supported in \( \O_\e(p) \) and \( \cint_{F_*J} \eta \ne 0 \).     Then \( U = F^{-1}(\O_e(p)) \) is disjoint from \( \supp(J) \).  Hence \( \cint_J F^* \eta = 0 \) since \( F^* \eta \) is supported in \( U \).   

\end{proof}      

\subsection{Algebraic chains} 
\label{sub:algebraic_chains}

 An \emph{\textbf{algebraic \( k \)-cell}} in an open set  \( U' \subseteq \R^n \) is a differential \( k \)-chain \( F_* \widetilde{Q} \) where \( Q \) is an affine \( k \)-cell contained in \( U \subseteq \R^n\) and the map \( F:U \to U' \) is an element of \( {\cal M}^r(U,U') \). We say that \( F_* \widetilde{Q} \) is \emph{non-degenerate} if \( F:Q \to U' \) is a diffeomorphism onto its image. An \emph{algebraic \( k \)-chain \( A \) in \( U' \)} is a finite sum of algebraic \( k \)-cells \( A = \sum_{i=1}^N a_i F_{i*} \widetilde{Q_i} \) where \( a_i \in \R \).   An algebraic \( k \)-chain \( A = \sum_{i=1}^m F_{i*}\widetilde{Q_i} \) is \emph{\textbf{non-overlapping }} if for each \( 1 \le i \le j \le m \) then \( F_i(Q_i) \) and \( F_j(Q_j) \) intersect at most along their boundaries.                   

 Singular cells are not the same as algebraic cells. A \emph{\textbf{singular \( k \)-cell}} is defined to be a map of a closed \( k \)-cell \( G: Q \to U \) and \( G \) might only be continuous. For example, let \( G: [-1,1] \to \R \) be given by \( G(x) = x \) if \( x \ge 0 \) and \( G(x) = -x \) if \( x \le 0 \). This singular \( 1 \)-cell is nonzero, but the algebraic cell \( G_*(\widetilde{[0,1]}) = 0 \). This problem of singular cells vanishes in homology, but some algebraic and geometric properties of  algebraic chains might be lost after passing to homology.   

\begin{examples} \mbox{}  
	\begin{enumerate}   
		\item  A \( k \)-chain \( \widetilde{M} \in \hB_k^r(U) \) \emph{represents} a \( k \)-submanifold \( M \) of class \( C^{r-1+Lip} \) in \( U \) if \( \cint_{\widetilde{M}} \o = \int_M \o \) for all forms \( \o \in \B_k^r(U) \).   It is not hard to piece together a non-overlapping algebraic \( k \)-chain which represents a compact \( k \)-submanifold.
 		 \item The quadrifolium and Boys surface can be represented by algebraic chains (see Figure \ref{fig:Quadrifolium}). 
\begin{figure}[ht] \centering \includegraphics[height=1.25in]{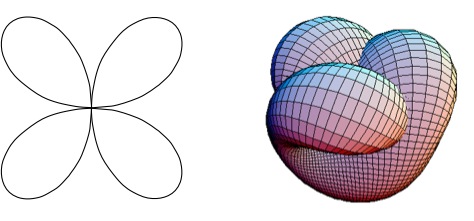} 
\caption{Quadrifolium } \label{fig:Quadrifolium} 
\end{figure}
			\item   Whitney stratified sets can be represented by algebraic chains since stratified sets can be triangulated \cite{goresky} (see Figure \ref{fig:WhitneyUmbrella}). 

				\begin{figure}[ht]
					\centering
						\includegraphics[height=1.5in]{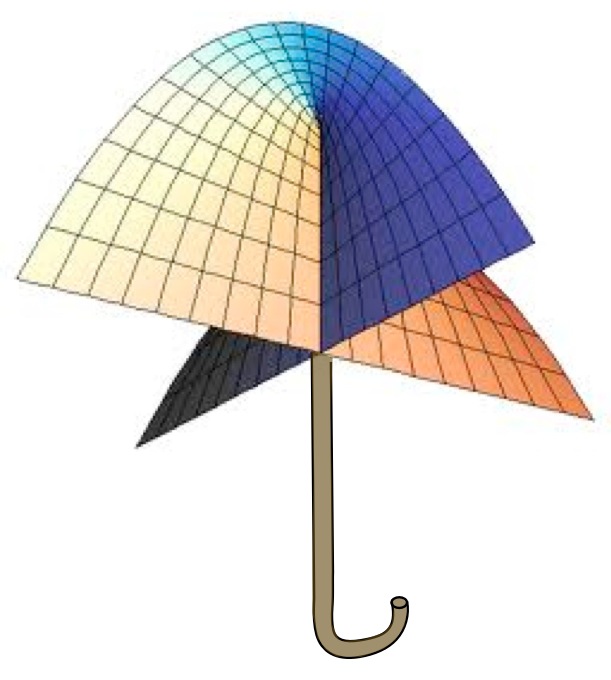}
					\caption{Whitney umbrella}
					\label{fig:WhitneyUmbrella}
				\end{figure}
 \end{enumerate} 
\end{examples}
   
\section{Primitive operators} 
\label{sec:primitive_operators}
  
In this section we define three ``primitive'' operators on the space \( \hB(U) \) of differential chains.  These operators are defined with respect to a vector field.

\subsection{Vector fields}\label{vectorfields}
 
Let \( V:U \to \R^n \)  be a vector field on \( U \).  Recall \( V^\flat \) is the differential \( 1 \)-form associated to \( V \) by way of the inner product\footnote{The choice of inner product on \( \R^n \) has no significant influence on the theory.  The spaces are independent of the choice, as the norms are comparable.}.         Let \( {\cal V}^r(U) \) be the Banach space of vector fields \( V \) defined on \( U \) such that \( V^\flat\in \B_1^r(U) \), equipped with the norm \( \|V\|_{B^r}:=\|V^\flat\|_{B^r} \).  It follows that the projective limit \( {\cal V}(U) = {\cal V}^\i(U) := \varprojlim {\cal V}^r(U) \), endowed with the projective limit topology, is a Fr\'echet space. 
 
\begin{lem}\label{lem:vectornorm} 
  \( V \in  {\cal V}^r(U) \) if and only if  each coordinate function \( f_i  \) satisfies \( \|f_i\|_{B^{r}} \le \|V^\flat\|_{B^{r}} \) for all \( 0 \le r <\i \).  
\end{lem} 

\begin{proof} 
The proof is immediate from the definitions.     
\end{proof}            

\subsection{Extrusion} \label{ssub:extrusion} 

\begin{defn} \label{extrusionX}
  Define the bilinear map, called  \emph{\textbf{extrusion}},  defined by its action on \( k \)-elements: 
\begin{align*}
E: {\cal V}^r(U) \times \A_k(U) &\to \A_{k+1}(U) \\ (V, (p;\a))  &\mapsto (p; V(p) \wedge \a).
\end{align*}  
Let \( E_V(p;\a) := E(V, (p;\a)) \). 
\end{defn}
 It follows that  \( i_V \o (p;\a) = \o E_V (p;\a) \) where \( i_V \) is classical interior product. 

\begin{thm}\label{thm:IIAV} 
	Let \(0 \le k \le n-1 \) and \( 1 \le r < \i \).   The bilinear map \( E = E_k^r: {\cal V}^r(U) \times \A_k(U) \to \A_{k+1}(U) \) extends to a continuous bilinear map \( E = E_k^r: {\cal V}^r(U) \times \hB_k^r(U) \to \hB_{k+1}^r(U) \) with  \[ \|E(V,J) \|_{B^{{r}}} \le n^2 2^r\|V^\flat\|_{B^{r}}\|J\|_{B^{{r}}} \mbox{ for all }  J \in  \hB_k^r(U). \] 
  Furthermore, for each \( 0 \le k \le n-1 \), there exists a separately continuous bilinear map \( E = E_k:{\cal V}^\i(U) \times \hB_k(U) \to \hB_k(U) \) which restricts to \( E_k^r \) on each \( \nu_k^r({\cal V}^r(U)) \times u_k^r(\hB_k^r(U)) \).  
\end{thm}  
 
\begin{proof} 
Let \( T(p;\a) =  E_u (p;\a) \) where \( u \) is a unit vector in \( \R^n \).  . Then
	\begin{align*} 
		\|T(\D_{\s^j} (p;\a))\|_{B^r} &= \| \D_{\s^j} T(p;\a) \|_{B^r} \\
		&= \|\D_{\s^j} (p; u \wedge \a) \|_{B^r} \le \|\s\|\|u\|\|(p;\a)\|_{r-j} \le \|u\|\|\s\|\|\a\|.
	\end{align*}
	By Corollary \ref{cor:opr} it follows that \( \|E_v(A)\|_{B^r} \le \|v\|\|A\|_{B^r} \). 
	 
  Since \( V \in {\cal V}^r(U) \) we know \( V = \sum f_i e_i \) where \( f_i \in \B_0^r(U) \). Lemma \ref{lem:extX} readily extends to \( f_i \in \B_0^r(U) \), and thus \( E_V = \sum_{i=1}^n E_{f_i e_i} = \sum_{i=1}^n m_{f_i} E_{e_i} \).
By Theorem \ref{thm:mf} \( \|m_{f_i} A\|_{B^{r}} \le n2^r \|f_i\|_{B^{r}} \|A\|_{B^{r}} \). Therefore, by Lemma  \ref{lem:vectornorm}, 
\begin{align*} 
	\|E_V A\|_{B^{r}} \le \sum_{i=1}^{n}
\|m_{f_i} E_{e_i} A\|_{B^{r}} \le n 2^r \sum_{i=1}^{n} \|f_i\|_{B^{r}} \|E_{e_i} A\|_{B^{r}}&\le n2^r \sum_{i=1}^{n} \|f_i\|_{B^{r}} \|A\|_{B^{r}} \\&\le n^2 2^r
\|V^\flat\|_{B^{r}}\|A\|_{B^{r}}. 
\end{align*} 
Define \( E_V(J) := \lim_{i \to \i} E_V(A_i) \) for \( J \in \hB(U) \)  (see Figure \ref{fig:Extrusion}).  It follows that  \( \|E(V,J) \|_{B^{{r}}} \le n^2r\|V^\flat\|_{B^{r}}\|J\|_{B^{{r}}} \). 

For the inductive limit,  first fix \( V \in {\cal V}^\i(U) \).  Since \( E_V(\hB_k^r(U)) \subset \hB_{k+1}^r(U) \) and \( E_V(H_k(U) \subset H_{k+1}(U))  \), we can apply Theorem \ref{thm:continuousoperators} to extend \( E_V:\hB_k^r(U) \to \hB_{k+1}^r(U) \) to a continuous linear map \( E_V:\hB_k(U) \to \hB_{k+1}(U) \).  Define \( E:{\cal V}^\i(U) \times \hB_k(U) \to \hB_k(U) \) by \(  E(V,J) := E_V (J) \).  This is well-defined since \( V \in {\cal V}^\i(U) \) implies \( V \in {\cal V}^r(U)  \). This establishes continuity in the second variable.

   Continuity in the first variable:    
   Suppose \( J \in \hB_k(U) \). Then there exists \( r \ge 0 \) and \( J^r \in \hB_k^r(U) \) such that \( u_k^r(J^r) = J \). Suppose \( V_i \to 0 \) in \( {\cal V}^\i(U) \).  Since the inclusion \( {\cal V}^\i(U) \to {\cal V}^{r+1}(U) \) is the identity map and continuous,  then \( V_i \to 0 \) in \( {\cal V}^r(U) \).   Hence \( E(V_i,J) \le n^2r \|V_i^\flat\|_{B^{r}}\|J\|_{B^{{r}}} \to 0   \).   
 \end{proof}  

\begin{thm}[Change of dimension I]\label{thm:extV}
	Let \(0 \le k \le n-1 \). Then
	\begin{equation}\label{primitiveext} \cint_{E_V J} \o = \cint_J i_V \o
	\end{equation} for all matching triples  \( V \in {\cal V}^r(U) \),   \( J \in \hB_k^r(U) \), \( \o \in \B_{k+1}^r(U) \),  and \( 1 \le r \le \i  \).
\end{thm} 

\begin{proof} 
 This follows since \( \o(E_V (p;\a)) = i_V \o (p;\a)) \) and by continuity of \( E_V \) and \( i_V \). 
\end{proof}
 
If \( S \) and \( T \) are operators, let \( [S,T] = ST - TS \) and \( \{S, T\} = ST + TS \).  The following relations follow directly from the definitions on \( k \)-elements and continuity.

\begin{prop}\label{prop:EE} 
	Let \( V,W \in {\cal V}^r(U) \). Then 
	\begin{enumerate} 
		\item \( E_V^2= 0 \); 
		\item \( \{E_V, E_W\} = 0 \);
		\item \label{lem:extX}\( E_{fV} = m_f E_V \) for all \( V \in  {\cal V}^r(U) \) and \( f \in \B_0^r(U), r \ge 1\). 
	\end{enumerate} 
\end{prop}      
 
\begin{figure}[ht] 
	\centering 
	\includegraphics[height=1.5in]{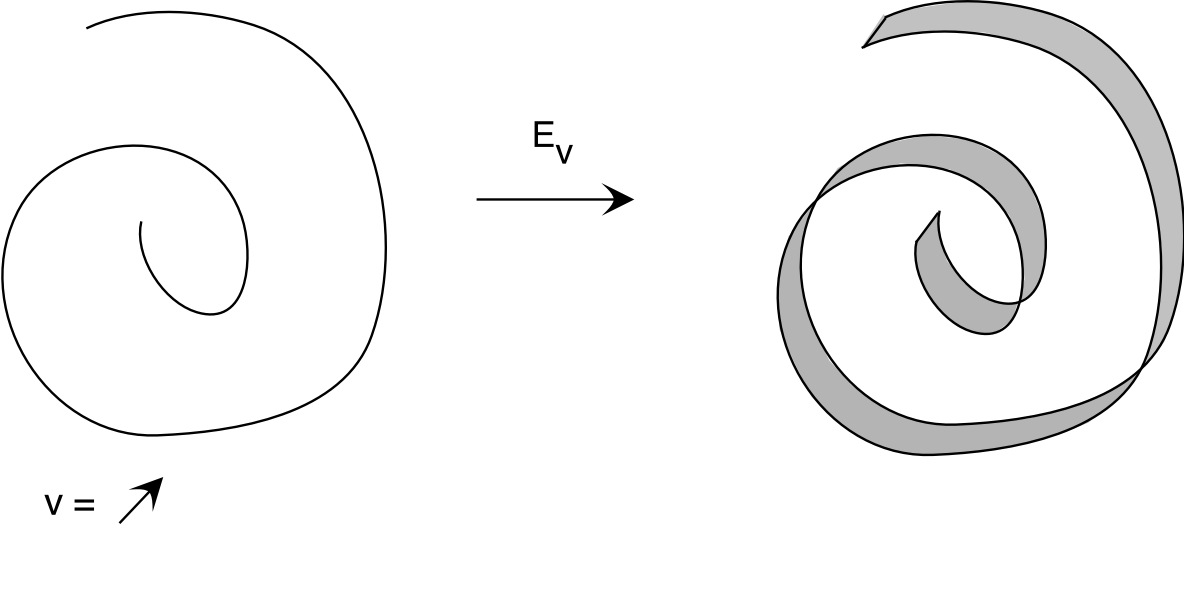}
	\caption{Extrusion of a curve through the vector field \( \frac{\p}{\p x} + \frac{\p}{\p y} \)} 
	\label{fig:Extrusion} 
\end{figure}
       
\subsection{Retraction} 
\label{sub:retraction}  
 
 Let \( 1 \le k \le n \) and  \( v \in V = \R^n \). Define \( M_v(x_1, \dots, x_k)  : = \sum_{i=1}^k (-1)^{i+1} \<v,x_i\>(x_1 \wedge \dots \wedge \hat{x}_i \wedge \dots \wedge x_k)  \). The following lemma is a standard result in multilinear algebra:
	    \begin{lem}\label{lem:Mv}  \( M_v \) extends to a multilinear alternating map
	     \( M_v: V^{\times k} \to \L_{k-1}(V) \).
	    \end{lem}
		
		\begin{prop}\label{prop:Mv}
		There exists a unique linear map  \( E_v^\dagger: \L_k(V) \to \L_{k-1}(V) \) satisfying  \[ E_v^\dagger(x_1 \wedge \cdots \wedge x_k) = \sum_{i=1}^k (-1)^{i+1} \<v,x_i\>(x_1 \wedge \dots \wedge \hat{x}_i \wedge \dots \wedge x_k)  \] for all  \( x_1, \dots, x_k \in V \).
		\end{prop}

		\begin{proof} 
		This follows from the universal property of exterior product (see, e.g., \cite{linearalgebra})
		\end{proof}
		
		The operator  \( E_v^\dagger \) extends to Dirac chains and is bounded since  \( E_v^\dagger(p;\a) \le \|v\|\|\a\| \).


\begin{defn}\label{retractionX}
	 Define the bilinear map, called  \emph{\textbf{retraction}}, by its action on simple \( k \)-elements 
	\begin{align*}
	E^\dagger: {\cal V}^r(U) \times \A_{k+1}(U) &\to \A_k(U) \\	(p; v_1 \wedge \cdots \wedge v_{k+1})
		&\mapsto \sum_{i=1}^{k+1} (-1)^{i+1} \<V(p),v_i\> (p; v_1 \wedge \cdots \wedge
		\widehat{v_i} \wedge \cdots \wedge v_{k+1}).
	\end{align*}  Let \( E_V^\dagger(p;\a) := E^\dagger(V, (p;\a))   = \sum_{i=1}^{k+1} (-1)^{i+1}\<V(p),v_i\> (p; v_1 \wedge \cdots \hat{v_i} \cdots \wedge v_k). \)
\end{defn}       

	\begin{lem}\label{lem:evdag}
		The operator dual to \( E_V^\dagger \) is \( V^\flat \wedge \cdot \).
	\end{lem}

	\begin{proof} 
		It is enough to show that \( \o E_v^\dagger (p;\a) = v^\flat \wedge \o (p;\a) \) for all \( k \)-elements \( (p;\a) \) and all \( v \in \R^n \). This follows from the standard formula of wedge product of differential forms:  \[ \o E_v^\dagger (p;\a) = \sum_{i=1}^n (-1)^{i+1} \o(p; \<v,v_i\>\a_i) = v^\flat \wedge \o (p;\a). \] 
	\end{proof}               
                       
\begin{thm}\label{thm:retX} 
	Let \(1 \le k \le n\) and \( r \ge 1 \).  The bilinear map \( E^\dagger =( E^\dagger)_k^r: {\cal V}^r(U) \times \A_k(U) \to \A_{k-1}(U) \) extends to a continuous bilinear map 
\begin{align*} E^\dagger = (E^\dagger)_k^r: {\cal V}^r(U) \times \hB_k^r(U) &\to \hB_{k-1}^r(U)  \end{align*} 
  with \[ \|E^\dagger(V, J)\|_{B^{r}} \le k {n \choose k} \|V^\flat\|_{B^{r}} \|J\|_{B^{r}}. \] Furthermore, for each \( 1 \le k \le n \), there exists a continuous bilinear map \( E^\dagger = E_k^\dagger:{\cal V}^\i(U) \times \hB_k(U) \to \hB_{k-1}(U) \) which restricts to \( (E^\dagger)_k^r \) on each \( u_k^r(\hB_k^r(U)) \). 
\end{thm}   

\begin{proof} 
	
		Let \( T \in {\cal L}(\A) \) be the operator determined by \( T(p;\a) = E_v^\dagger  (p;\a)/\|v\| \) for all \( k \)-elements \( (p;\a) \). By Corollary \ref{cor:opr} it suffices to show that \( \|T(\D_{\s^j}(p;\a))\|_{B^r} \le C \|\s\|\|\a\| \) for all \( 0 \le j \le r \). This follows since \[ \|T(\D_{\s^j}(p;\a))\|_{B^r} = \|E_v^\dagger(\D_{\s^j}(p;\a)/\|v\|\|_{B^r} \le \sum_{i=1}^k |(-1)^{i+1} \<v,v_i\> \D_{\s^j} (p; \hat{\a}_i) |_{B^j} \le k \|\s\|\|\a\| \] using \( \|\<u,v_i\>\hat{\a}_i\| \le \|\a\| \).  Hence  \(  \|E_v^\dagger A\|_{B^r} \le k \|v\|\|A\|_{B^r}   \).


	 Since \( V \in {\cal V}^r(U) \) we know \( V = \sum f_i e_i \) where \( f_i \in \B_0^r(U) \). Lemma \ref{lem:extX} readily extends to \( f_i \in \B_0^r(U) \), and thus \( E_V^\dagger = \sum_{i=1}^n E_{f_i e_i}^\dagger = \sum_{i=1}^n m_{f_i} E_{e_i}^\dagger \).
	By Theorem \ref{thm:mf} \( \|m_{f_i} A\|_{B^{r}} \le n2^r \|f_i\|_{B^{r}} \|A\|_{B^{r}} \). Therefore, by Lemma  \ref{lem:vectornorm}, 
	\begin{align*} 
		\|E_V^\dagger A\|_{B^{r}} \le \sum_{i=1}^{n}
	\|m_{f_i} E_{e_i}^\dagger A\|_{B^{r}} \le n 2^r \sum_{i=1}^{n} \|f_i\|_{B^{r}} \|E_{e_i}^\dagger A\|_{B^{r}}&\le n2^r  \sum_{i=1}^{n} \|f_i\|_{B^{r}}k {n \choose k} \|A\|_{B^{r}} \\&\le n^2 2^r k {n \choose k}
	\|V^\flat\|_{B^{r}}\|A\|_{B^{r}}. 
	\end{align*} 
	Define \( E^\dagger(V,J) = E_V^\dagger(J) = \lim_{i \to \i} E_V^\dagger(A_i) \) for \( J \in \hB(U) \) (see Fig. \ref{fig:Retraction}).  It follows that  \( \|E^\dagger(V,J) \|_{B^r} \le n^2rk {n \choose k}\|V^\flat\|_{B^{r}}\|J\|_{B^{{r}}} \). 

	For the inductive limit,  first fix \( V \in {\cal V}^\i(U) \).  Since \( E_V^\dagger(\hB_k^r(U)) \subset \hB_{k+1}^r(U) \) and \( E_V^\dagger(H_k(U) \subset H_{k+1}(U))  \),  we can apply Theorem \ref{thm:continuousoperators} to extend \( E_V^\dagger:\hB_k^r(U) \to \hB_{k-1}^r(U) \) to a continuous linear map \( E_V^\dagger:\hB_k(U) \to \hB_{k-1}(U) \).     This establishes continuity in the second variable.

	   Continuity in the first variable:    
	   Suppose \( J \in \hB_k(U) \). Then there exists \( r \ge 0 \) and \( J^r \in \hB_k^r(U) \) such that \( u_k^r(J^r) = J \). Suppose \( V_i \to 0 \) in \( {\cal V}^\i(U) \).  Since the inclusion \( {\cal V}^\i(U) \to {\cal V}^{r+1}(U) \) is the identity map and continuous,  then \( V_i \to 0 \) in \( {\cal V}^r(U) \).  Hence \( E^\dagger(V_i,J) \le n^2rk {n \choose k}\|V_i^\flat\|_{B^{r}}\|J\|_{B^{{r}}} \to 0   \).

\end{proof}

\begin{thm}[Change of dimension II]\label{thm:retXint} 
	Let \(1 \le k \le n\). Then
	\begin{equation}\label{primitiveretractV} 
		\cint_{E_V^\dagger J} \o = \cint_J V^\flat \wedge \o
	\end{equation} 
for all matching triples  \( V \in {\cal V}^r(U) \),   \( J \in \hB_k^r(U) \), \( \o \in \B_{k-1}^r(U) \),  and \( 1 \le r \le \i  \).
\end{thm}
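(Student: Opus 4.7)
The plan is to reduce the statement to an identity on Dirac chains, which is essentially algebraic, and then extend by continuity, mirroring the proof of Theorem \ref{cor:extV} (Change of dimension I).

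First, I would verify the equality on a simple $k$-element $(p;\alpha) \in \A_k(U)$. By the very definition of $E_V^\dagger$ and Lemma \ref{lem:daggerE}, we have
\[
\omega\bigl(E_V^\dagger(p;\alpha)\bigr) \;=\; \sum_{i=1}^k (-1)^{i+1}\langle V(p), v_i\rangle\, \omega(p;\hat{\alpha}_i) \;=\; (V^\flat \wedge \omega)(p;\alpha),
\]
where $\alpha = v_1 \wedge \cdots \wedge v_k$. Extending by linearity, $\omega(E_V^\dagger A) = (V^\flat \wedge \omega)(A)$ for every Dirac chain $A \in \A_k(U)$, which is exactly the desired integral identity on the dense subspace $\A_k(U) \subset \hB_k^r(U)$.

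Next I would pass to the limit. For $1 \le r < \infty$, fix $V \in {\cal V}^r(U)$ and $\omega \in \B_{k-1}^r(U)$. By Theorem \ref{thm:retX}, the linear map $E_V^\dagger: \hB_k^r(U) \to \hB_{k-1}^r(U)$ is continuous, so $J \mapsto \cint_{E_V^\dagger J} \omega$ is continuous on $\hB_k^r(U)$ via Theorem \ref{thm:continuousint}. On the other hand, by Lemma \ref{lem:daggerE}, $V^\flat \wedge \omega \in \B_k^r(U)$, so $J \mapsto \cint_J V^\flat \wedge \omega$ is also continuous on $\hB_k^r(U)$ by Theorem \ref{thm:continuousint}. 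Given $J \in \hB_k^r(U)$, choose $A_i \to J$ with $A_i \in \A_k(U)$; then
\[
\cint_{E_V^\dagger J} \omega \;=\; \lim_{i\to\infty} \cint_{E_V^\dagger A_i} \omega \;=\; \lim_{i\to\infty} \cint_{A_i} V^\flat \wedge \omega \;=\; \cint_J V^\flat \wedge \omega.
\]

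Finally, for $r = \infty$, the extension $E^\dagger: {\cal V}^\infty(U) \times \hB_k(U) \to \hB_{k-1}(U)$ from Theorem \ref{thm:retX} is compatible with the canonical inclusions $u_k^r$, and $V^\flat \wedge(\cdot)$ maps $\B_{k-1}(U) \to \B_k(U)$ continuously, so the same density argument (together with the continuity of the integral pairing in the inductive limit topology via Theorem \ref{thm:isotW} and Theorem \ref{thm:continuousint}) yields the identity in this case. The main technical content is actually contained in Theorem \ref{thm:retX}, which provides the continuity of $E_V^\dagger$; once that is in hand the present theorem is essentially a formal consequence of duality, so I do not anticipate any genuine obstacle in writing out this argument.
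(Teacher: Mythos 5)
Your proposal is correct and follows essentially the same route as the paper: the paper proves this by noting the identity holds on Dirac chains (where it is the pointwise computation $\o E_V^\dagger(p;\a) = (V^\flat\wedge\o)(p;\a)$ of Lemma \ref{lem:daggerE}) and then extending by density and separate continuity of the integral pairing, with the continuity of $E_V^\dagger$ supplied by Theorem \ref{thm:retX}. You have merely written out in full the steps the paper compresses into ``similar to the proof of Corollary \ref{cor:daggerE}.''
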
 

The proof is similar to that of Corollary \ref{thm:extV}.    

\begin{figure}[ht] 
	\centering
	\includegraphics[height=2in]{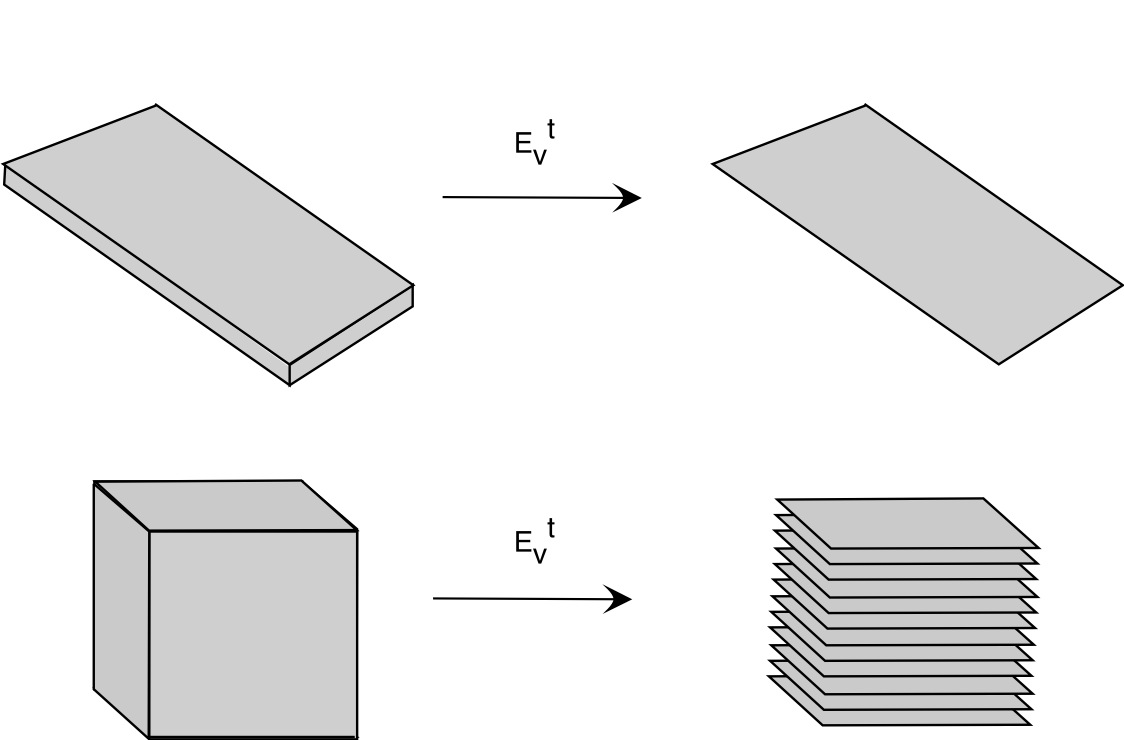} 
	\caption{Retraction of an extruded rectangle and a unit cube} 
	\label{fig:Retraction} 
\end{figure}
    Recall the inner product \( \<\cdot, \cdot\>_\wedge \) in \S\ref{sub:mass_norm}.

\begin{prop}\label{pro:asgg}
	The following relations hold  for all \( V, W  \in {\cal V}^r(U) \) and \( f \in \B_0^r(U), r \ge 1\):
	\begin{enumerate} 
  	\item \( \< E_V (p;\a), (p;\b) \>_\wedge = \< (p;\a), E_V^\dagger(p;\b) \>_\wedge \) for all \( k \)-vectors \( \b \) and \( (k-1) \)-vectors \( \a \);
		\item \( E_V^\dagger \circ E_V^\dagger = 0 \);
		\item \( [E_V^\dagger, E_W^\dagger] = 0 \); 
		\item \label{car} \( \{E_V^\dagger, E_W\} = \{E_V, E_W^\dagger\} = \<V,W\> Id \);
		\item \( (E_V + E_V^\dagger)^2 = \<V,V\> Id \);  
		\item \( [m_f, E_V] = [ m_f,E_V^\dagger] =   0 \);
		\item\label{lem:extXd} \( E_{fV}^\dagger = m_f E_V^\dagger \);
	\end{enumerate}
\end{prop} 

\begin{proof}  
  Property (a) comes from the well-established result that \( E_V^\dagger \) is the adjoint to \( E_V \) as operators on the exterior algebra.  The relation can be verified directly by writing everything in terms of an orthonormal basis \( \{e_1, \dots, e_n\} \).  Here is the rough idea: We may assume \( \a = e_I = e_1 \wedge \cdots \wedge e_{k-1} \) and   \( V = e_k \).  For if \( V = e_i, 1 \le i \le k-1 \), then both sides are zero.   We may also assume \( \b = e_k \wedge e_I \), else both sides are again zero.   But if \( \b = e_k \wedge e_I \), then both sides are one.   The general result follows using bilinearity of the inner product.  

(b)-(f) follow directly from the definitions on \( k \)-elements and continuity.

(g):   
\begin{align*} 
E_{fV}^\dagger (p; \a) = \sum_{i=1}^{k+1} (-1)^{i+1} \<f(p)V(p),e_i\> (p; \widehat{\a_i}) = f(p) \sum_{i=1}^{k+1} (-1)^{i+1} \<V(p),e_i\> (p; \hat{\a_i}) = m_f E_V^\dagger (p;\a).
\end{align*}    	
\end{proof} 

 
\subsection{Prederivative} 
\label{sub:prederivative}  
	Prederivative gives us a way to ``geometrically differentiate'' a differential chain in the infinitesimal directions determined by a vector field, and without use of test forms. While the operator extrusion \( E_V \) is an operator determined by a vector field \( V \), prederivative \( P_V \) is an operator determined by the tensor field \( V \otimes 1 \) (see \S\ref{sub:chainlets}).   We begin with a constant vector field \( v \).

 \begin{lem}\label{lem:pred} 
	Let \( (p;\a) \) be a simple \( k \)-element and \( v \in \R^n \). Then the sequence \( \{(T_{2^{-i}v} -I)(p; 2^i\a)\}_{i \ge 1} \) is Cauchy in \( \hB_k^2(U) \).
\end{lem}  

\begin{proof}  For simplicity of notation, set \( p = 0 \).
  Telescoping gives us 
	\begin{equation}\label{equation1} 
		( 2^{-i}v;\a) - (0;\a) = \sum_{m=1}^{2^j} (m2^{-i+j}v; \a) - ((m-1)2^{-i+j}v; \a). 
	\end{equation} 
	Expanding, we have \[ (T_{2^{-i}v} - Id)(0;2^i\a) - (T_{2^{-(i+j)}v} - Id)(0;2^{i+j}\a) = ((2^{-i}v; 2^i\a) - (0;2^i\a )) - ((2^{-(i+j)}v; 2^{i+j}\a) -(0; 2^{i+j}\a )). \] We apply \eqref{equation1} to the first pair, and consider the second pair as \( 2^j \) copies of \( ((2^{-(i+j)}v; 2^i\a) -(0; 2^i\a )) \). Then
	\begin{align*} 
		(T_{2^{-i}v} &- Id)(0;2^i\a) - (T_{2^{-(i+j)}v} - Id)(0;2^{i+j}\a) \\
		&= \sum_{m=1}^{2^j} (m2^{-(i+j)}v; 2^i\a) - ((m-1)2^{-(i+j)}v; 2^i\a) - ((2^{-(i+j)}v; 2^i\a) -(0; 2^i\a)).
	\end{align*}

	Rewrite the right hand side as a sum of \( 2 \)-difference chains and set \( v = 2^{-(i+j)}v \).  Then
	\begin{align}\label{eqn2} 
		(T_{2^{-i}v} - Id)(0;2^i\a) - (T_{2^{-(i+j)}v} - Id)(0;2^{i+j}\a) = \sum_{m=1}^{2^j} \D_{(v,(m-1)v)}(0; 2^i\a).
	\end{align}
                           
	We then use the triangle inequality to deduce 
	\begin{align*} 
		\| (T_{2^{-i}v} - Id)(0;2^i\a) - (T_{2^{-(i+j)}v} - Id)(0;2^{i+j}\a)\|_{B^2}  &\le \sum_{m=1}^{2^j} \|\D_{(v, (m-1)v)}(0; 2^i\a) \|_{B^2} \\  & \le  \sum_{m=1}^{2^j} (m-1)\|v\|^2(m-1)2^i\|\a\| \\&= \|v\|^2 2^{-2(i+j)}2^i\|\a\| 2^{2j}/2  
		   \le  2^{-i} \|v\|^2 \|\a\|.
	\end{align*}
\end{proof}        
 
\begin{defn}\label{prederivative}
	Define the differential chain \( P_v (p;\a) := \lim_{i \to \i} (T_{iv} -I)_*(p;\a/i) \) and extend to a linear map of Dirac chains \( P_v: \A_k(U) \to \hB_k^2(U) \) by linearity.  Let \( P: \R^n \times \A_k(U) \to \hB_k^2(U) \) be the resulting bilinear map: \( P(v, (p;\a)) := P_v (p;\a) \) called \emph{\textbf{prederivative}}. Using the natural inclusion  \(u_k^{2,r}: \hB_k^2(U) \to \hB_k^r(U) \), we have \(u_k^{2,r} \circ P_v: \A_k(U) \to \hB_k^r(U) \), but we usually write  \( P_v:\A_k(U) \to \hB_k^r(U) \) instead of \( u_k^{2,r} \circ P_v:\A_k(U) \to \hB_k^r(U) \).  
\end{defn} 

\begin{lem}\label{lem:preext}
For each \( v \in \R^n \) and \( r \ge 1 \), the  map \( P_v: \A_k(U) \to \hB_k^{r+1}(U) \) satisfies 
	\begin{enumerate}
	\item \( \|P_v(A)\|_{B^{r+1}} \le \|v\|\|A\|_{B^r} \) for all \( A \in \A_k(U) \);
	\item \(  P_v(H_k) \subset H_k \).  
	\end{enumerate}
\end{lem}
 
\begin{proof} 
(a):  Since \( P_v \) commutes with translation, and \( P_v \) is defined as a limit of \( 1 \)-difference chains, we have 

	\begin{align*} 
		\|P_v(\D_{\s^j}(p;\a))\|_{B^{j+1}} = \lim_{t\to 0} \| \D_{tv}(\D_{\s^j}(p;\a/t))\|_{B^{j+1}}  \le \lim_{t \to 0}| \D_{tv}(\D_{\s^j}(p;\a/t))|_{B^{j+1}}   \le \|v\| \|\s\|\|\a\|.
	\end{align*}

	Let \( A \in \A_k(U) \) be a Dirac chain, \( r \ge 0 \), and \( \e > 0 \). We can write \( A = \sum_{i=1}^m \D_{\s_i^{j(i)}}(p_i;\a_i) \) as in the proof of Lemma \ref{lem:ineq}, with \( \|A\|_{B^r} > \sum_{i=1}^m \|\s_i\|\|\a_i\| - \e \). Then 
	\begin{align*} 
		\|P_v A\|_{B^{r+1}} \le \sum_{i=1}^m \|P_v \D_{\s_i^{j(i)}}(p_i;\a_i)\|_{B^{r+1}}  \le \|v\| \sum_{i=1}^m \|\s_i\|\|\a_i\|  < \|v\|( \|A\|_{B^r} + \e). 
	\end{align*} 
	(b): This follows since \( P_v \circ u_k^{r,s} = u_k^{r,s}\circ P_v \).  
\end{proof} 

\begin{defn}
	Define \( P_v:\hB_k^r(U) \to \hB_k^{r+1}(U) \) as follows: If \( J \in \hB_k^r(U) \), choose \( A_i \to J \) in the \( B^r  \) norm and define \( P_v(J) := \lim_{i \to \i} P_v(A_i) \) (see Fig \ref{fig:Prederivative}).  
\end{defn}
 
\begin{figure}[htbp] 
	\centering 
	\includegraphics[height=2.5in]{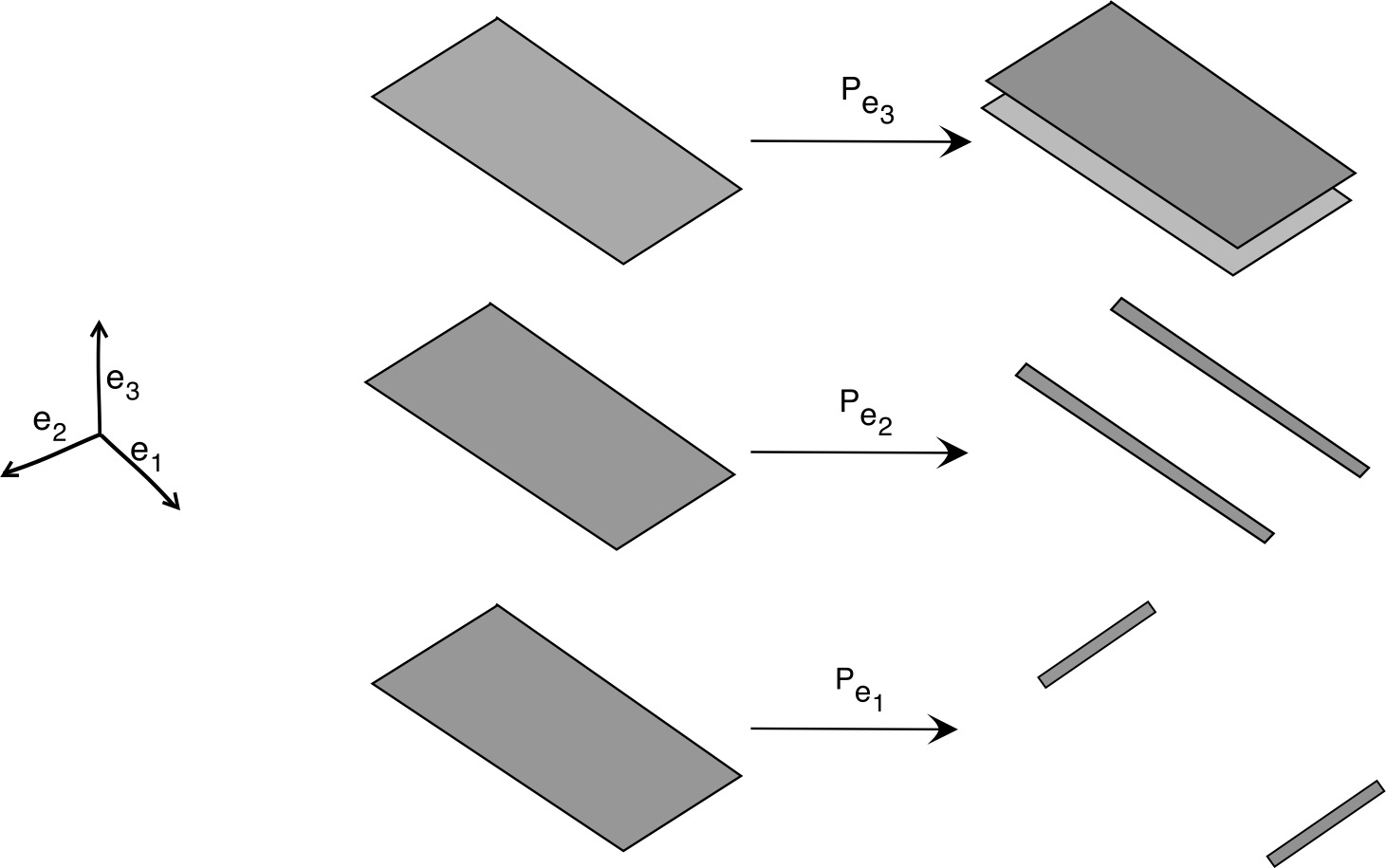}
	\caption{Prederivative of a \( 2 \)-cell in \( \R^3 \) in three different directions}
	\label{fig:Prederivative} 
\end{figure}            

 We deduce 
\begin{thm}\label{thm:IIC} 
	The map \( P:\R^n \times \hB_k^r(U) \to \hB_k^{r+1}(U) \) is bilinear and satisfies \(  \|P_v(J)\|_{B^{r+1}} \le \|v\|\|J\|_{B^r} \)	for all \( J \in \hB_k^r(U) \).   
\end{thm}                       
     
\begin{proof}  
The inequality holds by Lemma \ref{lem:preext}. We show that \( P \) determines a bilinear map on Dirac chains: It is linear in the second variable by definition. Additivity in the first variable reduces to showing \[ \lim_{t \to 0} (p + t(v_1 + v_2); \a/t) - (p + tv_1; \a/t) - (p + tv_2; \a/t) + (p; \a/t) = 0 \] in \( \hB_k(U) \). This follows from the definition of the \( B^2 \) norm of a \( 2 \)-difference chain: \(\|(p + t(v_1 + v_2); \a/t) - (p + tv_1; \a/t) - (p + tv_2; \a/t) + (p; \a/t)\|_{B^2} \le |(p + t(v_1 + v_2); \a/t) - (p + tv_1; \a/t) - (p + tv_2; \a/t) + (p; \a/t)|_2 \le t\|v_1\|\|v_2\|\|\a\| \). Homogeneity is immediate since \( \l(p;\a) = (p;\l \a) \). 
\end{proof}

In particular, Theorem \ref{thm:IIC} shows that \( P_v \in {\cal L}(\hB) \) is a continuous bigraded operator.

\begin{prop}[Commutation relations]\label{prop:ppp}
	\( [P_v,P_w] = [E_v, P_w] = [E_v^\dagger, P_w]= 0 \) for all \( v, w\in \R^n \).
\end{prop}

\begin{prop}\label{prop:mfpv} 
	Let \( f \in \B_0^r(U), 0 \le r \le \i \),  and \( v \in \R^n \). Then \( [m_f, P_v] = m_{L_v f} \). 
\end{prop}

\begin{proof} 
  By the mean value theorem there exists \( q_t = p+stv, 0 \le s \le 1 \) such that \( \frac{f(p+tv) - f(p)}{t} = L_vf(q_t) \). Then
	\begin{align*} 
		m_f P_v (p;\a) = m_f \lim_{t \to 0} (p+tv; \a/t) - (p;\a/t) & = \lim_{t \to 0} (p+tv; f(p+tv)\a/t) - (p; f(p) \a/t) \\
		& = \lim_{t\to 0} (p+tv; (f(p) + t L_v f(q_t)) \a/t) - (p; f(p) \a/t) \\
		& = \lim_{t \to 0} (p+tv; f(p) \a/t) - (p; f(p) \a/t) + \lim_{t \to 0} ( p; L_v f(q_t) \a ) \\
		& = P_v m_f (p;\a) + m_{L_v f} (p;\a).
	\end{align*} 
\end{proof} 
 There are several equivalent ways to define the boundary operator \( \p:\hB_k^r(U) \to \hB_{k-1}^{r+1}(U) \) for \( r \ge 1 \).  We use the operators prederivative and retraction. 	
 
\begin{defn}\label{dirbound}
	 Let \( v \in \R^n \). Define the \emph{\textbf{directional boundary}} on differential chains by \( \p_v: \hB_k^r(U) \to \hB_{k-1}^{r+1}(U) \) by \( \p_v := P_v E_v^\dagger \). See Figure \ref{fig:Partialboundary}.
\end{defn}

\begin{figure}[ht]
	\centering 
	\includegraphics[height=2.2in]{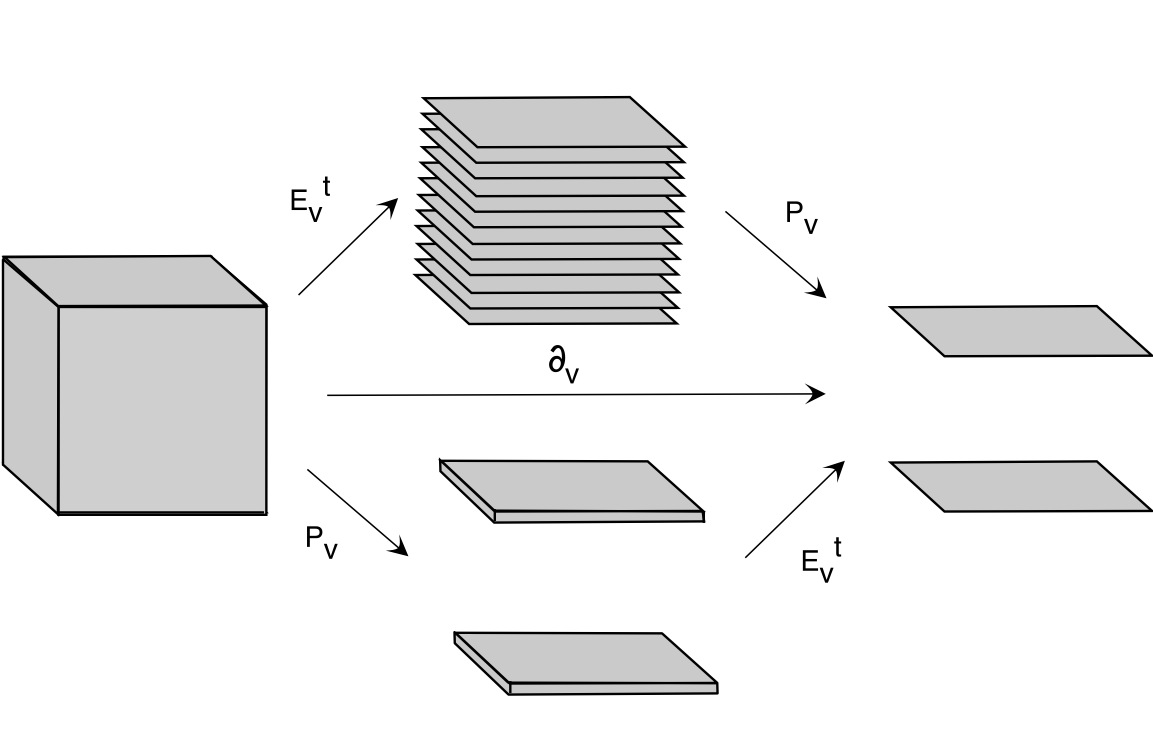}
	\caption{Directional boundary \( \p_v \)} 
	\label{fig:Partialboundary} 
\end{figure}

\subsection{Boundary operator and Stokes' theorem} 
\label{sub:boundary_operator}

For an orthonormal basis \( \{e_i\} \) of \( \R^n \), set
	\begin{equation}
		\label{eq:bd}
		\p := \sum_{i=1}^n \p_{e_i}.
	\end{equation} 
	Since \( P_{e_i} \) and \( E_{e_i}^\dagger \) are continuous, \emph{\textbf{boundary}} \( \p \) is a  continuous operator \( \p:\hB_k^r(U) \to \hB_{k-1}^{r+1}(U) \) that restricts to the classical boundary operator on  polyhedral \( k \)-chains \( \P_k(U) \), and is independent of choice of \( \{e_i\} \). Furthermore, \( \p \circ F_* = F_* \circ \p \).                           

\begin{thm}\label{thm:bod}
	For each \( r \ge 1 \) and \( 0 \le k \le n \),  \( \p: \A_k(U) \to \hB_{k-1}^2(U) \) is a continuous bigraded operator  satisfying
	\begin{enumerate}
		\item \( \p(p_i;v_i) = P_{v_i}(p_i;1) \);
		\item \( \p((p;\a) \cdot (p;\b)) = (\p (p;\a)) \cdot (p;\b) + (-1)^k (p;\a) \cdot \p (p;\b) \) for \( (p;\a), (p;\b) \in \A(p)  \) with \( \dim(p;\a) = k \);
		\item \( \p \circ \p= 0 \);
		\item \(  \p(\A_0(U)) = 0 \).
	\end{enumerate} 
\end{thm}

\begin{proof}   
	These follow the definition of \( \p \) and  properties of \( E_{e_i}^\dagger \) and \( P_{e_i} \).  Continuity follows from continuity of \( P_v \) and \( E_v^\dagger \) (Theorems  \ref{thm:retX} and \ref{thm:IIC}).  
\end{proof}  

\begin{thm}\label{thm:bdcont} 
	The linear map \( \p: \hB_k^r(U) \to \hB_{k-1}^{r+1}(U) \) is continuous with \( \|\p J\|_{B^{r+1}} \le kn \|J\|_{B^r} \) for all \( J \in \hB_k^r(U) \). It therefore extends to a continuous operator \( \p : \hB(U) \to \hB(U) \).
\end{thm}  

\begin{proof} 
This follows from Theorems \ref{thm:retX} and \ref{thm:IIC}.
\end{proof}

\begin{thm}\label{thm:Gpush}
	The following relations hold: 
	\begin{enumerate} 
		\item \( \{\p, E_v \} = P_v \) (Cartan's magic formula for differential chains);
		\item \( [E_v^\dagger, \p] = 0 \); 
		\item \( [P_v, \p] = 0 \). 
		\item \( [m_f, \p] = \sum_i df(e_i) E_{e_i}^\dagger \).
	\end{enumerate} 
\end{thm} 

\begin{proof}          Parts (a)-(c) follow from the definitions and Proposition \ref{pro:asgg}.  
  (d):  This uses Propositions \ref{pro:asgg} \ref{prop:mfpv}, as well as Theorem \ref{thm:bod}.
\begin{align*}
 m_f \p - \p m_f &= m_f \sum_i P_{e_i} E_{e_i}^\dagger - \sum_i P_{e_i} E_{e_i}^\dagger m_f \\&
=  m_f \sum_i P_{e_i} E_{e_i}^\dagger - \sum_i P_{e_i}m_f E_{e_i}^\dagger \\&
= \left(m_f \sum_i P_{e_i} - \sum_i P_{e_i}m_f  \right)E_{e_i}^\dagger\\&
= \left(\sum_i m_fP_{e_i} - P_{e_i}m_f\right) E_{e_i}^\dagger\\&
= \left(\sum_i m_{L_{e_i}f} \right)E_{e_i}^\dagger
\end{align*} 
By Cartan's magic formula \( L_{e_i} = i_{e_i}d + d i_{e_i} \).
Hence \(    L_{e_i}f = i_{e_i}df + d i_{e_i}f = i_{e_i}df = df(e_i) \).  The result follows.
\end{proof}      

\begin{thm}[Stokes' Theorem]\label{thm:stokes} 
Let \( 0 \le k \le n-1 \).  Then	\[ \cint_{\p J} \o = \cint_J d \o \] for all matching pairs  \( J \in \hB_{k+1}^{r-1}(U) \), \( \o \in \B_k^r(U)\), and \( 1 \le r \le \i  \).
\end{thm}

\begin{proof} 
	By Lemma \ref{lem:evdag} and Theorem \ref{thm:preV} \( \o \p (J) = \o \sum_{i=1}^n P_{e_i}E_{e_i}^\dagger (J) = \sum_{i=1}^n L_{e_i} \o E_{e_i}^\dagger(J) = \sum_{i=1}^n d e_i \wedge L_{e_i}\o(J) \).  Since \(  d = \sum_{i=1}^n d e_i \wedge L_{e_i}  \) is equivalent to the classical definition of \( d   \), the result follows.    
\end{proof}   

 	\begin{example} \textbf{Algebraic boundary of the Cantor set}   We revisit the middle third Cantor set \( \G \subset I \), the unit interval from \S\ref{sub:poly}. Recall the chain representatives of approximating open sets obtained by removing middle thirds forms a Cauchy sequence in \( \hB_1^1(I) \). These are algebraic \( 1 \)-chains. The limit \( \widetilde{\G} \) is a differential \( 1 \)-chain that represents \( \G \). Its boundary \( \p \widetilde{\G} \) is well-defined and is supported in the classical middle third Cantor set. We may therefore integrate differential forms over \( \G \) and state the fundamental theorem of calculus where \( \G \) is a domain and \( f\in \B_1^1(I) \): 
\[ \cint_{\p \G} f = \cint_{\G} df. \]    We expand this for actual calculations:  We have   \( \G = \lim_{n\to \i} (3/2)^n\widetilde{E_n}\)  where \( E_n = \cup [qp_{n,i}, q_{n,i}] \) is the set obtained after removing middle thirds at the \( n \)-th stage. Then \( \p \G = \sum_{n=1}^{\i}\sum_i (q_{n,i};(3/2)^n) - (p_{n,i};(3/2)^n). \)   Furthermore, \( \cint_{\p \G} \o = \cint_{\G} d \o \) for all \( \o \in C^{1 + Lip} \).  For example, \[ \cint_{\p \G} x = \cint_{\G} dx = \lim_{n \to \i} \cint_{(3/2)^n\widetilde{E_n}} dx = 1. \]                                                                    

	\end{example}   

 \begin{prop}\label{prop:whitneybd} 
	Suppose \( F \in {\cal M}^r(U, U') \).  Then \( F_* \p = \p F_*: \hB_k^r(U) \to \hB_{k-1}^{r+1}(U') \).     
\end{prop}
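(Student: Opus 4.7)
The plan is to establish the commutation relation by duality, exploiting the isomorphism between differential cochains and differential forms (Theorems \ref{thm:dualspaceopen} and \ref{thm:isotW}). Because differential forms separate points of $\hB_{k-1}^{r+1}(U')$, it suffices to verify the scalar identity $\cint_{F_* \p J} \o = \cint_{\p F_* J} \o$ for every test form $\o$ of matching class; the equality of chains then follows.

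The verification chains together three results that are already available: change of variables (Corollary \ref{cor:pull}), Stokes' theorem in open sets (Theorem \ref{cor:stokesU}), and the classical identity $d(F^* \o) = F^*(d\o)$, which is a consequence of the chain rule and holds on forms of the smoothness classes considered here. Specifically, for $J\in \hB_k^r(U)$ and an appropriate $\o\in \B_{k-1}^{r+1}(U')$, I would compute
\[
\cint_{F_* \p J}\o \;=\; \cint_{\p J} F^*\o \;=\; \cint_J d(F^*\o) \;=\; \cint_J F^*(d\o) \;=\; \cint_{F_* J} d\o \;=\; \cint_{\p F_* J}\o,
\]
where successive equalities use Corollary \ref{cor:pull}, Theorem \ref{cor:stokesU}, the classical commutation, Corollary \ref{cor:pull} again, and Theorem \ref{cor:stokesU} again.

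The main obstacle is the bookkeeping of regularity classes, since $\p$ raises the $B^r$ level by one while $F_*$ preserves it. On the right-hand side $\p F_* J$, everything stays at class $r$ for $F_*$ and then passes to class $r+1$ after applying $\p$, which is consistent with $F \in {\cal M}^r$ via Theorem \ref{thm:prop2}. On the left-hand side, however, $\p J \in \hB_{k-1}^{r+1}(U)$, and pushing it forward requires $F_*$ at level $r+1$, which strictly speaking calls for $F \in {\cal M}^{r+1}$. In practice one reads the proposition at the level where both sides are simultaneously defined, or simply passes to the inductive limit $\hB(U)$, where the distinction evaporates because smooth $F\in {\cal M}^\infty$ yields $F_*\in {\cal L}(\hB(U),\hB(U'))$.

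As a fallback avoiding the class comparison entirely, one can argue first on Dirac chains $A\in \A_k(U)$: the identity $\cint_A d(F^*\o) = \cint_A F^*(d\o)$ reduces to the pointwise chain rule at the finitely many support points of $A$, yielding $F_*\p A = \p F_* A$ in $\hB_{k-1}^{r+1}(U')$; then, since $\A_k(U)$ is dense in $\hB_k^r(U)$ and both $F_*$ and $\p$ are continuous (Theorems \ref{thm:prop2} and \ref{cor:bdcontU}), the identity extends to all $J\in \hB_k^r(U)$ by approximation.
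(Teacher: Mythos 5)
Your proof is correct and takes essentially the same route as the paper, whose entire argument is to invoke duality together with the classical commutation $F^* d = d F^*$ on forms (the paper notes that ``a direct calculation is possible, but takes longer''). You merely fill in the details the paper leaves implicit --- change of variables (Corollary \ref{cor:pull}) and Stokes' theorem in open sets (Theorem \ref{cor:stokesU}) --- and you correctly flag the regularity bookkeeping for $F_*$ acting on $\hB_{k-1}^{r+1}(U)$, a point the paper glosses over.
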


\begin{proof}  
	This follows directly from the dual result on differential forms \( F^* d = F^* d \).   A direct calculation on differential chains is possible, but takes longer.
\end{proof}

\begin{example}\label{ex:Qpic} 
	Let \( Q' \) be the open unit disk \( Q \) in \( \R^2 \), less the interval \( [0,1] \times \{0\} \). It follows that \( \widetilde{Q'} = \widetilde{Q} \in \hB_2^1(\R^2) \) since evaluations by \( n \)-forms in \( \B_n(\R^2) \) are the same over \( Q' \) and \( Q \). But \( \widetilde{Q'}_{Q'} \in \hB_2^1(Q') \) and is not the same chain as \( \widetilde{Q'} \). Not only are these chains in different topological vector spaces, but their boundaries are qualitatively different: Let \( L_+ \in  \hB_1^2(Q') \) be the \( 1 \)-chain representing the interval \( [0,1] \times \{0\} \) found by approximating \( [0,1] \times \{0\} \) with intervals \( [0,1] \times \{1/n\} \cap Q \).  Similarly, let \( L_- \in \hB_1^2(Q') \) be the \( 1 \)-chain representing the interval \( [0,1] \times \{0\} \) found by approximating \( [0,1] \times \{0\} \) with intervals \( [0,1] \times \{-1/n\} \). Recall \( \o_0 \) as defined in Example \ref{o0}. Then \( \o_0^2 dx \in \B_1^2(Q') \), \( \cint_{L_+} \o_0^2 dx = \lim_{n \to \i}\cint_{\widetilde{[0,1] \times \{1/n\}}} \o_0^2 dx = 1 \) and \( \cint_{L_+} \o_0^2 dx = \lim_{n \to \i} \cint_{\widetilde{[0,1] \times \{1/n\}}} \o_0^2 dx = 0 \).  It follows that \( L_+ - L_- \ne 0 \) and thus \( \p(\widetilde{Q'}_{Q'}) = L_+ - L_- + \widetilde{S^1}_{Q'} \ne \widetilde{S^1}_{Q'} \) while \( \p \widetilde{Q'}=\widetilde{S^1} \) (see Figure \ref{fig:Mapping}).    
\end{example}

\begin{prop}\label{prop:repalgecells} 
	If \( F \in {\cal M}^r(U,U') \) is a diffeomorphism onto its image, and \( \s \) is an affine \( k \)-cell in \( U \), then \( F_*\widetilde{\s} = \widetilde{F \s} \), \( \supp(F_*\widetilde{\s}) =  F \s  \), and \( \supp(\p F_*\widetilde{\s}) = F\fr(\s) \).    
\end{prop}

\begin{proof} 
Since \( F \) is a diffeomorphism,  \(  \int_\s F^* \o = \int_{F\s} \o \) for all \( \o \in \B_k^r(U') \).  Therefore, by Theorem \ref{thm:opensets} and Corollary \ref{cor:pull}  
 \[ \cint_{F_* \widetilde{\s}} \o = \cint_{\widetilde{\s}} F^* \o  = \int_\s F^* \o = \int_{F\s} \o =  \cint_{\widetilde{F \s}} \o. \]  
 It follows that \(  F_*\widetilde{\s} = \widetilde{F \s} \) and thus \( \supp(F_*\widetilde{\s}) = \supp(\widetilde{F \s}) =  F \s  \).   Use Proposition \ref{prop:whitneybd} and the previous  equation for \( (k-1) \)-cells to deduce \( \supp(\p F_*\widetilde{\s}) = \supp(F_*\p \widetilde{\s}) = \supp(F_* \widetilde{\fr\,\s}) = F\fr(\s)  \).
\end{proof}

\subsection{Prederivative with respect to a vector field} 
\label{sub:prederivatve_wrt}
  \begin{defn}\label{prederivativeX}
	Define the bilinear map \( P: {\cal V}^{r+1}(U) \times \hB_k^r(U) \to \hB_k^{r+1}(U) \) by \[ P(V,J) := \p E(V,J) + E(V,\p(J)). \]
\end{defn}  
   Let \( P_V(J) := P(V,J) \) (see Fig. \ref{fig:Dipolesphere}).  The dual operator \( L_V \) is the classically defined Lie derivative since the relation \( L_V = i_V d + d i_V \) uniquely determines it.

\begin{figure}[ht]
	\centering
		\includegraphics[height=1.25in]{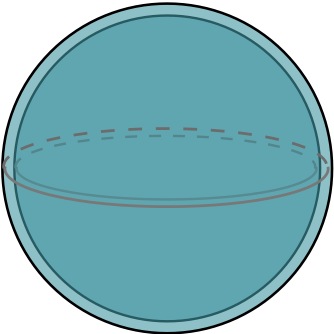}
	\caption{Prederivative of the sphere with respect to the radial vector field}
	\label{fig:Dipolesphere}
\end{figure}

\begin{thm}\label{thm:predfirst} 
Let \(0 \le k \le n\) and \( r \ge 1 \).  The bilinear map determined by 
	\begin{align*} 
		P = P_k^r: {\cal V}^{r+1}(U) \times \hB_k^r(U) &\to \hB_k^{r+1}(U) \\ (V, J) &\mapsto P_V(J) 
	\end{align*} 
is well-defined and continuous with \( \|P(V, J)\|_{B^{r+1}} \le 2 kn^3 2^r \|V^\flat\|_{B^{r+1}} \|J\|_{B^{r}} \). Furthermore, for each \( 0 \le k \le n \), there exists a separately continuous bilinear map \( P = P_k:{\cal V}^\i(U) \times \hB_k(U) \to \hB_k(U) \) which restricts to \( P_k^r \) on each \( u_k^r(\hB_k^r(U)) \).    
\end{thm}  
  
\begin{proof} 
By Definition \ref{prederivativeX}, and Theorems \ref{thm:bod}, and  \ref{thm:IIAV}
\begin{align*} 
      \|P_V J\|_{B^{r+1}} \le  \|\p E_V J\|_{B^{r+1}} +  \| E_V \p J\|_{B^{r+1}} &\le kn \|E_V J\|_{B^{r}}+ n^2 2^r\|V^\flat\|_{B^{r+1}} \|\p J\|_{B^{r+1}} \\&\le  k n^3 2^r\|V^\flat\|_{B^{r}} \|J\|_{B^{r}}+ k(n-1) n^2 2^r \|V^\flat\|_{B^{r+1}} \|J\|_{B^{r}}  \\&\le 2 kn^3 2^r \|V^\flat\|_{B^{r+1}}\|J\|_{B^{r}}.  
\end{align*}   
For the last assertion, we first establish continuity of \( P \) in the first variable:  Suppose \( J \in \hB_k(U) \). Then there exists \( r \ge 0 \) and \( J^r \in \hB_k^r(U) \) such that \( u_k^r(J^r) = J \). Suppose \( V_i \to 0 \) in \( {\cal V}^\i(U) \).  Since the inclusion \( {\cal V}^\i(U) \to {\cal V}^{r+1}(U) \) is the identity map and continuous,  then \( V_i \to 0 \) in \( {\cal V}^{r+1}(U) \).  Now use the fact that \( P_k^r: {\cal V}^{r+1}(U) \times \hB_k^r(U) \to \hB_k^{r+1}(U) \) is continuous in the first variable. Continuity in the second variable of \( P \) follows from Theorem \ref{thm:continuousoperators} since  \( P_V(H_k(U) \subset H_k(U)) \).         
\end{proof}   

\begin{examples}[Representatives of soap films]
	 
	The chain	 \( P_V F_* \widetilde{\s} \) is a  \emph{\textbf{dipole \( k \)-cell}}, and \( \sum  P_{V_i} F_{*i} \widetilde{s_i} \) is a \emph{ \textbf{dipole \( k \)-chain}}.    Dipole \( k \)-chains are useful for representing soap films, Moebius strips, and soap bubbles. (See \cite{soap}, \cite{plateau10} for more details).  
 
\end{examples}

\begin{thm}[Change of order]\label{thm:preV}  
Let \(0 \le k \le n\). Then
\begin{equation}\label{primitiveprederivatives} 
\cint_{P_V J} \o = \cint_J L_V \o
\end{equation} 
for all matching triples  \( V \in {\cal V}^{r+1}(U) \),   \( J \in \hB_k^r(U) \), \( \o \in \B_k^{r+1}(U) \), and \( 1 \le r \le \i  \).
\end{thm}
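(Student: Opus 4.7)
My plan is entirely direct: use the definition of prederivative from Definition \ref{prederivativeV}, namely $P_V = \partial E_V + E_V \partial$ (Cartan's magic formula for chains), and then combine Stokes' theorem in open sets (Theorem \ref{cor:stokesU}) with Change of dimension I (Theorem \ref{cor:extV}) on each summand. The result will then reduce to the classical Cartan identity $L_V \o = i_V d\o + d i_V \o$ on the form side.

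Concretely, for a matching triple $(V,J,\o)$ with $V \in \mathcal{V}^{r+1}(U)$, $J \in \hB_k^r(U)$ and $\o \in \B_k^{r+1}(U)$, I will compute
\begin{align*}
\cint_{P_V J} \o
&= \cint_{\partial E_V J} \o + \cint_{E_V \partial J} \o \\
&= \cint_{E_V J} d\o + \cint_{\partial J} i_V \o \\
&= \cint_J i_V d\o + \cint_J d\, i_V \o \\
&= \cint_J L_V \o.
\end{align*}
The first equality is the definition of $P_V$ together with linearity of the integral pairing. The second applies Theorem \ref{cor:stokesU} to each term: for the first term, $E_V J \in \hB_{k+1}^r(U)$ by Theorem \ref{pro:IIAV} and $d\o \in \B_{k+1}^r(U)$, so the pair matches; for the second, $\partial J \in \hB_{k-1}^{r+1}(U)$ and $i_V \o \in \B_{k-1}^{r+1}(U)$ (the latter because $V \in \mathcal{V}^{r+1}$), again a matching pair. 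The third equality uses Theorem \ref{cor:extV} in both terms, which is legitimate since $V \in \mathcal{V}^{r+1}(U) \subset \mathcal{V}^r(U)$, and $d i_V \o \in \B_k^r(U)$ pairs with $J \in \hB_k^r(U)$. The last equality is the classical Cartan formula, valid pointwise on simple $k$-elements and hence on all of $\A_k(U)$, extended to $\hB_k^r(U)$ by continuity.

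The only subtle point — and therefore the main bookkeeping obstacle — is verifying that each pairing is a \emph{matching} pair in the sense of the duality theorems, which is why the hypothesis demands one extra degree of regularity on $V$ and on $\o$ relative to $J$. The appearance of a boundary $\partial$ or an exterior derivative $d$ costs one order of smoothness (raising the $B^r$-index of a chain, lowering that of a form), and every introduction of $V$ via $E_V$ or $i_V$ borrows from the class of $V$. A quick audit confirms that the budget $V \in \mathcal{V}^{r+1}$, $\o \in \B_k^{r+1}$ is exactly balanced for both intermediate expressions. For $r = \infty$, one simply notes that $E_V$, $\partial$, $i_V$ and $d$ all act continuously on the inductive and projective limits respectively (Theorems \ref{thm:bod}, \ref{cor:bdcontU}, \ref{pro:IIAV} and \ref{cor:extV}), so the same chain of equalities carries over verbatim in $\hB_k(U)$ paired with $\B_k(U)$.
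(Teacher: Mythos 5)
Your proof is correct and follows essentially the same route the paper intends: since Definition \ref{prederivativeV} sets $P_V = \p E_V + E_V \p$, the paper simply remarks that its dual is $i_V d + d i_V = L_V$ by Cartan's formula, and your computation is the explicit unpacking of that duality via Theorems \ref{cor:stokesU} and \ref{cor:extV}. The regularity audit you perform (each application of $\p$, $d$, $E_V$, $i_V$ landing in a matching class) is exactly the bookkeeping the paper leaves implicit, and it checks out.
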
 

\begin{proof} 
This follows from Theorems \ref{thm:stokes} and \ref{thm:retXint}.
\end{proof} 
  
\begin{thm}\label{thm:prederU} 
If \( V \in {\cal V}^{r+1}(U) \) is a vector field and \( J \in \hB_k^r(U) \) has compact support \( \supp(J) \subset U \), then \[ P_V J = \lim_{t \to 0} (\phi_{t*} - Id)/t(J) \] where \( \phi_t \) is the time-\( t \) map of the flow of \( V \). 
\end{thm}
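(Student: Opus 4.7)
The plan is to establish the limit in the norm topology of $\hB_k^{r+1}(U)$ by combining a Duhamel-type integrated identity on forms with change of variables and change of order, then transferring to chains via nondegeneracy of the pairing. Since $\supp(J)$ is compact in $U$, standard ODE theory for the flow of the smooth vector field $V$ supplies $\epsilon > 0$ and an open set $W$ with $\supp(J) \subset W$, $\overline W \subset U$, such that $\phi_s(W) \subset U$ and $\phi_s \in {\cal M}^{r+1}(W, U)$ for all $|s| < \epsilon$, with $\rho_{r+1}(\phi_s - Id) \to 0$ as $s \to 0$. Hence $\phi_{s*} J \in \hB_k^r(U)$ is well-defined by Theorem \ref{thm:prop2}, and one obtains a uniform operator bound $\|\phi_{s*}\| \le C$ on $\hB_k^r(U)$ for $|s| < \epsilon$.

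The key identity is a Duhamel formula on forms. For any test form $\omega \in \B_k^{r+1}(U)$, the classical identity $\tfrac{d}{ds}\phi_s^* \omega = \phi_s^* L_V \omega$, valid on $\overline W$, integrates to $\phi_t^* \omega - \omega = \int_0^t \phi_s^* L_V \omega \, ds$. Pairing with $J$ and applying Corollary \ref{cor:pull} (change of variables) followed by Theorem \ref{thm:preV} (change of order) yields
\[
\cint_{(\phi_{t*} - Id)J/t}\,\omega \;=\; \frac{1}{t}\int_0^t \cint_{\phi_{s*}J} L_V\omega\, ds \;=\; \frac{1}{t}\int_0^t \cint_{P_V\,\phi_{s*} J}\,\omega\, ds.
\]
Nondegeneracy of the bilinear pairing (Theorem \ref{thm:continuousint}) combined with commutation of $P_V$ with the $s$-integral then identifies
\[
\frac{(\phi_{t*} - Id)J}{t} \;-\; P_V J \;=\; \frac{1}{t}\int_0^t P_V\bigl(\phi_{s*} J - J\bigr)\, ds
\]
as a genuine identity in $\hB_k^{r+1}(U)$.

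Bounding the right-hand side via the continuity estimate of Theorem \ref{thm:predfirst} produces
\[
\Bigl\|\tfrac{(\phi_{t*} - Id)J}{t} - P_V J\Bigr\|_{B^{r+1,U}} \;\le\; \frac{C'\,\|V^\flat\|_{B^{r+1,U}}}{|t|}\int_0^{|t|} \|\phi_{s*} J - J\|_{B^{r,U}}\, ds,
\]
so the theorem reduces to continuity of $s \mapsto \phi_{s*} J$ at $s = 0$ in $\hB_k^r(U)$. For a simple $k$-element $(p;\a)$ I would decompose
\[
\phi_{s*}(p;\a) - (p;\a) \;=\; \bigl((\phi_s(p);\a) - (p;\a)\bigr) + \bigl(\phi_s(p);\,D\phi_s(p)\a - \a\bigr);
\]
the first summand is a $1$-difference chain with $B^1$-norm $O(s)$ by Lemma \ref{lem:trnsl}, since $\|\phi_s(p) - p\| = O(s)$, and the second has mass $O(s)$ since $\|D\phi_s(p) - I\| = O(s)$ on $\overline W$. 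For general $J \in \hB_k^r(U)$ this upgrades by density of Dirac chains together with the uniform operator bound from the setup, via a standard $\e/3$ argument. The main technical obstacle is precisely this last step: uniform-in-$s$ control of $\|\phi_{s*}\|$ as an operator on $\hB_k^r(U)$ requires uniform-in-$s$ control of $\rho_{r+1}(\phi_s)$ on $\overline W$, which in turn follows from boundedness of $V$ and its derivatives up to order $r+1$ on the compact set $\overline W$ together with Gronwall-type estimates for the variational equations satisfied by $\phi_s$ and $D\phi_s$; compactness of $\supp(J)$ inside $U$ is essential to make these bounds finite.
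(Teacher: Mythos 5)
Your argument is correct, but it takes a genuinely different route from the paper's. The paper's proof is a short duality computation: writing the difference quotient as \( F_{t*} \), it picks for each \( t \) a norming form \( \eta \) with \( \|\eta\|_{B^{r+1,U}} = 1 \) realizing \( \|(F_{t*}-P_V)J\|_{B^{r+1,U}} = \left|\cint_J (F_t^* - L_V)\eta\right| \le \|J\|_{B^{r,U'}}\,\|(F_t^*-L_V)\eta\|_{B^{r,U'}} \), and concludes from convergence of the pullback difference quotient to \( L_V\eta \) in the \( B^r \) norm on forms. (Strictly, since the norming form depends on \( t \), that convergence must be uniform over the unit ball of \( \B_k^{r+1}(U) \) --- an analytic fact of exactly the Gronwall/Taylor flavor of your last step, which the paper leaves implicit.) You instead integrate the form-side identity (Duhamel), transfer it to chains by nondegeneracy of the pairing, and reduce everything to strong continuity of \( s\mapsto \phi_{s*}J \) at \( s=0 \) in \( \hB_k^r(U) \), verified on simple elements and upgraded by density. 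Your route costs more machinery: the \( \hB_k^{r+1}(U) \)-valued Bochner integral \( \int_0^t P_V(\phi_{s*}J - J)\,ds \) must exist before you can invoke nondegeneracy, and its existence already requires the continuity you only establish in the final step, so the logical order should be reversed; you also need the \( 1 \)-difference chains \( \D_{\phi_s(p)-p}(p;\a) \) to be \emph{inside} \( U \), which holds for small \( s \) by compactness of \( \overline{W} \) in \( U \) but deserves a sentence. What it buys is an explicit modulus of convergence --- the error is controlled by \( \sup_{|s|\le |t|}\|\phi_{s*}J - J\|_{B^{r,U}} \) --- and it never needs uniform-in-\( \o \) convergence of the form-side difference quotient, only the pointwise classical identity \( \tfrac{d}{ds}\phi_s^*\o = \phi_s^*L_V\o \). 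In short, both proofs rest on the same duality bound \( |\cint_J\o|\le\|J\|_{B^r}\|\o\|_{B^r} \) and on compactness of \( \supp(J) \); the difference is whether the hard quantitative estimate is placed on forms (the paper) or on chains (you).
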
   

\begin{proof}   
	 Since  \( \supp(J) \subset U \) there exists an open set \( \supp(J) \subset U' \subset U \).  Since \( \supp(J) \) is compact, there exists \( t_0 > 0 \) such that \( \phi_t(p) \) is defined for all \( 0 \le t \le t_0 \) and \( p\in \supp(J) \).    
If \( \o \in \B_k^{r+1}(U) \), then  \( L_V \o = \lim_{t \to 0}((\phi_t^* - Id)/t (\o)) \in \B_k^r(U) \).   Let \( F_t = \phi_t - Id \).    Then	\( \|L_V \o - F_t^*(\o)\|_{B^{r}} \to 0 \) as \( t \to 0 \).
	
There exists \( \eta \in \B_k^{r+1}(U) \) with \( \|\eta\|_{B^{r+1}} = 1 \) and \( \|(F_{t*} - P_V)J\|_{B^{r+1}} = \cint_{(F_{t*} - P_V)J} \eta   = \left|\cint_{J} (F_t^* - L_V) \eta \right| \le  \|J\|_{B^{r}} \|(F_t^*- L_V)\eta\|_{B^{r}}  \to 0 \) as \( t \to 0 \). 
Hence \(  P_V J = \lim_{t \to 0} F_{t*}J =  \lim_{t \to 0} (\phi_{t*} - Id_*)(J/t)  \).  \end{proof}

 The next result follows directly from the definitions.

\begin{thm}[Commutation Relations] \label{thm:cr} 
	If \( V,W \in {\cal V}^r(U) \), then
	\begin{enumerate}
		\item \( [P_V, P_W] = P_{[V,W]} \);
 		\item \( [E_V, P_W] = [P_V, E_W] =	E_{[V,W]} \).
	\end{enumerate} 	     
\end{thm}
                    
\subsection{Naturality of the operators} \label{ssub:naturality_of_the_operators}
  
\begin{thm}[Naturality of the operators]\label{thm:mF}  
	If \( V \in {\cal V}^r(U) \)   and \( F \in {\cal M}^r(U, U') \), then 
\begin{enumerate} 
	\item \( F_* m_{f \circ F} = m_f F_* \) for all \( f \in \B_0^r(U') \) for \( 1 \le r \le \i \).   \\If \( F \) is a diffeomorphism  onto its image, then 
  \item \( F_* E_V = E_{FV} F_* \); 
	\item \( F_* E_V^\dagger = E_{FV}^\dagger F_* \) if \( F \) preserves the metric; 
	\item \( F_* P_V = P_{FV} F_*\). 
\end{enumerate} 
\end{thm}

\begin{proof} 
	(a): \(F_* m_{f \circ F} (p;\a) = (F(p);f(F(p)) F_*\a) = m_f F_* (p;\a) \).  
(b): is omitted since it is much like (c).  
(c): Since \( \<F_*V, F_* v_i\> = \<V,v_i\> \) we have 
\begin{align*}
 	F_*(E_V^\dagger((p;\a))) = \sum_{i=1}^k(-1)^{i+1} \<V,v_i\> (F(p); F_* v_1 \wedge \cdots \hat{v_i} \cdots \wedge v_k)&= \sum_{i=1}^k (-1)^{i+1}
	\<F_*V, F_* v_i\> (F(p); F_* v_1 \wedge \cdots \hat{v_i} \cdots \wedge v_k) \\&=E_{FV}^\dagger (F(p); F_* \a) \\&=    E_{FV}^\dagger F_* (p;\a)
\end{align*} 
 (d): Observe that if \( V_t \) is the flow of \( V \), then \( F V_t F^{-1} \) is
the flow of \( FV \). It follows that
\begin{align*} 
F_* P_v (p;\a) = F_* \lim_{t \to 0}
(V_{t}(p); V_{t*}\a/t) - (p;\a/t) &= \lim_{t \to 0} (F(V_{t}(p)); F_*V_{t*}\a/t) -
(F(p);F_*\a/t) \\&= \lim_{t \to 0} ((FV_{t}F^{-1})F(p); F_*\a/t) - (F(p);F_*\a/t) \\&=
P_{FV}(F(p); F_*\a) \\&= P_{FV}F_* (p;\a). 
\end{align*}
\end{proof}

%

\section{Perpendicular complement and divergence theorems} 
\label{sub:perpendicular_complement_and_classical_integral_theorems_of_calculus}

\subsection{Clifford algebra and perpendicular complement}\label{sub:perp} 

Consider the subalgebra \( {\cal C\ell}  \subset {\cal L}(\hB) \) of linear operators on \( \hB(U) \) generated by \( \{C_v\,:\, v \in \R^n \} \) where \( C_v = E_v + E_v^\dagger \).  It turns out that \( {\cal C\ell} \) is isomorphic to the Clifford algebra, where the isomorphism depends on the inner product. (This is not used in our paper and we leave it as an exercise for interested readers.)   The product\footnote{Some authors call \(   u \wedge v + \<u,v\> \) the   ``geometric product''.  This is found within our viewpoint of differential chains as \( (E_u + E_u^\dagger)\circ (E_v + E_v^\dagger)(0;1) =  u \wedge v + \<u,v\>  \).} for the Clifford algebra corresponds to composition of operators in \( {\cal C\ell} \): \( C_u \cdot C_v =  (E_u + E_u^\dagger)\circ (E_v + E_v^\dagger) \).  Let us see what \( C_v \) does to a simple \( k \)-element \( (p;\a) \):  For simplicity, assume \( \a = e_I \) and \( v = e_i \), taken from an orthonormal basis \( \{e_i\} \) of \( \R^n \). If \( e_i \) is in the \( k \)-direction of  \( e_I \), then \( C_{e_i} \) ``factors it out of'' \( e_I \), reducing its dimension to \( k-1 \).  If \( e_i \) is not in the \( k \)-direction of \( e_I \), then \( C_{e_i} \) ``factors it into''  \( e_I \), increasing its dimension to \( k+1 \).      

\begin{defn}\label{perp}  
	Let \( \perp: \A_k(U) \to \A_{n-k}(U) \) be the operator on Dirac chains given by  \( \perp  := C_{e_n} \circ \cdots \circ C_{e_1} = \Pi_{i=1}^n (E_{e_i} + E_{e_i}^\dagger) \).    Then \( \perp \) extends to a continuous linear map on \(\hB(U) \) since \( E_v \) and \( E_v^\dagger \) are continuous. We call \( \perp \) \emph{\textbf{perpendicular complement}}.  Perpendicular complement depends on the inner product, but not on the choice of an orthonormal basis.
\end{defn}

The next result is immediate:
\begin{prop}\label{prop:perpagain} 
	If \( \a \) is a \( k \)-vector, then \( \perp(p;\a) = (p; \a^\perp) \), where \( \a^\perp \) is the \( (n-k) \)-vector satisfying \( \a \wedge \a^\perp = (-1)^k \|\a\|^2 e_1 \wedge \cdots \wedge e_n \).  Furthermore, the \( k \)-direction of \( \a \) is orthogonal to the \( (n-k) \)-direction of \( \a^\perp \). Thus, \( \perp \circ \perp = (-1)^{k(n-k)} I\).
\end{prop}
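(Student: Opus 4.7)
My plan is to first reduce to the case where $\a = e_I$ for a strictly increasing multi-index $I = \{i_1 < \cdots < i_k\} \subseteq \{1, \ldots, n\}$. Both $\perp$ and the map $\a \mapsto \a^\perp$ defined by the wedge equation are $\R$-linear in $\a$, so it suffices to verify the identity on the basis $\{(p;e_I)\}$ of $\{p\} \times \L_k(\R^n)$; linearity handles the general simple (and indeed arbitrary) $k$-vector case, and independence of $p$ is clear because every $C_{e_m}$ commutes with (in fact preserves) the basepoint.

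Next I would compute $\perp(p;e_I) = C_{e_n} \circ \cdots \circ C_{e_1}(p;e_I)$ directly, tracking how each factor acts. The key observation is that on a simple basis element $(p;e_J)$ each $C_{e_m}$ collapses to a single term: $E_{e_m}(p;e_J) = (p; e_m \wedge e_J)$ when $m \notin J$ (with $E_{e_m}^\dagger$ vanishing because $e_m$ is orthogonal to every vector in $e_J$), and $E_{e_m}^\dagger(p;e_J) = (-1)^{r+1}(p;e_{J \setminus \{m\}})$ when $m$ appears in the $r$-th slot of $J$ (with $E_{e_m}$ vanishing since $e_m \wedge e_J = 0$). An induction on $m$ then shows that after $m$ applications the chain equals $\epsilon_m(p;e_{J_m})$, where $J_m = I \triangle \{1,\ldots,m\}$ is the symmetric difference and $\epsilon_m \in \{\pm 1\}$ collects the accumulated insertion/deletion signs. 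At $m = n$ one has $J_n = I^c$, so $\perp(p;e_I) = \epsilon_n (p;e_{I^c})$.

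The third and crucial step is to pin down $\epsilon_n$ and match it against the sign predicted by $\a \wedge \a^\perp = (-1)^k\|\a\|^2 e_1 \wedge \cdots \wedge e_n$. Writing $\sigma(I) \in \{\pm 1\}$ for the shuffle sign with $e_I \wedge e_{I^c} = \sigma(I)\, e_1 \wedge \cdots \wedge e_n$, the defining equation forces $e_I^\perp = (-1)^k\sigma(I)\, e_{I^c}$, so the proof reduces to the combinatorial identity $\epsilon_n = (-1)^k \sigma(I)$. Both sides depend only on the positions of the elements of $I$ within $\{1,\ldots,n\}$, and a careful bookkeeping (expressing each deletion sign $(-1)^{r+1}$ in terms of the running position of the element of $I$ being removed) shows agreement. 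This is the main obstacle of the argument: conceptually routine, but with three independent sign conventions (operator composition, the wedge-product shuffle, and the $(-1)^k$ in the definition of $\a^\perp$) that must be aligned with no slip.

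Finally, the orthogonality clause is transparent for $\a = e_I$, since $(p;e_I^\perp)$ is supported on the complementary basis directions; it passes to arbitrary simple $\a$ by choosing an orthonormal basis extending one for the $k$-direction of $\a/\|\a\|$, together with the stated fact (asserted just before the proposition) that $\perp$ is independent of the choice of orthonormal basis. For the involution claim $\perp \circ \perp = (-1)^{k(n-k)} I$, I would apply the formula twice: from $\a \wedge \a^\perp = (-1)^k\|\a\|^2 e_1 \wedge \cdots \wedge e_n$ and $\a^\perp \wedge (\a^\perp)^\perp = (-1)^{n-k}\|\a^\perp\|^2 e_1 \wedge \cdots \wedge e_n$, combined with the anticommutativity $\a^\perp \wedge \a = (-1)^{k(n-k)} \a \wedge \a^\perp$ and the norm identity $\|\a^\perp\| = \|\a\|$ (evident on basis $k$-vectors and extending by linearity), I conclude $(\a^\perp)^\perp = (-1)^{k(n-k)} \a$, and hence $\perp \circ \perp = (-1)^{k(n-k)} I$ on $\hB_k(\R^n)$ by continuity and density of Dirac chains.
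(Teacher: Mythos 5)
Your overall architecture is the right one, and in fact it supplies more than the paper does: the paper gives no proof of this proposition (it is prefaced only by ``it is not too hard to see''), so the direct computation you outline --- reduce to basis elements \(e_I\), observe that each \(C_{e_m}\) acts on \((p;e_J)\) by a single signed insertion or deletion, run the symmetric-difference induction to land on \(e_{I^c}\), and then compare the accumulated sign \(\epsilon_n\) against the shuffle sign --- is the natural and essentially only route. The reduction to basis elements, the single-term collapse of each \(C_{e_m}\), and the identity \(J_n=I^c\) are all correct.

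The gap is precisely the step you defer to ``careful bookkeeping shows agreement'': with the paper's stated conventions it does \emph{not} show agreement. Take \(n=2\) and \(\a=e_1\). Then \(\perp(p;e_1)=C_{e_2}\bigl(C_{e_1}(p;e_1)\bigr)=C_{e_2}(p;1)=(p;e_2)\), whereas the displayed relation \(\a\wedge\a^\perp=(-1)^k\|\a\|^2\,e_1\wedge e_2\) forces \(e_1^\perp=-e_2\); similarly \(\perp(p;1)=(p;e_2\wedge e_1)=-(p;e_1\wedge e_2)\) while the relation demands \(1^\perp=+e_1\wedge e_2\). Carrying your bookkeeping out in general gives \(\epsilon_n\,\sigma(I)=(-1)^{\binom{n}{2}+kn+\binom{k+1}{2}}\), which does depend only on \((n,k)\) but equals the required \((-1)^k\) only for some pairs \((n,k)\). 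That this is not a slip in your sign conventions follows from an independent check: by Proposition \ref{pro:asgg} the operators \(C_{e_i}\) anticommute for \(i\ne j\) and satisfy \(C_{e_i}^2=I\), hence \(\perp\circ\perp=(-1)^{\binom{n}{2}}I\) on every \(\hB_k^r(\R^n)\) --- a \(k\)-independent sign that contradicts the claimed \((-1)^{k(n-k)}\) already for \((n,k)=(2,0)\) and \((3,0)\). So no amount of bookkeeping can close the argument for the statement as literally written: you must either correct the sign in the defining relation (your own computation yields \(e_I\wedge e_I^\perp=(-1)^{\binom{n}{2}+k(n+1)+\binom{k}{2}}\,e_1\wedge\cdots\wedge e_n\), and correspondingly \(\perp^2=(-1)^{\binom{n}{2}}I\)) or identify a different intended convention for \(E_v^\dagger\) or for the order of composition in \(\perp\), and state explicitly which version you are proving. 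Your final paragraph --- orthogonality of the directions, and deducing the involution identity from the wedge relation together with \(\|\a^\perp\|=\|\a\|\) --- is logically sound conditional on the formula, so the needed repair is localized to the sign.
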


\begin{figure}[ht] 
	\centering 
	\includegraphics[height=1in]{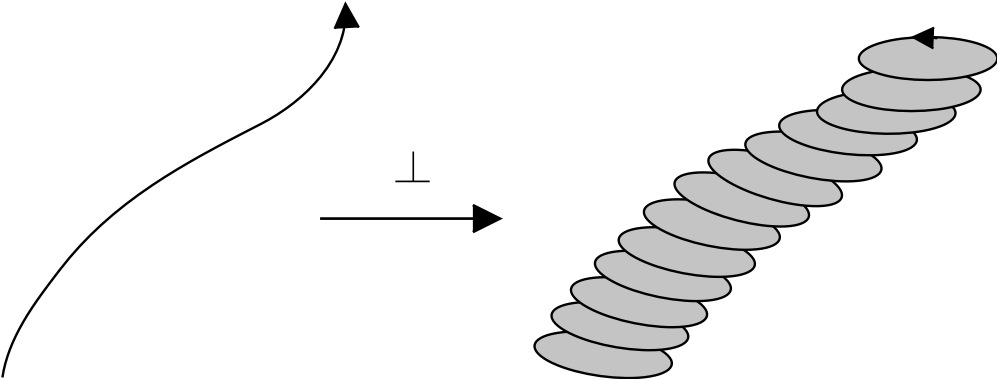}
	\caption{Perpendicular complement \( \perp \)} 
	\label{fig:Perp} 
\end{figure}

It follows that \( \star \o := \o \perp \) where \( \star \) is the classical Hodge \cite{thehodge} star operator on forms. 

\begin{thm}[Star theorem]\label{thm:perpcont}
Let \( 0 \le k \le n \).	The linear map \(	\perp: \A_k(U) \to \A_{n-k}(U) \) determined by \( (p;\a) \mapsto (p; \perp \a), \) for simple $k$-elements \( (p;\a) \), satisfies \[ \|\perp A\|_{B^r} = \|A\|_{B^r} \] for all \( A \in \A_k(U) \). It therefore extends to a continuous linear map \( \perp: \hB_k^r(U) \to \hB_{n-k}^r(U) \) for each \( 0 \le k \le n, 0 \le r <\i \), and to a continuous graded operator \(\perp \in {\cal L}(\hB) \) satisfying \[ \cint_{\perp J} \o = \cint_J \star \o \] for all matching pairs \( J \in \hB_k^r(U) \), \( \o \in \B_{n-k}^r(U) \), and \( 0 \le r \le \i  \).
\end{thm}

\begin{proof} 
	We know \( \|\perp J\|_{B^r} = \|J\|_{B^r} \) since \( \|J\|_{B^r} = \sup\frac{|\o(J)|}{\|\o\|_{B^r}} \) and \( \|\o\|_{B^r} = \|\star \o\|_{B^r} \) for \( 0 \le r <  \i  \). A direct proof using difference chains is straightforward and can be found in \cite{hodge}.

	The integral relation holds on Dirac chains since \( \star \o = \o \perp \) on Dirac chains and by continuity. Continuity of \(\perp: \hB(U) \to \hB(U) \) follows (see Figure \ref{fig:Perp}).
 
\end{proof}   
\begin{cor}[Kelvin-Stokes' Curl Theorem]\label{thm:curl}
	Let \( 0 \le k \le n-1 \).  Then	\[ \cint_{\p \perp J} \o = \cint_J \star d \o \] for all matching pairs  \( J \in \hB_{n-k-1}^{r-1}(U) \), \( \o \in \B_k^r(U)\), and \( 1 \le r \le \i  \).
\end{cor}  

\begin{proof} 
This follows directly from Theorems \ref{thm:stokes} and \ref{thm:perpcont}.
\end{proof}

\begin{cor}[Gauss-Green Divergence Theorem]\label{thm:div}
	Let \(1 \le k \le n \).  Then	\[ \cint_{\perp \p J} \o = \cint_J d  \star \o \] for all matching pairs  \( J \in \hB_{n-k+1}^{r-1}(U) \), \( \o \in \B_k^r(U)\), and \( 1 \le r \le \i  \).       
\end{cor}  

\begin{proof} 
This follows directly from Theorems \ref{thm:stokes} and \ref{thm:perpcont}.
\end{proof}  
\begin{examples} \mbox{} 
\begin{enumerate}
	\item  	If \( \widetilde{S} \) represents an oriented surface \( S \) with boundary in \( \R^3 \) and \( \o \) is a smooth \( 2 \)-form defined on \( S \),   Corollary \ref{thm:div} tells us \( \cint_{\widetilde{S}} d \star \o = \cint_{\perp \p \widetilde{S}} \o \) (see Fig. \ref{fig:Divergence3D}). This result equates the net divergence of \( \o \) within \( S \) with the net flux of \( \o \) (thought of as a \( 2 \)-vector field with respect to the inner product) across the boundary of \( S \).
	\item  Recall be the open set \( W \)  in   Figure \ref{fig:openset}.   Let \( \o \in \B_k^r(W) \) be a differential form which is identically equal to a smooth differential form \( \eta_N \) in the top lobe, and another differential form \( \eta_S \)  in the bottom lobe.  For example,  \( \o_N = -y^2dx/2 + x^2 dy/2 \) and \( \o_S = y^2dx/2 - x^2 dy/2 \). Then \( d\o_N =  (x+y) dx dy   \) and \( d\o_S = -(x+y) dxdy \).  So the net flux across the respective boundaries have opposite signs.  In one lobe, the flux is outward flowing, and in the other, it is inward flowing.  When we draw them together the flow moves from one lobe into the other and the velocity has an instantaneous change in direction and velocity as the flow crosses the frontier.     
\end{enumerate}

\end{examples} 
\begin{figure}[ht]
	\centering
    \includegraphics[height=2.0in]{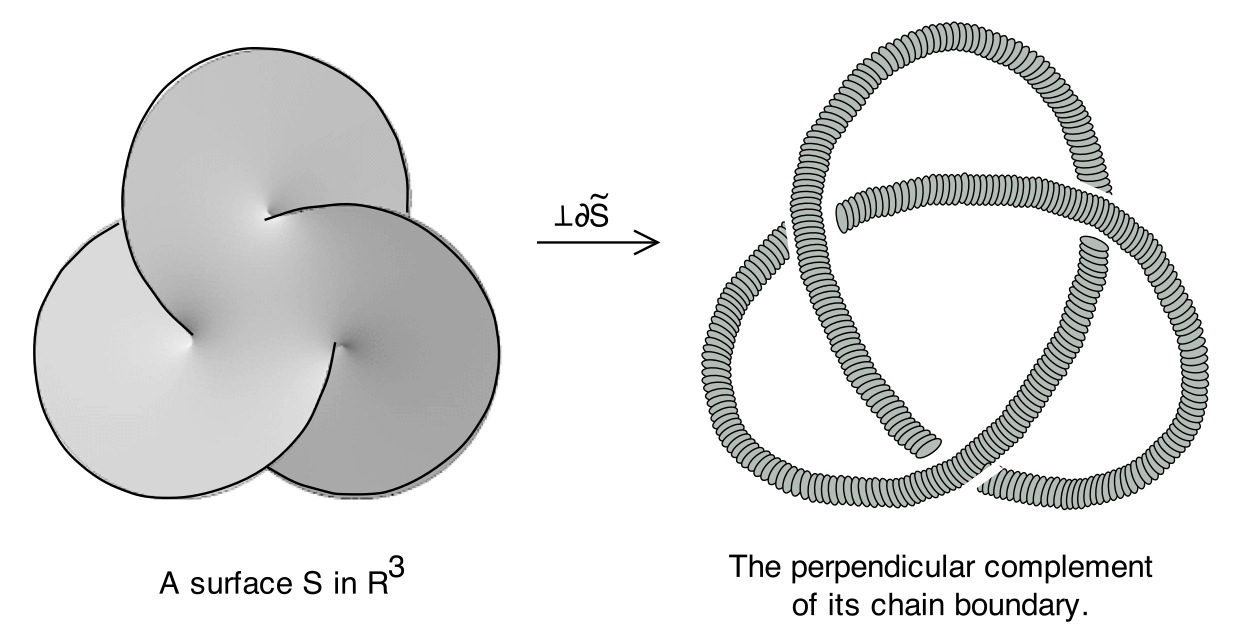}
    \caption{A domain of the divergence theorem for surfaces in \( \R^3 \)}
    \label{fig:Divergence3D}
\end{figure}

\subsection{Geometric coboundary, Laplace, and Dirac operators}\label{sub:geometric_coboundary_laplace_and_dirac_operators}    

In this section we introduce an extension of the divergence theorem \ref{thm:div} to take into account higher order derivatives.  As far as we know, it has no classical precedent. 
                
\begin{defn}\label{coboundary}          
	Define the \emph{\textbf{coboundary}} operator \( \lozenge := \perp \p \perp \). Since \( \p \) decreases dimension, \( \lozenge \) increases dimension. Its dual operator is the codifferential \( \d \) where \( \d = \star d \star \).
\end{defn}

\begin{defn}\label{laplace}
Define the \emph{\textbf{geometric Laplace operator}} \( \square := \lozenge \p + \p \lozenge \).	
\end{defn}
 Since \( \square: \hB_k^r(U) \to \hB_k^{r+1}(U) \) is a continuous operator on \( \hB_k(U) \), we may iterate \( \square \) any number of times \( \square^j = \underbrace{\square \circ \cdots \circ \square}_{j \mbox{ iterates}} \) and obtain a continuous operator \(  \square^j \) on \( \hB_k(U) \).    The dual operator is the classical Laplace operator \( \D = d \d + \d d \) on differential forms \( \B_k(U) \), and may also be iterated.  The \emph{geometric Dirac} operator \( \p + \lozenge \) dualizes to the Dirac operator \( d + \d \) on differential forms (\cite{hodge}). 

\begin{figure}[ht]
	\centering 
	\includegraphics[height=1.5in]{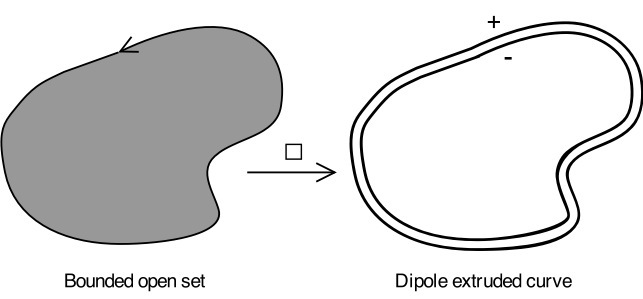} 
	\caption{Geometric Laplace operator \( \square \) of an open set in \( \R^2 \)} 
	\label{fig:Laplace} 
\end{figure} 

\begin{cor}[Higher order divergence theorem]\label{cor:lapl} 
Let \( j \ge 0 \) and \( 0 \le k \le n \).	Then\[ \cint_{\square^j J} \o = \cint_J \D^j \o \]  for all matching pairs  \( J \in \hB_k^r(U) \), \( \o \in \B_k^{r+2j}(U) \), and \( 0 \le r \le \i  \).   
\end{cor} 

Figure \ref{fig:Laplace} shows this Corollary \ref{cor:lapl} has interesting geometric meaning for \( j = 1 \). Whereas the divergence theorem equates net flux of a \( k \)-vector field across an oriented boundary with net interior divergence, Corollary \ref{cor:lapl} equates net flux of the ``orthogonal differential'' of a \( k \)-vector field across an oriented boundary with net interior ``second order divergence''. In this sense, Corollary \ref{cor:lapl} is a ``higher order divergence theorem''.    
 
The following diagram depicts the \emph{\textbf{exterior differential complex}}.  The ``boundary'' operators are boundary \( \p \) (boundary, predual to exterior derivative),  \( \lozenge \) (geometric coboundary, predual to \( *d* \)), extrusion \( E_v \), and retraction \( E_v^\dagger \).  The commutation relations are given in Propositions \ref{prop:EE}, \ref{pro:asgg},  \ref{prop:ppp}, \ref{prop:perpagain}, and Theorem \ref{thm:Gpush}.

\begin{equation}\label{diagramK1} 
	\xymatrixcolsep{4pc}
	\xymatrixrowsep{4pc}
	\xymatrix{
		\vdots \ar@<0.05in>@{->}[d]^{E_v^\dagger} \ar@{<-}[d]_(.52){E_v} \ar@{->}[rd]^<<<<<<<{\p}   & \vdots \ar@<0.05in>@{->}[d]^{E_v^\dagger} \ar@{<-}[d]_(.52){E_v}\ar@{->}[rd]^<<<<<<<{\p}   & \vdots \ar@<0.05in>@{->}[d]^{E_v^\dagger} \ar@{<-}[d]_(.52){E_v}\ar@{->}[rd]^<<<<<<<{\p}   &  \vdots\cdots \\ 
		\hB_2^0 \ar@<0.05in>@{->}[d]^{E_v^\dagger} \ar@{<-}[d]_(.52){E_v}\ar@{->}[rd]^<<<<<{\p}   \ar@<0.05in>@{->}[r]^{P_v} \ar@{->}[ru]^(.68){\lozenge}|\hole  & \hB_2^1 \ar@<0.05in>@{->}[d]^{E_v^\dagger} \ar@{<-}[d]_(.52){E_v}\ar@{->}[rd]^<<<<<{\p}   \ar@<0.05in>@{->}[r]^{P_v} \ar@{->}[ru]^(.68){\lozenge}|\hole & \hB_2^2  \ar@<0.05in>@{->}[d]^{E_v^\dagger} \ar@{<-}[d]_(.52){E_v} \ar@{->}[ru]^(.68){\lozenge}|\hole 	\ar@{->}[rd]^<<<<<{\p}   \ar@<0.05in>@{->}[r]^{P_v}   & \cdots \\
		\hB_1^0 \ar@<0.05in>@{->}[d]^{E_v^\dagger} \ar@{<-}[d]_(.52){E_v}\ar@{->}[rd]^<<<<<{\p}   \ar@<0.05in>@{->}[r]^{P_v}\ar@{->}[ru]^(.70){\lozenge}|\hole & \hB_1^1 \ar@<0.05in>@{->}[d]^{E_v^\dagger} \ar@{<-}[d]_(.52){E_v}\ar@{->}[rd]^<<<<<{\p}   \ar@<0.05in>@{->}[r]^{P_v}\ar@{->}[ru]^(.70){\lozenge}|\hole & \hB_1^2 \ar@<0.05in>@{->}[d]^{E_v^\dagger} \ar@{<-}[d]_(.52){E_v}\ar@{->}[rd]^<<<<<{\p}   \ar@<0.05in>@{->}[r]^{P_v}\ar@{->}[ru]^(.70){\lozenge}|\hole & \cdots \\ 
		\hB_0^0 \ar@<0.05in>@{->}[r]^{P_v}\ar@{->}[ru]^(.70){\lozenge}|\hole & \hB_0^1 \ar@<0.05in>@{->}[r]^{P_v}\ar@{->}[ru]^(.70){\lozenge}|\hole & \hB_0^2 \ar@<0.05in>@{->}[r]^{P_v}\ar@{->}[ru]^(.70){\lozenge}|\hole &\cdots
	} 
\end{equation} 
 
\subsection{Representatives of vector fields as differential chains} 
\label{sec:representatives_of_vector_fields}               
 \begin{defn}
We say that a \( k \)-chain \( \widetilde{X} \in \hB_k^r(U), \, r \ge 1 \) \emph{represents} a \( k \)-vector field \( X:U \to \L_k \) if \[ \cint_{\widetilde{X}} \o = \int_U \o(X(p)) dV \] for all \( \o \in \B_k^r(U) \). 

If \( X \) is a \( k \)-vector field, let \( E_X: \A_k(U) \to A_{k+\ell}(U) \) be the linear map determined by \( E_X := E_{V_\ell} \circ \cdots \circ E_{V_1} \) where \( X = V_1 \wedge \cdots \wedge V_\ell \). 	
\end{defn}
\begin{thm}\label{thm:jX} 
		If \( X \in {\cal V}^r(U) \) is a \( k \)-vector field where \( U \) is a bounded and open set for \(   r \ge 1 \), then there exists a differential \( k \)-chain \( \widetilde{X} \in \hB_k^r(U) \) which represents \( X \).
\end{thm}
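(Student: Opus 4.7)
The plan is to build $\widetilde{X}$ algebraically from the already-constructed chain $\widetilde{U}$ using the primitive operators. Write the $k$-vector field in an orthonormal basis as $X = \sum_I f_I\, e_I$, where $I$ ranges over strictly increasing multi-indices of length $k$ and (by the evident analogue of Lemma \ref{prop:vectornorm} for $k$-vector fields) each coefficient satisfies $\|f_I\|_{B^{r,U}} \le \|X^\flat\|_{B^{r,U}} < \infty$, so $f_I \in \B_0^r(U)$. Then define
\[
   \widetilde{X} \;:=\; \sum_I m_{f_I}\, E_{e_I}\,\!\perp\! \widetilde{U}.
\]

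To show $\widetilde{X} \in \hB_k^r(U)$, I would assemble the construction one operator at a time. By Theorem \ref{thm:opensetsandboundaries}, $\widetilde{U} \in \hB_n^1(U) \hookrightarrow \hB_n^r(U)$. The star theorem in open sets (Theorem \ref{thm:perpcontU}) then gives $\perp\!\widetilde{U} \in \hB_0^1(U)$. The iterated extrusion $E_{e_I}$, defined as a composition of the constant-vector operators $E_{e_i}$, extends continuously from $\A_k(U)$ to $\hB_k^r(U)$ by the open-set analogue of Lemma \ref{lem:IIA}, so $E_{e_I}\!\perp\!\widetilde{U} \in \hB_k^1(U)$. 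Finally, multiplication by $f_I \in \B_0^r(U)$ is a continuous endomorphism of $\hB_k^r(U)$ by Theorem \ref{thm:continfuncII}, giving $m_{f_I}E_{e_I}\!\perp\!\widetilde{U} \in \hB_k^r(U)$.

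To verify that $\widetilde{X}$ represents $X$, I would chase each summand through the dualities already established. For any $\o \in \B_k^r(U)$, Theorem \ref{cor:changeofden} gives $\cint_{m_{f_I}E_{e_I}\perp \widetilde{U}}\o = \cint_{E_{e_I}\perp \widetilde{U}} f_I \cdot \o$; iterated application of Theorem \ref{cor:extV} yields $\cint_{E_{e_I}\perp \widetilde{U}} f_I\cdot \o = \cint_{\perp\widetilde{U}} i_{e_I}(f_I\cdot \o)$; and Theorem \ref{thm:perpcontU} together with Theorem \ref{thm:opensetsandboundaries} gives
\[
   \cint_{\perp \widetilde{U}} i_{e_I}(f_I\cdot \o) \;=\; \cint_{\widetilde{U}} \star\bigl(i_{e_I}(f_I\cdot \o)\bigr) \;=\; \int_U f_I(p)\,\o(p)(e_I)\, dV.
\]
(The last equality uses that $i_{e_I}\o$ is the scalar function $p \mapsto \o(p)(e_I)$ for the $k$-covector of constant $k$-vector $e_I$, so its Hodge star is $f_I \o(e_I)\,dV$.) Summing over $I$ and using the linearity $\o(\sum_I f_I e_I) = \sum_I f_I \o(e_I)$ gives $\cint_{\widetilde{X}}\o = \int_U \o(X(p))\,dV$, as required.

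The main obstacle, as I see it, is purely notational: keeping track of the sign conventions in the iterated interior product $i_{e_I} = i_{e_{i_k}} \cdots i_{e_{i_1}}$ so that it agrees with evaluation $\o(e_I)$ on the corresponding simple $k$-vector, matched against the ordering convention for $E_{e_I} = E_{e_{i_\ell}} \circ \cdots \circ E_{e_{i_1}}$ from the paragraph following Lemma \ref{lem:IIA}. Once that bookkeeping is done (and it is merely bookkeeping, since any global sign can be absorbed into the definition of $E_{e_I}$), the proof is a clean composition of the previously established duality identities, and no new analytic estimates are required.
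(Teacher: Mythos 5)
Your construction $\widetilde{X}=\sum_I m_{f_I}E_{e_I}\!\perp\!\widetilde{U}$ and the chain of dualities (change of density, iterated extrusion against interior product, star theorem, then the Riemann integral via $\widetilde{U}$) are exactly the paper's proof, which defines $\widetilde{X}=\sum_I m_{f_I}E_{\a_I}\!\perp\!\widetilde{U}$ and verifies $\cint_{m_{f_I}E_{\a_I}\perp\widetilde{U}}\o=\cint_{E_{\a_I}\perp\widetilde{U}}f_I\o=\int_U f_I\,\o(p;\a_I)\,dV$ in the same way. The proposal is correct and takes essentially the same approach.
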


\begin{proof} 
	Let   \( \widetilde{U} \in \hB_n^1(U) \) the \( n \)-chain representing \( U \)  (\S\ref{poly}). Given   \( \a \in \L_k \)   then \( E_\a \perp \widetilde{U} \in \hB_k^1(U) \) represents the constant \( k \)-vector field \( (p;\a) \) defined over \( U \) since for all \( \o \in \B_k^1(U) \), we have \[ \cint_{E_\a \perp \widetilde{U}} \o = \cint_{\perp \widetilde{U}} i_\a \o = \cint_{\widetilde{U}} \star i_\a \o = \int_U \star i_\a \o = \int_U\o(p;\a) dV \] using Theorems  \ref{thm:extV},  \ref{thm:perpcont},  \ref{thm:opensets}, and the definition of interior product.  Since the norms decrease, it follows that \( E_\a \perp \widetilde{U} \in \hB_k^r(U)  \) also represents the constant $k$-vector field \( (p;\a) \), but using test forms in \( \B_k^r(U) \).  

	Now suppose \( X(p) = \sum_I (p; f_I(p)\a_I) \) is a $k$-vector field  in \( U \) where \( f_I \in \B_0^r(U) \). Then \( \widetilde{X} = \sum_I m_{f_I} E_{\a_I} \perp \widetilde{U} \in \hB_k^r(U) \) represents \( X \) since \[\cint_{m_{f_I}E_{\a_I}\perp \widetilde{U}} \o = \cint_{ E_{\a_I} \perp \widetilde{U}} f_I \o = \int_U f_I \o (p;\a_I) dV.\] Therefore, \[ \cint_{\widetilde{X}} \o = \int_U (\sum_I f_I \o) (p; \a_I) dV = \int_U \o(\sum_I (p; f_I(p)\a_I))dV = \int_U\o(X(p)) dV. \] 
\end{proof}   

For \( k = 0 \), we have found a \( 0 \)-chain \( \widetilde{f} \) representing a function \( f \in \B_0^1(U) \). That is, \[ \cint_{\widetilde{f}} g = \int_U f\cdot g dV = \int_U f \wedge \star g \] for all \( g \in \B_0^1(U) \).  As an example of this construction, we have the following corollary: 

\begin{cor}\label{cor:df} 
	Suppose \( f:U \to \R \) is a Lipschitz function where \( U \) is bounded and open in \( \R^n \). Then \[ \cint_{\perp \p \perp \widetilde{f}} \o = - \int_U df \wedge \star \o \] for all \( \o \in \B_1^2(U) \). 
\end{cor}

\begin{proof} 
	Let \( \o \in \B_1^2(U) \). By Theorem \ref{thm:perpcont} and Stokes' Theorem \ref{thm:stokes} we have \[ \cint_{\perp \p \perp \widetilde{f}} \o = \cint_{\widetilde{f}} \star d \star \o = \int_U f \wedge d \star \o = - \int_U df \wedge \star \o. \]  
\end{proof}

In closing, we remark that the definitions and results of this section readily extend to \( k \)-vector fields.

\section{Chainlets} 
\label{sub:chainlets}
Let \( \s = \s^s = \{u_1, \dots, u_s\} \) be a list of vectors in \( \R^n \) and \( P_{\s^s} (p;\a) = P_{u_1} \circ \cdots \circ P_{u_s} (p;\a) \) where \( (p;\a) \) is a simple \( k \)-element. We say \( p \) is the \emph{\textbf{support}}\footnote{a formal treatment of support will be given in \S\ref{sub:support}.} of \( P_{\s^s} (p;\a) \). Then the differential chain \( P_{\s^s} (p;\a) \in \hB_k^{s+1}(U) \) is called a \emph{\textbf{simple \( k \)-element of order}} \( s \). For \( k = 0 \), these are \emph{\textbf{singular distributions}} represented geometrically as differential chains, e.g., Dirac deltas \( (s=0) \), dipoles \( (s = 1) \), and quadrupoles \( (s=2 )\). For \( s \ge 0 \), let \( \A_k^s(p) \) be the subspace of \( \hB_k^{s+1}(U) \) generated by \( k \)-elements of order \( s \) supported at \( p \). Elements of \( \A_k^s(p) \) are called \emph{\textbf{Dirac \( k \)-chains of order \( s \) supported at \( p \)}}.  Let \( S(\R^n) \) be the symmetric algebra and \( S^s(\R^n) \) its \( s \)-th symmetric power. 

\begin{prop}\label{prop:symmet} 
	Let \( p \in U \).  The  vector space \( \A_k^s(p) \)  is isomorphic to  \( S^s(\R^n) \otimes \L_k(\R^n) \).
\end{prop}

\begin{proof} 
	Let \( k = 0 \). The linear map \( P_{u_s} \circ \cdots \circ P_{u_1}(p;\a)\mapsto u_s \circ \cdots \circ u_1 \otimes \a \) is an isomorphism preserving the symmetry of both sides since \( P_{u_1} \circ P_{u_2} = P_{u_2} \circ P_{u_1} \) and \( P_{u_1} \circ (P_{u_2} \circ P_{u_3}) = (P_{u_1} \circ P_{u_2}) \circ P_{u_3} \).
\end{proof} 

\begin{remark}\label{notation}
We may therefore use the notation \( (p;u \otimes \a) = P_u(p;\a) \), and, more generally, \( (p; \s \otimes \a) = (p;(u_s \circ \cdots \circ u_1 )\otimes \a) = P_{u_s} \circ \cdots \circ P_{u_1}(p;\a)  = P_\s (p;\a) \) where \( \s = u_s \circ \cdots \circ u_1 \). For example, \( P_u(p;\s \otimes \a) = (p; (u \circ \s) \otimes \a) \). We will be using both notations, depending on the application.  While \( P_\s \) emphasizes the operator viewpoint, the tensor product notation reveals the algebra of the Koszul complex \( \oplus_{s=0}^\i \oplus_{k=0}^n S^s \otimes \L_k \) more clearly. For example, the scalar \( t \) in \( t(\s \otimes \a) = t \s \otimes \a = \s \otimes t \a\) ``floats,'' while this is not as clear when using the operator notation.	
\end{remark} 

For example, in tensor notation, \( \p_{e_1}(p; 1\otimes e_1) = (p; e_1 \otimes 1) \), \( \p_{e_1}(p;1 \otimes e_1 \wedge e_2) = (p;e_1 \otimes e_2) \) and \( \p_{e_2}(p; 1 \otimes e_1 \wedge e_2) = (p;-e_2 \otimes e_1) \).

\begin{defn}[Unital associative algebra at \( p \)]\label{def:symmetric_algebra}
	Define \( \A_\bullet^\bullet(p):= \bigoplus_{k=0}^n \bigoplus_{s\geq 0} \A_k^s(p) \). 
	Let \( \cdot \) be the product on higher order Dirac chains \( \A_\bullet^\bullet(p)   \) given by \( (p;\s \otimes \a ) \cdot (p; \t \otimes \b) := (p; \s \circ \t \otimes \a \wedge \b) \).	
\end{defn}

\begin{remarks}\mbox{}
	\begin{itemize}
	 	\item For each \( p \in U \) \( \A_\bullet^\bullet(p) \) is a unital, associative algebra with unit \( (p;1) \). 
	 	\item This product is not continuous and does not extend to the topological vector space \( \hB(U) \).
	 	\item \( E_v^\dagger \) is a graded derivation on the algebra \( \A_\bullet^\bullet(p) \). Neither \( E_v \) nor \( P_v \) are derivations. 
	\end{itemize}
\end{remarks}  

 Let \(  \A_k^s(U)  \) be the free space \( \R\<\cup_p  \A_k^s(p) \> \).  In particular, an element of \( \A_k^s(U)  \) is a formal sum \( \sum_{i=1}^m (p_i; \s^i \otimes \a_i) \) where \( p_i \in U \) (\S\ref{notation}).  It follows from Proposition \ref{prop:symmet} that \( \A_k^s(U) \) is isomorphic to the free space \( (S^s \otimes \L_k)\<U\> \). Observe that \( \p(\A_k^s(U)) \subset \A_{k-1}^{s+1}(U) \mbox{ for all }  k \ge 1  \) and \( s \ge 0 \).

 Now \( \A_k^s(U) \) is naturally included in \( \hB_k^{s+1}(U) \), and is thus endowed with the subspace topology\footnote{The inner product we chose on \( \R^n \) induces an inner product on the exterior algebra \( \<\cdot,\cdot\>_\wedge \) using determinant (Definition \ref{def:mass}).  It induces an inner product on the  the symmetric algebra \( \<\cdot,\cdot\>_\circ \) using permanent of a matrix \( \<\s,\t\>_{\circ} := \per(\<u_r, v_s\>) \), and thus on  \( \A_k^s(p) \) via \( \<p;\s \otimes \a ), (p; \t \otimes \b) \>_\otimes :=
\< \s , \t \>_\circ \< \a, \b\>_\wedge \).   However, this inner product \( \<\cdot,\cdot\>_\otimes \) on \( \A_k^s(p) \) does not extend to a continuous inner product on \( \A_k^s(U) \), although it can be useful for computations on Dirac chains of arbitrary order and dimension as long as limits are not taken.}. 
\begin{defn}
Let \( \ch_k^s(U) := \overline{(\A_k^s(U), B^{s+1})} \).   Elements of   \(  \ch_k^s(U) \)  are called $k$-\emph{\textbf{chainlets}}\footnote{Differential chains of class \( \B \) were originally called chainlets.  It is only recently that the author has begun to appreciate the importance of what we now call the chainlet complex, which is a subcomplex of the differential chain complex. }  \emph{\textbf{of order}} \( s \).  	
\end{defn} In the author's experience the chainlet complex \(  \ch_k^s(U) \subset  \hB_k^{s+1}(U)  \) is often sufficient for examples and applications.

\begin{defn}\label{reduction}
	 Define the bilinear map, called  \emph{\textbf{reduction}}, by its action on simple \( k \)-elements of order \( s \): 
	\begin{align*}
	P^\dagger: {\cal V}^r(U) \times \A_k^s(U) &\to \A_k^{s-1}(U) \\	(p; u_1 \circ \cdots \circ u_s \otimes \a)
		&\mapsto \sum_{i=1}^s  \<V(p),u_i\> (p; u_1 \circ \cdots \circ
		\widehat{u_i} \circ \cdots \circ u_s).
	\end{align*}  Let \( P_V^\dagger(p;\s \otimes \a) := P^\dagger(V, (p;\s \otimes \a)) \).  
\end{defn} 
Although this operators extends to \( \A_k^s(U) \) by linearity, it is not continuous and does not extend to \( {\cal C\!h}_k^s(U) \).

\section{Cartesian wedge product} 
\label{sec:cartesian_wedge_product}

\subsection{Definition of $ \hat{\times} $}
\label{Cartesian2}

Suppose \( U_1  \subset \R^n \) and \( U_2 \subset \R^m \) are open sets.  For \( p\in U_1 \) and \( q\in U_2 \), let \( \iota_1^q:  U_1 \to U_1 \times U_2  \) and \( \iota_2^p: U_2 \to U_1 \times U_2   \) be the inclusions \(  \iota_1^q(x) = (x,q)  \) and \(  \iota_2^p(y) = (p,y) \).   Let \( \pi_i:U_1 \times U_2 \to U_i \), \( i=1,2 \), be the usual projections. Define \( \hat{\times}: \A_k(U_1) \times \A_\ell(U_2) \to \A_{k+\ell}(U_1 \times U_2) \) by setting \[ \hat{\times}((p;\a), (q;\b)) := ((p,q); \iota_{1*}^q\a \wedge \iota_{2_*}^p\b), \] where \( (p;\a)\in {\cal A}_k(U_1) \) and \( (q; \b)\in {\cal A}_\ell(U_2) \) are \( k \)- and \(\ell\)-elements, respectively, and extend bilinearly. We call \( P \hat{\times} Q := \hat{\times}(P,Q)\) the \emph{Cartesian wedge product} of \( P \) and \( Q \).       We next show that Cartesian wedge product is continuous.  

\begin{prop} \label{prop:Cartesian} 
	Suppose \( P \in \A_k(U_1) \) and \( Q \in \A_{\ell}(U_2) \) are Dirac chains where \( U_1 \subseteq \R^n, U_2 \subseteq \R^m \) are open. Then \( P \hat{\times} Q \in \A_{k +\ell}(U_1 \times U_2) \) with 
	\[ \|P \hat{\times} Q\|_{B^{r+s, U_1 \times U_2}} \le \|P\|_{B^{r,U_1}}\|Q\|_{B^{s,U_2}}. \] 
\end{prop}

\begin{proof}  
 Choose \( \e > 0 \) and let \( \e' = \e/(\|P\|_{B^{r,U_1}} + \|Q\|_{B^{s,U_2}} +1) < 1 \). There exist decompositions \( P = \sum_{i=0}^{m_1} \D_{\s_i^{j_1(i)}}(p_i;\a_i) \) where \(  0\le j_1(i)\le r \) and \( Q = \sum_{j=0}^{m_2} \D_{\t_j^{j_2(j)}}(q_j;\b_j) \) where \(  0\le j_2(j)\le s \), \(  \D_{\s_i^{j_1(i)}}(p_i;\a_i) \) is inside \( U_1 \), \( \D_{\t_j^{j_2(j)}}(q_j;\b_j) \) is inside \( U_2 \) and such that \( \|P\|_{B^r,U_1} >  \sum_{i=0}^{m_1} \|\s_i^{j_1(i)}\|\|\a_i\| - \e  \)  and \( \|Q\|_{B^r,U_2} >  \sum_{j=0}^{m_2} \|\t_j^{j_2(j)}\|\|b_j\| - \e \).  

Thus \(  \D_{\s_i^{j_1(i)}}(p_i;\a_i) \hat{\times} \D_{\t_j^{j_2(j)}}(q_j;\b_j) \) is inside \( U_1 \times U_2 \) and \( P \hat{\times} Q = \sum_{i=0}^{m_1}\sum_{j=0}^{m_2}  \D_{\s_i^{j_1(i)}}(p_i;\a_i) \hat{\times} \D_{\t_j^{j_2(j)}}(q_j;\b_j) \).  Then 
\begin{align*} 
\|P\hat{\times} Q\|_{B^{r+s,U_1\times U_2}} &\le\sum_{i=0}^{m_1}\sum_{j=0}^{m_2} \|\D_{\s_i^{j_1(i)}}(p_i;\a_i) \hat{\times} \D_{\t_j^{j_2(j)}}(q_j;\b_j)\|_{B^{r+s,U_1\times U_2}}  \\&\le \sum_{i=0}^{m_1}\sum_{j=0}^{m_2}  \|\s_i^{j_1(i)}\|\|\t_j^{j_2(j)}\|\|\a_i\|\|\b_j\| \\&\le   (\|P\|_{B^{r,U_1}} +\e)(\|Q\|_{B^{s,U_2}} +\e)
\end{align*} 
Since this holds for all \( \e > 0 \), the result follows.      
\end{proof}  

Let \( J \in \hB_k^r (U_1) \) and \( K \in \hB_\ell^s(U_2) \). Choose Dirac chains \( P_i \to J \) converging in \( \hB_k^r (U_1) \), and \( Q_i \to K \) converging in \( \hB_\ell^s(U_2) \). Proposition \ref{prop:Cartesian} implies that \( \{P_i \hat{\times} Q_i\} \) is Cauchy.

\begin{defn}
	Define \[ J \hat{\times} K:= \lim_{i \to \i} P_i \hat{\times} Q_i. \]	
\end{defn} 

\begin{thm}\label{thm:cart} 
	Cartesian wedge product \( \hat{\times}: \hB_k^r(U_1) \times
\hB_\ell^s(U_2) \to \hB_{k+\ell}^{r+s}(U_1 \times U_2) \) is associative,
bilinear and continuous for all open sets \( U_1 \subseteq \R^n, U_2 \subseteq \R^m \) and
satisfies
\begin{enumerate} 
		\item \( \|J\hat{\times} K\|_{B^{r+s, U_1 \times U_2}} \le \|J\|_{B^{r,U_1}} \|K\|_{B^{s,U_2}} \)  for \( 0 \le r,s < i \);
		\item
\begin{align*} 
\p(J \hat{\times} K) =\begin{cases} ( \partial J) \hat{\times} K + (-1)^k J \hat{\times} ( \partial K), &k > 0, \ell >0 \\ ( \partial J) \hat{\times} K, &k > 0, \ell = 0\\ J \hat{\times} ( \partial K), &k = 0, \ell > 0 \end{cases} 
\end{align*} 
		\item \( J \hat{\times} K = 0 \) implies \( J = 0 \) or \( K = 0 \);  
		\item \( (p; \s \otimes \a) \hat{\times} (q; \t \otimes \b) = ((p,q); \s
\circ \t \otimes \iota_{1*}\a \wedge \iota_{2*}\b) \) where \( \s \otimes \a \) and \( \t \otimes \b \) are elements of the Koszul complex \( \oplus_{s=0}^\i \oplus_{k=0}^n S^s \otimes \L_k \) (Remark \ref{notation});
		\item  \( (\pi_{1*}\o \wedge \pi_{2*}\eta)(  J \hat{\times}   K) = \o(J)\eta(K) \)  for \( \o \in \B_k^r(U_1), \eta \in \B_\ell^s(U_2) \);
		\item \( \supp(J \hat{\times} K) = \supp(J) \times \supp(K) \).
\end{enumerate}
\end{thm}

\begin{proof} 
(a):  This is a consequence of Proposition \ref{prop:Cartesian}.

(b):  This follows from   Theorem \ref{thm:bod}:
 \( \p ((p,q); \a \wedge \b) = (\p ((p,q);\a)) \cdot ((p,q); \b) +
(-1)^k ((p,q);\a) \cdot (\p ((p,q);\b))  =(\p (p;\a)) \hat{\times} (q; \b) +
(-1)^k (p;\a) \hat{\times} (\p (q;\b)) \), for all \( (p;\a) \in \A_k(U_1), \text{ and } (p;\b)\in {\cal P}_\ell(U_2) \). The boundary relations for Dirac chains follow by linearity. We know  \(\hat{\times} \) is continuous by (a) and \( \p \) is continuous by Theorem \ref{thm:bod}, and therefore the relations extend to \( J \in\hB_k^r(U_1), K \in
\hB_\ell^s(U_2) \), or \( J \in  
\hB_k(U_1) \) and \( K \in \hB_\ell(U_2) \).

 (c): Suppose \( J \hat{\times} K = 0 \), \( J \ne 0 \) and \( K \ne 0 \). Let \( \{e_i\} \) be an orthonormal basis of \( \R^{n+m} \), respecting the Cartesian wedge product. Suppose \( e_I \) is a \( k \)-vector in \( \L_k(\R^n) \) and \( e_L \) is an \( \ell \)-vector in \( \L_\ell(\R^m) \). Then
\begin{equation*} \cint_Q \left( \cint_P f de_I
\right) g de_L = \cint_{P \hat{\times} Q} h de_I de_L 
\end{equation*} 
where \( h(x,y) = f(x) g(y) \), \( P \in \A_k(U_1), Q \in \A_\ell(U_2) \). The proof follows easily by writing \( P = \sum_{i=1}^r (p_i; \a_i) \) and \( Q = \sum_{j=1}^s(q_i; \b_i) \) and expanding. By continuity of Cartesian wedge product \( \hat{\times} \) and the integral, we deduce
\begin{equation*} \cint_K \left( \cint_J f de_I \right) g de_L = \cint_{J \hat{\times} K}
h de_I de_L 
\end{equation*} 
where \( h(x,y) = f(x) g(y) \), \( J \in \hB_k(U_1), K \in \hB_\ell(U_2) \). Since \( J\ne 0 \) and \( K \ne 0 \), there exist \( f \in \B_0^(U_1), g \in \B_0(U_2) \) such that \( \cint_J fde_I \ne 0 \), \( \cint_K gde_L \ne 0 \). Therefore, \( \cint_K \left( \cint_J f de_I \right) g de_L = \cint_J f de_I \cint_K g de_L \ne 0 \), which implies \( \cint_{J \hat{\times} K} h de_I de_L \ne 0 \), contradicting the assumption that \( J \hat{\times} K = 0 \).

 (d): We first show that \( (p; \s \otimes \a) \hat{\times} (q; \b) = ((p;q); \s \otimes \a \wedge \b) \). The proof is by induction on the order \( j \) of \( \s \). This holds if \( j = 0 \), by definition of \( \hat{\times} \). Assume it holds for order \( j-1 \). Suppose that \( \s \) has order \( j \). Let \( \s = u \circ \s' \). By Proposition \ref{prop:Cartesian} 
\begin{align*} (p; \s \otimes \a) \hat{\times} (q; \b) &= (\lim_{t \to 0} (p +tu; \s' \circ \a/t) - (p; \s' \circ\a/t)) \hat{\times} (q;\b) \\&= \lim_{t \to 0} (p +tu; \s' \circ \a/t)\hat{\times} (q;\b) - (p; \s' \circ \a/t) \hat{\times} (q;\b) \\&= \lim_{t \to 0} ((p+tu,q);\s' \circ \a \wedge \b/t) - ((p,q); \s' \circ \a \wedge \b/t) \\&= P_u((p,q); \s' \circ \a \wedge \b) = ((p,q); \s' \circ u \otimes \a \wedge \b) = ((p,q); \s \circ \a\wedge \b). \end{align*} 
In a similar way, one can show \( (p; \s \otimes \a) \hat{\times} (q; \t \otimes \b) = ((p;q); \s \circ \t \otimes \a \wedge \b) \) using induction on the order \( j \) of \( \s \).

(e): This follows from the definition of \( \hat{\times} \) for simple elements: 
\begin{align*} (\pi^{1*}\o \wedge \pi^{2*}\eta)( (p;\a) \hat{\times} (q;\b)) &= (\pi^{1*}\o \wedge \pi^{2*}\eta)((p,q); \iota_{1*}\a \wedge \iota_{2_*}\b) \\&=\o(p;\a)\eta(q;\b).
\end{align*} 
This extends to Dirac chains \( A \hat{\times} B \) by first fixing \( A \) and extending linearly in the second variable, and then fixing \( B \) and extending linearly in the first variable.    Continuity follows by Proposition \ref{prop:Cartesian}.

(f): This follows from (e), the definition of support, and the definition of \( \hat{\times} \).
\end{proof}   

Cartesian wedge product extends to a continuous bilinear map \( \hat{\times}: \hB_k(U_1) \times \hB_\ell(U_2) \to \hB_{k+\ell}(U_1 \times U_2)\)  and the relations (b)-(f) continue to hold. 

\begin{cor}\label{cor:carte}
If \( \{e_1, \dots, e_n\} \) is a basis of \( \R^n \) and \( \{f_1, \dots, f_m\} \) is a basis of \( \R^m \), then \( \p_{e_i}(J \hat{\times} K) =  (\p_{e_i} J )\hat{\times} K \) and \( \p_{f_j}(J \hat{\times} K) = (-1)^{\dim K} J \hat{\times} \p_{f_j} K \) for all 	\( J \in\hB_k^r(U_1) \mbox{ and } K \in
	\hB_\ell^s(U_2) \), or \( J \in  
	\hB_k(U_1) \) and \( K \in \hB_\ell(U_2) \).   
\end{cor}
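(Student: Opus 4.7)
The plan is to verify the identities first on simple elements $(p;\a)\hat{\wedge}(q;\b) = ((p,q); \iota_{1*}\a\wedge \iota_{2*}\b)$, then pass to Dirac chains by bilinearity, and finally pass to all of $\hB_k^r(U_1)\times\hB_\ell^s(U_2)$ and its inductive limit by continuity. Writing $\p_v = P_vE_v^\dagger$ separates the problem into a Leibniz-type rule for $E_v^\dagger$ and a transport rule for $P_v$, each of which can be dealt with in isolation.

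The key geometric input is the orthogonality $\<\iota_{1*}e_i,\iota_{2*}f_j\>=0$ for all $i,j$, combined with the fact that $E_v^\dagger$ is an antiderivation of the exterior algebra (a consequence of the adjoint relation in Proposition \ref{pro:asgg}(a), since its dual $v^\flat\wedge(\cdot)$ is a derivation). For the first factor, orthogonality kills the cross term:
\[
E_{\iota_{1*}e_i}^\dagger(\iota_{1*}\a\wedge \iota_{2*}\b) = \iota_{1*}(E_{e_i}^\dagger\a)\wedge \iota_{2*}\b,
\]
so that $E_{\iota_{1*}e_i}^\dagger((p;\a)\hat{\wedge}(q;\b)) = (E_{e_i}^\dagger(p;\a))\hat{\wedge}(q;\b)$. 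For the second factor direction $\iota_{2*}f_j$, the \emph{first} term drops out by the same orthogonality, and what remains is the Koszul-signed term from moving $E_v^\dagger$ past $\iota_{1*}\a$.

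Next I apply $P_{\iota_{1*}e_i}$ (respectively $P_{\iota_{2*}f_j}$) to both sides. Here I invoke Theorem \ref{cart}(e), which in the Koszul notation $(p;\s\otimes\a)\hat{\wedge}(q;\t\otimes\b) = ((p,q);\s\circ\t\otimes \iota_{1*}\a\wedge \iota_{2*}\b)$ says that prederivative by a first-factor vector simply adjoins that vector to $\s$; equivalently, $P_{\iota_{1*}e_i}(J'\hat{\wedge} K) = (P_{e_i}J')\hat{\wedge} K$, and symmetrically for the second factor. Composing with the $E_v^\dagger$ step yields both identities on simple elements (with whichever Koszul sign the antiderivation rule dictates), and bilinearity of $\hat{\wedge}$ extends them to all Dirac chains $A\in \A_k(U_1)$, $B\in\A_\ell(U_2)$.

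For the final extension, let $A_i\to J$ in $\hB_k^r(U_1)$ and $B_i\to K$ in $\hB_\ell^s(U_2)$. By Theorem \ref{cart}(a) we have $A_i\hat{\wedge} B_i\to J\hat{\wedge} K$ in $\hB_{k+\ell}^{r+s}(U_1\times U_2)$, and continuity of $P_v$ and $E_v^\dagger$ (Theorems \ref{thm:predfirst} and \ref{thm:retX}) allows $\p_{e_i}$ and $\p_{f_j}$ to be moved across the limit on both sides. The same argument, applied to representatives of $J\in \hB_k(U_1)$ and $K\in\hB_\ell(U_2)$ in the Banach subspaces that generate the inductive limits, handles the $\hB(U)$ case via Theorem \ref{thm:continuousoperatorsU}. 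I anticipate no substantial obstacle: the entire argument is Koszul sign bookkeeping on generators followed by invocations of continuity already in hand, with the single nontrivial geometric ingredient being the orthogonality of $\iota_{1*}\R^n$ and $\iota_{2*}\R^m$.
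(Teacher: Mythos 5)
Your argument is correct in substance and reaches the result by a more explicit route than the paper. The paper's own proof is a one-line appeal to the Leibniz rule of Theorem \ref{cart}(c) --- in its directional form for \( \p_v \), which is only asserted in the remarks following that theorem --- together with the observations \( \p_{e_i}K = 0 \) and \( \p_{f_j}J = 0 \); those vanishing statements are exactly your orthogonality \( \<\iota_{1*}e_i,\iota_{2*}f_j\>=0 \) in disguise. You instead factor \( \p_v = P_vE_v^\dagger \) and verify the two pieces separately on generators: the antiderivation identity for \( E_v^\dagger \) with the cross term killed by orthogonality, then the transport rule for \( P_v \) via Theorem \ref{cart}(e), followed by bilinearity and continuity. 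This buys something real: the directional Leibniz rule is never actually proved in the paper, so your computation supplies the verification that the paper's citation leaves implicit. Two caveats. First, your justification that \( E_v^\dagger \) is an antiderivation ``because its dual \( v^\flat\wedge(\cdot) \) is a derivation'' is off --- wedging with a \( 1 \)-form is not a derivation of the algebra of forms; the identity \( E_v^\dagger(\a\wedge\b) = (E_v^\dagger\a)\wedge\b + (-1)^{k}\a\wedge E_v^\dagger\b \) (with \( k \) the degree of \( \a \)) should instead be read off directly from the alternating-sum definition of retraction. Second, you leave the sign as ``whichever the Koszul rule dictates'': carrying the bookkeeping through gives \( (-1)^{\dim J} \) in the second identity, consistent with the sign \( (-1)^k \), \( k=\dim J \), in Theorem \ref{cart}(c), whereas the corollary as printed says \( (-1)^{\dim K} \). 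Your method, pushed to completion, would have exposed this discrepancy; as written, the proposal neither confirms nor corrects the stated sign, and that is the one point at which it falls short of a finished proof.
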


\begin{proof} 
This follows from Theorem \ref{thm:cart} since \( \p_{e_i} K = 0 \) and \( \p_{f_j}J = 0 \).
\end{proof}

\begin{example}\label{carte} 
Recall that if \( A \) is affine \( k \)-cell in \( U_1 \), then \( A \) is represented by an element \( \widetilde{A} \in \hB_k(U_1) \). If \( A \) and \( B \) are affine \( k \)- and \( \ell \)-cells in \( \R^n \) and \( \R^m \), respectively, then the classical Cartesian product \( A \times B \) is an affine \( (k+\ell) \)-cell in \( \R^{n+m} \). The chain \( \widetilde{A \times B} \) representing \( A \times B \) satisfies \( \widetilde{A \times B} = \widetilde{A} \hat{\times} \widetilde{B} \).
\end{example}

\begin{remarks} \mbox{}
	\begin{itemize} 
			\item The boundary relations hold if we replace boundary \( \p \) with the directional boundary \( \p_v = P_v E_v^\dagger \) for \( v \in \R^{n+m} \).
			\item  According to Fleming (\cite{fleming}, \S 6), ``It is not possible to give a satisfactory definition of the cartesian product \( A \times B \) of two arbitrary flat chains.''  Fleming defines   ``Cartesian product'' on polyhedral chains, and this coincides with our Cartesian wedge product according to Example \ref{carte}. 
	\end{itemize} 
\end{remarks} 
  
A refinement of Cartesian wedge product is important for applications of this theory to Plateau's problem \cite{plateau10}.   
\begin{defn} \label{def:masschain}
		Let \( J \in \hB_k^r(U) \). Define \[ M(J) := \inf\{ \liminf \|A_i\|_{B^0}: A_i \to J \mbox{ in the } B^r \mbox{ norm }\}. \]
	\end{defn}   An important example is \( J = \widetilde{(a,b)} \in \hB_1^1 \). It is not hard to prove that \( M(J) = b-a \).     
	 
   Recall the translation operator \( T_v \) and \( \D_v := T_v - Id \).  
  \begin{lem}\label{lem:vmass}
 		    \( \|\D_v J\|_{B^r} \le \|v\|M(J) \)  for all \( J \in \hB_k^r(U) \) and \( r \ge 1 \).    
 		\end{lem} 
 	
 		\begin{proof} This follows since \( \|\D_v A\|_{B^r} \le \|v\|\|A\|_{B^{r-1}} \le \|v\|\|A\|_{B^0} \) for Dirac chains \( A \) and by the definition of \( M(J) \).  
 		\end{proof} 

We can refine the estimate if the mass of one of the chains is finite. 		  
   	
	\begin{prop}\label{prop:productfinitemass}
	Suppose \( J \in \hB_k^1(U_1) \) satisfies \( M(J) < \i \) and  \( K \in \hB_\ell^r(U_2) \). Then  \[ \|J\hat{\times} K\|_{B^{r, U_1 \times U_2}} \le M(J) \|K\|_{B^{r,U_2}}. \]
	\end{prop} 
	
	\begin{proof} 
		Since \( (p;\a)  \hat{\times} (q; \b) = ((p,q); \a \wedge \b) \) it follows that \(   (p;\a) \hat{\times} K  = (p;1) \hat{\times} E_\a K \in \hB_{\ell+1}^1(U_1 \times U_2) \).  Hence 
		\[ \| (p;\a) \hat{\times} K\|_{B^{r, U_1 \times U_2}} = \|(p;1) \hat{\times} E_\a K\|_{B^{r, U_1 \times U_2}} = \|E_\a K\|_{B^{r, U_2}} \le \|\a\|\|K\|_{B^{r, U_2}}. \]

		 Using  Lemma \ref{lem:vmass}	
				\begin{align}\label{eq:deltas} \| J \hat{\times} \D_u(p; \a) \|_{B^{r, U_1 \times U_2}}  = \|\D_u  E_\a J)\|_{B^{r,U_2}}  \le \|\a\|\|\D_u   J)\|_{B^{r,U_2}} \le  \|\a\|\|u\|M(J).  
				\end{align}   
		 Let \( A \) be a Dirac \( k \)-chain. Given \( \e > 0 \) we can use Definition \ref{def:norms} to write 
			 \begin{align} \label{eq:b1}
			        \|A\|_{B^r} > \sum_{i=0}^r \sum_{i=0}^m \|\s_i^{j(i)}\|\|\a_i\|- \e.
			 \end{align} where \(  A=\sum_{i=0}^r \D_{\s_i^{j(i)}}(p_i;\a_i) \).
				Using \eqref{eq:deltas} and  \eqref{eq:b1}  we deduce 
						\begin{align}\label{eq:hattimes}
						  \|J \hat{\times} A\|_{B^{r, U_1 \times U_2}} &\le  \sum_{i=0}^r \| J \hat{\times} \D_{s_i^{j(i)}}(p_i;\a_i) \|_{B^{r, U_1 \times U_2}}   \\&\le   M(J) \sum_{i=0}^r \|\s_i^{j(i)}\| \|\a_i\|  \notag\\&< M(J)( \|A\|_{B^r} + \e).   \notag
			 \end{align}		

				 Suppose \( A_i \to K \) are Dirac chains converging in the \( B^{r,U_1} \) norm.  We show \( \{ J \hat{\times} A_i\} \) is Cauchy in the \( B^{r,U_1 \times U_2} \) norm:  Using \eqref{eq:hattimes} and since \( \{A_i\} \) is Cauchy we have 
				\[
					\|J \hat{\times} A_i -  J \hat{\times} A_j\|_{B^{r, U_1 \times U_2}}  = \|J \hat{\times} (A_i-A_j )\|_{B^{r, U_1 \times U_2}}  \le  M(J)\|A_i -A_j\|_{B^{r,U_1}}.
				\]
				Since \( J \hat{\times} K := \lim_{i \to \i}  A_i \hat{\times} K \), it follows from \eqref{eq:hattimes} that 
		\begin{align*}
		    \|J \hat{\times} K  \|_{B^{r,U_1 \times U_2  }} &= \lim_{i \to \i}\| A_i \hat{\times} K\|_{B^{r,U_1 \times U_2}} \\&\le M(J) \lim_{i \to \i}\|A_i\|_{B^{r,U_2}} \\&=  M(J)\|K\|_{B^{r,U_2}}, 
		\end{align*} 
		  as desired. 
	\end{proof}
	This result is used in \cite{plateau10} to define the cone operator on \( K \in \hB_{\ell}^r  \) by setting \( J = \widetilde{(a,b)} \).   	  
	
\section{Fundamental theorems of calculus for chains in a flow} 
\label{sec:fundamental_theorems_of_calculus_for_chains_in_a_flow}

\subsection{Evolving chains} 
\label{sub:evolving_chains}

Let \( J \in \hB_k^r(U) \) have compact support in \( U \) and \( V \in {\cal V}^r(U)\) where \( U \) is open in \( \R^n \) where \( 0 \le r \le \i \). Let \( V_t \) be the time \( t \) map of the flow of \( V \) and \( J_t:= V_{t*}J \). For each \( p \in U \), the image \( V_t(p) \) is well-defined in \( U \) for sufficiently small \( t \). Since \( J \) has compact support, there exists \( t_0> 0 \) such that pushforward \( V_{t*} \) is defined on \( J \) for all \(0 \le t < t_0 \). Let \( \theta: U \times [0,t_0) \to U \) be given by \( \theta(p,t) := V_t(p) \). Then  \( \theta \in {\cal M}^{r+1}(U \times [0,t_0), U) \). By Theorem \ref{thm:opensets}, Corollary \ref{cor:pull}, Theorem \ref{thm:retX}, and Theorem \ref{thm:cart} we deduce  
\begin{equation}\label{FTCPlus}  
[J_t]_a^b := \theta_* E_{e_{n+1}}^\dagger ( J \hat{\times} \widetilde{(a,b)} ), 
\end{equation}  
for \(  [a,b] \subset [0,t_0) \), is a well-defined element of \( \hB_k^r(U \times (a,b)) \) where \( \widetilde{(a,b)} \) is the chain representing \( (a,b) \), and \( e_{n+1} \in \R^{n+1} \) is unit. Thus \( [J_t]_a^b \) is a single \( k \)-chain, the \emph{\textbf{flowing chain
 of \( J \)}},  which is distinct from the collection of \( k \)-chains \( \{J_t\}_{a \le t \le b} \), called an \emph{\textbf{evolving chain}} in the flow of the vector field \( V \).    

\begin{figure}[ht]
  	\centering
  		\includegraphics[height=1.75in]{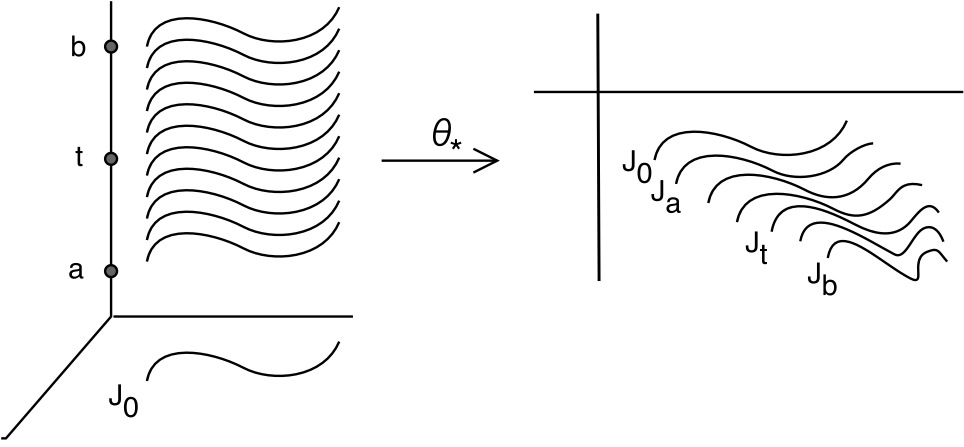}
  	\caption{A flowing chain}
  	\label{fig:MovingChains}
\end{figure}
      
\begin{examples} \mbox{}
		\begin{enumerate}
			\item Let \( J = (p; 1) \) where \( p \in U \) and \( X \) a Lipschitz vector field.  Then \( [J_t]_0^{t_0}   \) represents the parametrized path of \( p \) along the integral curve of \( X \) through \( p \) and is an element of \( \hB_0^1(U) \) while  \( [(p; X(p))_t]_0^{t_0}  \) represents the one-dimensional version of the curve and is an element of \( \hB_1^1(U) \).  
			\item Let \( A  \in \A_0^s(U) \).  For \( s = 0 \), then \( [A_t]_0^{t_0}  \) is the sum of representatives of finitely many parametrized paths starting at the points in the support of \( A \). Of course, \( t_0 \) must be chosen so that each path is contained in \( U \).  For \( s = 1 \), we obtain the path of the support of a dipole \( (p; u \otimes 1) \) as it moves in \( U \).   
			\item  Pushforward  modifies the mass of a \( k \)-element according to \( ( F_* (p;\a) = (F(p); F_{p *} \a) \).  However,   \( \perp F_* \perp (p; \a) = (F(p); \a) \) for all \( n \)-vectors \( \a \).     
			  The latter is useful for modeling incompressible fluids. One can also consider \( A \in \A_k^s(U) \) for any \( 0 \le k \le n \) and \( s \ge 0 \) and model particles, including dipoles of any order.  Fluids which are partially compressible can be modeled by using a linear combination of the two approaches  \(  c_1\, (\perp F_* \perp) + c_2\, (F_*) \) .   For \( 0 < k < n \), then \( \perp F_* \perp \) modifies a simple $k$-element \( (p;\a) \), i.e., changes its mass and $k$-direction,  according to the infinitesimal change of mass and direction of the simple \( (n-k) \)-element   normal to \(  (p;\a) \) while \( F_*   \) alters mass and direction according to how \( F_* \) acts on \( (p;\a) \) itself.  
 			
			 We can take linear combinations to model a flowing material which is partly compressible:   \( c \theta_* E_{e_{n+1}}^\dagger ( J \hat{\times} \widetilde{(a,b)}  + (1-c) \perp \theta_* \perp_t E_{e_{n+1}}^\dagger ( J \hat{\times} \widetilde{(a,b)} \) where \( \perp_t \) is \( \perp \) restricted to the slice of \( \R^{n+1} \) at time \( t \).
			
			\item Let \( \widetilde{M} \) represent a submanifold \( M \) of \( U \).  Then \( [\widetilde{M}_t]_0^{t_0} \) is the differential chain representing the evolution of \( M \), and takes into account infinitesimal distortions induced by the flow of \( X \).     
		\end{enumerate}
\end{examples}

It is not difficult to define evolving chains in space-time and the author is developing extensions of Reynolds' transport and its applications from this viewpoint in \cite{reynolds}. 

\begin{lem}\label{lem:movingchain}  
	The following relations hold:
\begin{enumerate}
 	\item \( \theta_*(J \hat{\times} (t;1)) = V_{t*} J \);
 	\item  \( P_V \theta_* = \theta_* P_{e_{n+1}} \);
 	\item  \( \p[J_t]_a^b = [\p J_t]_a^b \);
\end{enumerate}
\end{lem}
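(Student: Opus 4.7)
The plan is to establish (a) by a direct calculation on simple elements, deduce (b) from the flow-intertwining property of \( \theta \) via duality with the Lie derivative, and derive (c) by chaining the operator identities already at our disposal, with the vanishing of endpoint contributions being the decisive geometric fact.

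For (a), I would start on a simple \(k\)-element \((p;\alpha)\in\A_k(U)\) together with the scalar \(0\)-element \((t;1)\). The Cartesian wedge product gives \((p;\alpha)\hat\times(t;1)=((p,t);\iota_{1*}\alpha)\), and the factor \(\iota_{1*}\alpha\) lies in \(\Lambda_k(\R^n\times\{0\})\subset\Lambda_k(\R^{n+1})\). Because \(\theta(\cdot,t)=V_t\), the restriction of \(D\theta_{(p,t)}\) to the first \(n\) coordinates is exactly \(DV_t|_p\), so \(\theta_*((p,t);\iota_{1*}\alpha)=(V_t(p);V_{t*}\alpha)=V_{t*}(p;\alpha)\). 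Extend by linearity to \(\A_k(U)\) and by joint continuity of \(\hat\times\) (Theorem \ref{cart}) and \(\theta_*\) (Theorem \ref{thm:prop2}) to all of \(\hB_k^r(U)\).

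For (b), the cleanest route is duality. For every matching pair \(\omega\in\B_k(U)\) and \(J\in\hB_k(U\times[0,t_0))\), Theorems \ref{thm:preV} and \ref{cor:pull} yield
\[
\cint_{P_V\theta_*J}\omega=\cint_J\theta^*L_V\omega,\qquad \cint_{\theta_*P_{e_{n+1}}J}\omega=\cint_J L_{e_{n+1}}\theta^*\omega.
\]
The identity therefore reduces to the form-level statement \(\theta^*L_V=L_{e_{n+1}}\theta^*\), which follows from the intertwining \(\theta\circ\tau_s=V_s\circ\theta\) (with \(\tau_s(p,t):=(p,t+s)\)) since
\[
L_{e_{n+1}}\theta^*\omega=\tfrac{d}{ds}\Big|_0\tau_s^*\theta^*\omega=\tfrac{d}{ds}\Big|_0(\theta\circ\tau_s)^*\omega=\tfrac{d}{ds}\Big|_0(V_s\circ\theta)^*\omega=\theta^*L_V\omega.
\]
Alternatively, one can work on simple chains using the consequence \(D\theta_{(p,t+s)}=DV_s|_{\theta(p,t)}\circ D\theta_{(p,t)}\) and the \(V\)-invariance \(V_{s*}V=V\), comparing the defining limits of the two composed operators term by term.

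For (c), I would chain commutation relations. Corollary \ref{cor:FBD} moves \(\partial\) past \(\theta_*\); the graded commutation of \(\partial\) with \(E_{e_{n+1}}^\dagger\) recorded in Theorem \ref{thm:Gpush}(b) moves it past the retraction; and the Cartesian-wedge boundary formula from Theorem \ref{cart}(c) expands
\[
\partial\!\left(J\hat\times\widetilde{(a,b)}\right)=(\partial J)\hat\times\widetilde{(a,b)}+(-1)^k J\hat\times\big((b;1)-(a;1)\big).
\]
The endpoint pieces \(J\hat\times(a;1)\) and \(J\hat\times(b;1)\) are \(k\)-chains whose \(k\)-vector lies entirely in \(\iota_{1*}\Lambda_k(\R^n)\), so every inner product \(\langle e_{n+1},v_i\rangle\) appearing in the definition of \(E_{e_{n+1}}^\dagger\) vanishes, and these "caps" are annihilated. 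Only the cross-sectional term survives, and reassembling via the definition of \(\{\cdot\}_a^b\) gives \(\{\partial J_t\}_a^b\). The main obstacle will be the careful sign bookkeeping in (c): reconciling the graded commutation between \(\partial\) and \(E_v^\dagger\) with the Koszul sign \((-1)^k\) in Theorem \ref{cart}(c) and the action of \(E_{e_{n+1}}^\dagger\) on the top factor \(\iota_{2*}e_1=e_{n+1}\); the geometric content, however, is the pleasing observation that the lids of the evolution tube are automatically invisible to \(E_{e_{n+1}}^\dagger\) because they carry no \(e_{n+1}\)-direction.
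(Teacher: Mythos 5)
Your proposal is correct, and parts (a) and (c) follow essentially the same route as the paper: (a) is the same direct computation on a simple element \((p;\alpha)\hat\times(t;1)=((p,t);\iota_{1*}\alpha)\) followed by linearity and continuity, and (c) is the same chain of commutations --- push \(\p\) past \(\theta_*\) by Corollary \ref{cor:FBD}, past \(E_{e_{n+1}}^\dagger\) by Theorem \ref{thm:Gpush}, expand \(\p(J\hat\times\widetilde{(a,b)})\) by the product rule of Theorem \ref{cart}, and kill the endpoint caps because their \(k\)-vectors carry no \(e_{n+1}\)-component, so \(E_{e_{n+1}}^\dagger\bigl(J\hat\times((b;1)-(a;1))\bigr)=0\). (Your worry about the Koszul sign \((-1)^k\) is moot for exactly this reason; note also that \([E_v^\dagger,\p]=0\) is an honest commutator here, not a graded one.)

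Part (b) is where you genuinely diverge. The paper disposes of it in one line by invoking the naturality relation \(F_*P_W=P_{FW}F_*\) of Theorem \ref{pro:mF} with \(F=\theta\) and \(W=e_{n+1}\), using \(\theta_*e_{n+1}=V\). You instead dualize: reduce the chain identity to the form identity \(\theta^*L_V=L_{e_{n+1}}\theta^*\) and prove that from the intertwining \(\theta\circ\tau_s=V_s\circ\theta\). Both arguments are sound, but yours buys something real: Theorem \ref{pro:mF} states its parts (b)--(d) under the hypothesis that \(F\) is a diffeomorphism onto its image, and \(\theta:U\times[0,t_0)\to U\) is not (it collapses a dimension), so the paper's citation is, strictly speaking, outside the stated hypotheses. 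Your duality argument needs only the change-of-variables and change-of-order dualities plus the nondegeneracy of the pairing, and the flow intertwining you use is exactly the structure that makes the naturality relation hold for this particular non-injective \(F\). The paper's route is shorter; yours is the one that actually closes the logical gap.
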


\begin{proof}\mbox{} 
	(a):  It suffices to prove this for \( J = (p;\a) \), a simple \( k \)-element with \( p \in U \).  But \( \theta_*((p;\a) \hat{\times} (t;1))  = \theta_*((p,t); \iota_{1*}\a) = V_{t*} (p;\a) \) since \( \theta(p,t) = V_t(p) \). 
	
	(b): By Theorem \ref{thm:mF} (c), \( \theta_* P_{e_{n+1}} = P_{\theta e_{n+1}} \theta_* = P_V \theta_* \) 

(c):  

\begin{align*}
   \p[J_t]_a^b &= \p \theta_* E_{e_{n+1}}^\dagger(J \hat{\times} \widetilde{(a,b)}) && \text{by Definition \ref{FTCPlus} }  \\&
 =     \theta_*\p E_{e_{n+1}}^\dagger(J \hat{\times} \widetilde{(a,b)})  && \text{by Proposition \ref{prop:whitneybd}}  \\&
    =   \theta_* E_{e_{n+1}}^\dagger \p (J \hat{\times} \widetilde{(a,b)})  && \text{by Theorem \ref{thm:Gpush}(b)}   
 \\& =   \theta_* E_{e_{n+1}}^\dagger (\p J \hat{\times} \widetilde{(a,b)} - J \hat{\times} ((b;1) -  (a;1)) ) && \text{by Theorem \ref{thm:cart}(b)}  
 \\& = [(\p J)_t]_a^b  &&\text{by \eqref{FTCPlus} applied to }   \p J  \mbox{ and  }\\&\quad &&     \text{since } E_{e_{n+1}}^\dagger( J \hat{\times} ((b;1) -  (a;1))) = 0. 
\end{align*}  
\end{proof} 

\subsection{New fundamental theorems} 
\label{sub:fundamental_theorem}

Given \( (a,b) \subset \R \), \( t\in \R \), and a differential chain \( J \), let \( J_a, J_b \) and \( J_t \) be as in  \S\ref{sub:evolving_chains}. 

\begin{thm}[Fundamental theorems for chains in a flow]\label{thm:Lieder} 
Suppose \( J \in \hB_k^r(U) \) is a differential chain with compact support in \( U \), \( V \in {\cal V}^r(U)\) is a vector field, and \( \o \in \B_k^{r+1}(U) \) is a differential form where \( U \) is open in \( \R^n \). Then 
\[ \cint_{J_b} \o - \cint_{J_a} \o = \cint_{[J_t]_a^b} L_V \o  =  \int_a^b\left(\cint_{J_t} L_V\o \right) dt. \]
\end{thm}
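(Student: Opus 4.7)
The plan is to show that the scalar function $f(t) := \cint_{J_t}\omega$ is of class $C^1$ with derivative $\cint_{J_t}L_V\omega$ and then to identify $\cint_{\{J_t\}_a^b}L_V\omega$ with the Riemann integral $\int_a^b\cint_{J_t}L_V\omega\, dt$; the classical fundamental theorem of calculus then strings the two equalities together.

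First I would compute the derivative. Since $J$ has compact support, so does $J_t$ for $t$ in a neighborhood of $[a,b]$, and $J_{t+h} = V_{h*}(J_t)$. Theorem \ref{thm:prederU} therefore gives $(V_{h*} - \mathrm{Id})(J_t)/h \to P_V J_t$ in $\hB_k^{r+1}(U)$ as $h \to 0$. Applying the separately continuous integral pairing (Theorem \ref{thm:continuousint}) and the Change of order formula (Theorem \ref{thm:preV}) yields
\[
f'(t) = \cint_{P_V J_t}\omega = \cint_{J_t}L_V\omega.
\]
The same argument shows that $t \mapsto \cint_{J_t}L_V\omega$ is continuous on $[a,b]$, so the real-variable fundamental theorem of calculus produces
\[
\cint_{J_b}\omega - \cint_{J_a}\omega \;=\; \int_a^b \cint_{J_t}L_V\omega\, dt.
\]

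It remains to identify $\cint_{\{J_t\}_a^b}L_V\omega$ with this Riemann integral. For a partition $a = t_0 < t_1 < \cdots < t_N = b$ of $[a,b]$ put $D_N := \sum_{j=1}^N(t_j;\Delta t_j\, e_1) \in \A_1(\R)$; the argument of Proposition \ref{prop:cuberep} shows $D_N \to \widetilde{(a,b)}$ in $\hB_1^1(\R)$ as the mesh tends to zero. Continuity of Cartesian wedge (Theorem \ref{cart}(a)), of retraction (Theorem \ref{thm:retX}), and of pushforward (Theorem \ref{thm:prop2}) combine to give
\[
\theta_*E_{e_{n+1}}^\dagger(J\hat{\times}D_N)\;\longrightarrow\;\theta_*E_{e_{n+1}}^\dagger(J\hat{\times}\widetilde{(a,b)})\;=\;\{J_t\}_a^b
\]
in the relevant Banach space. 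A direct computation on simple summands, using the explicit formula from Theorem \ref{cart}(e) together with the creation/annihilation identity from Proposition \ref{pro:asgg}(d) to separate $e_{n+1}$ out of an $\R^n$-valued $k$-vector (a sign convention fixed by the definition \eqref{FTCPlus}), and then Lemma \ref{lem:movingchain}(a), reduces each summand to $\theta_*E_{e_{n+1}}^\dagger(J\hat{\times}(t_j;\Delta t_j e_1)) = \Delta t_j\cdot J_{t_j}$. Pairing with $L_V\omega$ and passing to the limit then converts the left-hand side into $\sum_j\Delta t_j\cint_{J_{t_j}}L_V\omega$, a Riemann sum for $\int_a^b\cint_{J_t}L_V\omega\, dt$, finishing the proof.

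The main obstacle will be the Riemann-sum step: one must verify that the chain-level convergence $J\hat{\times}D_N \to J\hat{\times}\widetilde{(a,b)}$ happens in a topology strong enough for the composition $\theta_*E_{e_{n+1}}^\dagger$ to remain continuous and to pair continuously against $L_V\omega \in \B_k^{r+1}(U)$. This reduces ultimately to the continuity estimates in Theorems \ref{cart}, \ref{thm:retX}, and \ref{thm:prop2}, but it must be executed with care because the Cartesian wedge lifts $J$ into the higher-dimensional ambient $U\times(a,b)$ before $E_{e_{n+1}}^\dagger$ and $\theta_*$ project back to $U$.
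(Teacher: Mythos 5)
Your treatment of the outer equality --- differentiating \( f(t)=\cint_{J_t}\o \) via Theorems \ref{thm:prederU} and \ref{thm:preV} and then invoking the one-variable fundamental theorem --- is essentially the argument the paper itself gives for its second equality, and it is fine. The Riemann-sum identification of \( \cint_{\{J_t\}_a^b}L_V\o \) with \( \int_a^b\cint_{J_t}L_V\o\,dt \), however, has a one-degree regularity shortfall exactly at the step you flag as the main obstacle, and the continuity estimates you cite do not close it. The Dirac approximations \( D_N\to\widetilde{(a,b)} \) converge in \( \hB_1^1(\R) \) but not in the mass norm (the mass of \( D_N-\widetilde{(a,b)} \) stays near \( 2(b-a) \)), so Theorem \ref{cart}(a) only yields \( J\hat{\times}D_N\to J\hat{\times}\widetilde{(a,b)} \) in \( \hB_{k+1}^{r+1}(U\times\R) \); the special estimate of Theorem \ref{cart}(b), which keeps the class at \( r \), applies to the interval itself but not to the differences \( J\hat{\times}(D_N-\widetilde{(a,b)}) \). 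After \( E_{e_{n+1}}^\dagger \) and \( \theta_* \) you therefore have convergence of \( \sum_j\Delta t_j\,J_{t_j} \) to \( \{J_t\}_a^b \) only in \( \hB_k^{r+1}(U) \). But under the stated hypotheses \( L_V\o \) is only of class \( B^r \), and by Theorem \ref{thm:derham} a form of class \( B^r \) that is not of class \( B^{r+1} \) is not a continuous functional on \( \hB_k^{r+1}(U) \); so ``pairing with \( L_V\o \) and passing to the limit'' is not justified. The step is repairable: establish your identity first for the mollifications \( (L_V\o)_c\in\B_k^{r+1}(U) \), then let \( c\to0 \) using Theorem \ref{thm:injection}(b),(d) applied to the fixed \( B^r \) chains \( \{J_t\}_a^b \) and \( J_t \) (with a dominated-convergence argument in \( t \)). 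As written, though, the proposal proves the middle equality only under a stronger hypothesis on \( \o \).

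The paper avoids approximating \( \widetilde{(a,b)} \) altogether: it writes \( \cint_{\{J_t\}_a^b}L_V\o=\cint_{P_V\{J_t\}_a^b}\o \) and computes \( P_V\{J_t\}_a^b \) exactly, using the naturality relation \( P_V\theta_*=\theta_*P_{e_{n+1}} \) of Theorem \ref{pro:mF}, the identity \( P_{e_{n+1}}E_{e_{n+1}}^\dagger=\p_{e_{n+1}} \), and \( \p\widetilde{(a,b)}=(b;1)-(a;1) \) together with Corollary \ref{cor:carte}, landing on \( \theta_*(J\hat{\times}(b;1))-\theta_*(J\hat{\times}(a;1))=J_b-J_a \). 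That operator-algebraic route stays within the norms permitted by the hypotheses; your Riemann-sum picture is a good heuristic for why \( \{J_t\}_a^b \) deserves to be read as \( \int_a^b J_t\,dt \), but it costs a degree of regularity unless supplemented by the mollification step. (Minor point: retracting \( \iota_{1*}\a\wedge e_{n+1} \) against \( e_{n+1} \) produces a factor \( (-1)^k \), so your summands are \( (-1)^k\Delta t_j\,J_{t_j} \); this sign should be tracked explicitly rather than deferred to ``convention.'')
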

 
\begin{proof} 
 Since Dirac chains are dense, and both \( \p_v \) and Cartesian wedge product are continuous for all \( v \in \R^n \), we use Corollary \ref{cor:carte} to conclude \[ \p_{e_1}( \widetilde{(a,b)} \hat{\times} J) = (b;1) \hat{\times} J - (a;1) \hat{\times} J. \]   
  Finally we change coordinates to consider a local flow chart of \( V \): By Theorem \ref{thm:mF} we know \( \p_{\theta e_1} \theta_* = \theta_* \p_{e_1} \). Using Theorem \ref{thm:retXint} we deduce 
\begin{align*} 
	\cint_{[J_t]_a^b} L_V \o &= \cint_{P_V [J_t]_a^b} \o &&\text{by Theorem \ref{thm:predfirst}} \\& = \cint_{P_V\theta_* E_{e_{n+1}}^\dagger ( J \hat{\times} \widetilde{(a,b)})} \o &&\text{by Definition \ref{FTCPlus}} \\& = \cint_{\theta_* P_{e_{n+1}}E_{e_{n+1}}^\dagger (J \hat{\times} \widetilde{(a,b)})} \o &&\text{by Theorem \ref{thm:mF}} \\& = \cint_{ \theta_* \p_{e_{n+1}}(J \hat{\times} \widetilde{(a,b)}) } \o &&\text{by the definition of directional boundary  \ref{dirbound}} \\& = \cint_{\theta_*(J \hat{\times} (b;1) )- \theta_*(J \hat{\times} (a;1) ) } \o &&\text{since } \p(\widetilde{(a,b)}) = (b;1)-(a;1) \\& = \cint_{V_{b*}J} \o -\cint_{V_{a*}J} \o && \quad. 
\end{align*} 

We next establish the second equality:

Let \( f(t) = \cint_{J_t} \o \). It suffices to show that \( f'(t) = \cint_{P_V J_t} \o \). The result will then follow from the fundamental theorem of calculus. Since \( P_V \) is continuous, \( P_V J_t = \lim_{h \to 0} \frac{J_{t+h} - J_t}{h} \) for each \( t \in [a,b] \). Hence
\begin{align*}
f'(t) = \lim_{h \to 0} \frac{f(t+h)-f(t)}{h} = \lim_{h \to 0} \cint_{(J_{t+h}-J_t)/h} \o  = \cint_{P_V J_t}   \o
\end{align*} 
as we hoped. Then \( \int_a^b\left(\cint_{P_V J_t} \o \right) dt = \cint_{J_b} \o - \cint_{J_a} \o \). The result follows by Theorem \ref{thm:preV}.
\end{proof}

  If \( J_a \) is a \( 0 \)-element, then this is the fundamental theorem of calculus for integral curves. 

\begin{thm}[Stokes' theorem for flowing chains]\label{thm:st} 
	Let \( \o \in \B_{k-1}^{r+1}(U) \) be a differential form, \( J \in \hB_k^r(U) \) a differential chain, and \( V \in {\cal V}^r(U) \) a vector field. Then 
	\[ \cint_{[J_t]_a^b} d L_V \o = \cint_{\p J_b} \o - \cint_{\p J_a} \o. \] 
\end{thm} 

\begin{proof} 
This follows directly from Stokes' Theorem \ref{thm:stokes},  Theorem \ref{thm:Lieder}, and Lemma  \ref{lem:movingchain} (c).
\end{proof}

\begin{cor}\label{cor:exactL} 
	Let \( \o \in \B_{k-1}^{r+1}(U) \) be a differential form, \( J \in \hB_k^r(U) \) a differential chain, and \( V \in {\cal V}^r(U) \) a vector field. If \( L_V \o \) is closed, then \( \cint_{\p J_b} \o = \cint_{\p J_a} \o \).
\end{cor}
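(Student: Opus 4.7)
The plan is to invoke Theorem \ref{cor:st} (Stokes' theorem for evolving chains) directly. That theorem gives
\[
    \cint_{\{J_t\}_a^b} d L_V \omega \;=\; \cint_{\partial J_b} \omega - \cint_{\partial J_a} \omega,
\]
so it suffices to show the left-hand side vanishes under the hypothesis that $L_V \omega$ is closed. But $L_V \omega$ closed means $d L_V \omega = 0$ as an element of $\B_k^{r}(U)$, and the integral pairing $\cint_{(\cdot)} (\cdot)$ is bilinear (Theorem \ref{thm:continuousint}), so pairing the chain $\{J_t\}_a^b$ with the zero form produces $0$. Substituting into the displayed identity immediately gives $\cint_{\partial J_b} \omega = \cint_{\partial J_a} \omega$.

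The only thing to verify is that the hypotheses of Theorem \ref{cor:st} are met, namely that $J \in \hB_k^r(U)$ has compact support in $U$, $V \in {\cal V}^r(U)$, and $\omega \in \B_{k-1}^{r+1}(U)$, all of which are assumed in the statement of the corollary (compact support is inherited from the construction of $\{J_t\}_a^b$ in \S\ref{sub:evolving_chains}). There is no real obstacle here; the corollary is essentially a one-line consequence of the previous theorem combined with the definition of a closed form. If one wished to make the argument self-contained without citing Theorem \ref{cor:st}, one could instead combine Theorem \ref{thm:Lieder} with Stokes' theorem \ref{cor:stokesU} applied to $L_V \omega$, obtaining
\[
    \cint_{J_b} \omega - \cint_{J_a} \omega = \cint_{\{J_t\}_a^b} L_V \omega,
\]
and then observe that $d L_V \omega = 0$ also implies (via a second application of Stokes through the boundary relation $\partial\{J_t\}_a^b = \{\partial J_t\}_a^b$ of Lemma \ref{lem:movingchain}(c)) that $\cint_{\partial J_b}\omega - \cint_{\partial J_a}\omega = \cint_{\{ \partial J_t\}_a^b} L_V\omega = \cint_{\partial \{J_t\}_a^b} L_V \omega = \cint_{\{J_t\}_a^b} d L_V \omega = 0$. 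Either route yields the result in a single step.
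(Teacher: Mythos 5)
Your proposal is correct and follows exactly the route the paper intends: the corollary is stated immediately after Theorem \ref{cor:st} with no separate proof, precisely because setting $dL_V\o = 0$ in that theorem's identity yields the conclusion in one line, which is what you do. Your remark about compact support being needed for $\{J_t\}_a^b$ to be defined is a worthwhile observation, since the corollary's statement omits that hypothesis even though the construction in \S\ref{sub:evolving_chains} requires it.
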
 

\subsection{Differentiation of the integral} 
\label{sub:differentiation_of_the_integral}

\begin{defn}
	We say that a one-parameter family of differential $k$-chains \( J_t \in \hB_k^r(U), r \ge 0, \) or \( J_t \in \hB_k(U)  \), is \emph{\textbf{differentiable at time \( t \)}} if \( \frac{\p}{\p t} J_t  = \lim_{h \to 0} (J_{t+h} - J_t) /h  \) is well-defined in \(\hB_k^r(U) \), respectively \( \hB_k(U) \).   Similarly, a one-parameter family of differential $k$-forms \( \o_t \in  \B_k^r(U), r \ge 0 \), or \( \o_t \in \B_k(U) \) is \emph{differentiable at time \( t \)} if \( \frac{\p}{\p t} \o_t  = \lim_{h \to 0} (\o_{t+h} - o_t) /h  \) is well-defined in \( \B_k^r(U) \), respectively \(\B_k(U) \).    
	
\end{defn}

\begin{example}
	Suppose \( f_1:[0,1] \to \R \) is a Lebesgue integrable function and  \( f_t:[0,1] \to \R \) is a monotone increasing sequence of step functions converging to \( f \).  Let \( J_t \) be the differential $1$-chain representing the graph of \( f_t, 0 \le t < 1 \).  For convenience, set \( f_t = f_1 \) for \( 1 \le t \le 2 \).  It follows that any subsequence   \( J_{t_j} \) with increasing \( t_j \le 1 \)  is Cauchy in  \( \hB_1^1(\R^2) \).    Let \( J_1 = \lim_{j \to \i} J_{t_j} \). While    \( \supp(J_1) \) is the graph of \( f_1 \),   \( \supp(\p J_1) \) is the set of discontinuity points in the graph of \( f_1 \), together with the endpoints.   Furthermore \( \frac{\p}{\p t} J_s \) exists in \( \hB_1^2(\R^2) \) and is a $1$-chainlet of dipole order $1$ for each \( 0 < s < 2 \).   
 \end{example}

Numerous other examples are provided using flows.   If \( J_t \) is an \emph{evolving differential $k$-chain}, that is, the pushforward of a $k$-chain \( J \) via the time \( t \) map of a smooth vector field \( V \),  flow,  it follows directly from the definitions that \( J_t \) is differentiable at time \( t \) for each \( t \) of definition,  and 
\begin{equation}\label{ptJ}
	 \frac{\p}{\p t} J_t = P_V J_t.
\end{equation}

 Our next result extends the classical Leibniz Integral Rule to families of differential chains which are differentiable in time.  
\begin{thm}[Generalized Leibniz Integral Rule] \label{thm:Leibniz}   
If \( J_t \in \hB_k^r(U) \) and \( \o_t \in \B_k^r(U) \) are differentiable in time, then
 \[ \frac{\p}{\p t} \cint_{J_t} \o_t =  \cint_{J_t} \frac{\p}{\p t} \o_t + \cint_{\frac{\p}{\p t} J_t} \o_t. \]
\end{thm} 

\begin{proof} 
\begin{align*}
  \frac{\p}{\p t} \cint_{J_t} \o_t = \lim_{h \to 0}  \cint_{\frac{J_{t+h} - J_{t}}{h}} \o_t + \cint_{J_t} \frac{\o_{t+h} - \o_t}{h} =  \cint_{\frac{\p}{\p t} J_t} \o_t + \cint_{J_t} \frac{\p}{\p t} \o_t.
\end{align*} 
\end{proof} 

If \( J_t \) is constant with respect to time, we get \(  \frac{\p}{\p t} \cint_{J_t} \o_t =   \cint_{J_t} \frac{\p}{\p t} \o_t   \), as expected.  This result holds for all families of of differential chains and differential forms that are differentiable in time.  For chains evolving under a flow, we obtain powerful generalizations of Differentiation of the Integral and the Reynolds' Transport theorem of continuum mechanics since  \( J_0 \) can be any differential chain and \( V \) any sufficiently smooth vector field.  
 
   For evolving chains, we immediately deduce from Equation \eqref{ptJ}, Theorem \ref{thm:Leibniz} and the duality \( L_V \o = \o P_V \) (see \S\ref{sub:prederivative}):

 \begin{thm}[Differentiating the Integral]\label{cor:differintegral}  
	Suppose \( \{J_t\} \in \B_k^r(U) \) is an evolving differential $k$-chain under the flow of a vector field \( V \in {\cal V}^{r+1}(U) \), and \( \{\o_t\} \in \B_k^{r+1}(U) \) is a family of smooth differential \( k \)-form differentiable in time.   Then
\[
	\frac{\p}{\p t} \cint_{J_t} \o_t = \cint_{J_t}\left( \frac{\p}{\p t} \o_t + L_V \o_t\right).
\]  
\end{thm}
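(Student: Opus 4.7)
The plan is to deduce this directly from the Generalized Leibniz Integral Rule (Theorem~\ref{thm:differenint}) together with the identification of the time derivative of an evolving chain as the prederivative along the generating vector field. Concretely, I would first verify that the one-parameter family $J_t$ satisfies the differentiability hypothesis required to invoke Theorem~\ref{thm:differenint}: by the definition of an evolving chain, $J_t = V_{t*} J$, and Theorem~\ref{thm:prederU} gives $P_V J_t = \lim_{h \to 0} (V_{h*} - \mathrm{Id})(J_t)/h$ in the appropriate class, which is exactly the relation $\frac{\partial}{\partial t} J_t = P_V J_t$ displayed in equation~\eqref{ptJ}. A small bookkeeping point is that $P_V$ raises the class from $B^r$ to $B^{r+1}$, which is why $\omega_t$ is required to lie in $\B_k^{r+1}(U)$ so that the pairing $\cint_{P_V J_t} \omega_t$ makes sense via the continuous bilinear form established in Theorem~\ref{thm:continuousint}.

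Next, I would apply the Generalized Leibniz Integral Rule to obtain
\[
\frac{\partial}{\partial t} \cint_{J_t} \omega_t = \cint_{J_t} \frac{\partial}{\partial t} \omega_t + \cint_{\frac{\partial}{\partial t} J_t} \omega_t.
\]
Substituting $\frac{\partial}{\partial t} J_t = P_V J_t$ into the second term and invoking the duality of prederivative with Lie derivative from Theorem~\ref{thm:preV} (Change of order), namely $\cint_{P_V J_t} \omega_t = \cint_{J_t} L_V \omega_t$, yields
\[
\cint_{\frac{\partial}{\partial t} J_t} \omega_t = \cint_{J_t} L_V \omega_t.
\]
Combining the two identities gives the stated formula.

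The main obstacle, such as it is, lies not in any clever manipulation but in checking that Theorem~\ref{thm:differenint} actually applies: one must confirm that both limits $\lim_{h\to 0}(J_{t+h}-J_t)/h$ and $\lim_{h\to 0}(\omega_{t+h}-\omega_t)/h$ exist in the appropriate Banach (or inductive limit) spaces so that the separately continuous integral pairing distributes across the difference quotient. For the chain side this is precisely Theorem~\ref{thm:prederU}, provided $J$ has compact support in $U$ and $V \in \mathcal{V}^{r+1}(U)$, ensuring the flow $V_t$ is defined on a neighborhood of $\mathrm{supp}(J)$ for small $t$. For the form side the assumption that $\omega_t$ is differentiable in $\B_k^{r+1}(U)$ is given. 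Once these are in place, the result follows by two lines of substitution, and no further analytic work is required.
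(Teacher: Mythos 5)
Your proposal is correct and follows essentially the same route as the paper, which deduces the theorem immediately from the relation $\frac{\p}{\p t} J_t = P_V J_t$ of Equation \eqref{ptJ}, the Generalized Leibniz Integral Rule (Theorem \ref{thm:differenint}), and the duality $L_V \o = \o P_V$. Your added verification that the differentiability hypotheses hold (via Theorem \ref{thm:prederU} and the compact-support assumption) is a reasonable elaboration of details the paper leaves implicit, not a different argument.
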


The classical method of differentiating the integral has been extremely useful in engineering and physics\footnote{Feynman wrote in his autobiography \cite{joking}, ``That book [Advanced Calculus, by Wood.] also showed how to differentiate parameters under the integral sign —- it's a certain operation. It turns out that's not taught very much in the universities; they don't emphasize it. But I caught on how to use that method, and I used that one damn tool again and again. So because I was self-taught using that book, I had peculiar methods of doing integrals.
The result was, when guys at MIT or Princeton had trouble doing a certain integral, it was because they couldn't do it with the standard methods they had learned in school. If it was contour integration, they would have found it; if it was a simple series expansion, they would have found it. Then I come along and try differentiating under the integral sign, and often it worked. So I got a great reputation for doing integrals, only because my box of tools was different from everybody else's, and they had tried all their tools on it before giving the problem to me.''}.  
From  the duality  \( d\o = \o \p \) (Theorem \ref{thm:stokes}) and  Cartan's Magic Formula \( L_V = d i_V + i_V d \)  we  immediately deduce:
\begin{cor}\label{cor:lei} 
	If \( \{\o_t\} \) is a family of smooth differential \( k \)-form differentiable in time, then  
		\[
			\frac{\p}{\p t} \cint_{J_t} \o_t = \cint_{J_t} \frac{\p}{\p t} \o_t +  \cint_{\p J_t} i_V \o_t.
		\] If \( \{J_t\} \) is an evolving cycle, then 
		\[
			\frac{\p}{\p t} \cint_{J_t} \o_t =     \cint_{J_t} \frac{\p}{\p t} \o_t +  \cint_{  J_t} i_V d \o_t.
		\]	
\end{cor}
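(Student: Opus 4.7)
The plan is to reduce both statements to the master equation of Theorem \ref{cor:differintegral}, namely
\[
\frac{\p}{\p t}\cint_{J_t}\o_t = \cint_{J_t}\frac{\p}{\p t}\o_t + \cint_{J_t} L_V \o_t,
\]
and then analyze the Lie derivative term \( \cint_{J_t} L_V \o_t \) under each of the two hypotheses separately. Since this Corollary is stated as immediate, I expect essentially no new analytic work beyond invoking Cartan's magic formula and the generalized Stokes' theorem \ref{cor:stokes}.

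First I would apply Cartan's magic formula for differential forms, \( L_V \o_t = d\, i_V \o_t + i_V d\, \o_t \), which is dual to Theorem \ref{thm:Gpush}(a) and holds on each \( \o_t \in \B_k^{r+1}(U) \). Inserting this into the master equation gives
\[
\frac{\p}{\p t}\cint_{J_t}\o_t = \cint_{J_t}\frac{\p}{\p t}\o_t + \cint_{J_t} d\, i_V \o_t + \cint_{J_t} i_V d\, \o_t.
\]
Next, Stokes' theorem \ref{cor:stokes} in the form \( \cint_{J_t} d\, i_V \o_t = \cint_{\p J_t} i_V \o_t \) converts the middle term into a boundary integral. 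Thus in general
\[
\frac{\p}{\p t}\cint_{J_t}\o_t = \cint_{J_t}\frac{\p}{\p t}\o_t + \cint_{\p J_t} i_V \o_t + \cint_{J_t} i_V d\, \o_t.
\]

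Now I would specialize. If \( \o_t \) is closed, the final term vanishes because \( d\o_t = 0 \), yielding the first displayed equation. If instead \( J_t \) is a cycle, Stokes' theorem forces \( \cint_{\p J_t} i_V \o_t = 0 \) (since \( \p J_t = 0 \)), leaving the second displayed equation. The only mild technical point to check is that the dimension/order bookkeeping is consistent with the hypotheses of Theorem \ref{cor:differintegral} and Theorem \ref{cor:stokes}: here \( i_V \o_t \in \B_{k-1}^{r+1}(U) \) and \( d\o_t \in \B_{k+1}^r(U) \), so both Stokes applications are legitimate for the matching pairs. No step is truly an obstacle; the entire argument is a direct composition of previously established duality and continuity results.
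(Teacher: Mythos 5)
Your proposal is correct and follows exactly the route the paper intends: the paper derives this corollary ``immediately'' from Theorem \ref{cor:differintegral} together with Cartan's magic formula \( L_V = d\, i_V + i_V d \) and the duality \( d\o = \o\p \) of Stokes' Theorem \ref{cor:stokes}, which is precisely the decomposition and specialization you carry out. Your added remark on the dimension/order bookkeeping for the matching pairs is a sensible check that the paper leaves implicit.
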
  

\begin{cor}[Generalization of the Reynolds' Transport Theorem to nonsmooth domains, evolving in a flow]\label{cor:Reynolds} 		
	Suppose \( J_t \in \B_k^r(U) \) is an evolving differential chain under the flow of a vector field \( V \in {\cal V}^{r+1}(U) \), and \( \o_t \in \B_k^{r+1}(U) \) is a smooth differential form which is differentiable in time.  Then  
	\[
		\frac{\p}{\p t} \left[ \cint_{J_t} \o_t \right] = \cint_{J_t} \frac{\p}{\p t} \o_t + \cint_{\p J_t}i_X \o_t + \cint_{E_X J_t} d \o_t = \cint_{J_t} \frac{\p}{\p t} \o_t + \cint_{P_X J_t} \o_t. 
	\] 
	\end{cor}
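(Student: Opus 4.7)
The plan is to derive both expressions in Corollary \ref{cor:Reynolds} as immediate consequences of Theorem \ref{cor:differintegral} (Differentiating the Integral) combined with the algebraic relationships already established between the primitive operators $\p$, $E_V$, and $P_V$. I would present the second equality first, since it is the shortest: starting from Theorem \ref{cor:differintegral},
\[
\frac{\p}{\p t} \cint_{J_t} \o_t = \cint_{J_t} \frac{\p}{\p t} \o_t + \cint_{J_t} L_V \o_t,
\]
apply the change-of-order integral relation (Theorem \ref{thm:preV}), $\cint_{J_t} L_V \o_t = \cint_{P_V J_t} \o_t$, to rewrite the second term and obtain the right-most expression.

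Next, to obtain the middle expression, I would invoke Cartan's magic formula for differential chains (Theorem \ref{thm:Gpush}(a)), which states $P_V = \{\p, E_V\} = \p E_V + E_V \p$. Applying this to $J_t$ and pairing with $\o_t$ gives
\[
\cint_{P_V J_t} \o_t = \cint_{\p E_V J_t} \o_t + \cint_{E_V \p J_t} \o_t.
\]
Stokes' theorem in open sets (Theorem \ref{cor:stokesU}) turns the first summand into $\cint_{E_V J_t} d\o_t$, while the change-of-dimension integral (Theorem \ref{cor:extV}) turns the second summand into $\cint_{\p J_t} i_V \o_t$. Substituting these into the expression from Theorem \ref{cor:differintegral} produces the middle form of the corollary. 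The two expressions in the conclusion are therefore equal by construction.

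The only subtlety worth checking is that all of these rewrites occur in matching classes: $J_t \in \hB_k^r(U)$ and $\o_t \in \B_k^{r+1}(U)$, so $\p J_t \in \hB_{k-1}^{r+1}(U)$ pairs with $i_V \o_t \in \B_{k-1}^{r+1}(U)$, while $E_V J_t \in \hB_{k+1}^r(U)$ pairs with $d\o_t \in \B_{k+1}^r(U)$. Each application of Stokes, change of dimension, change of order, and Cartan's magic formula has been verified in the text at exactly these regularity levels, so no additional analytic work is required. The main obstacle is purely bookkeeping: keeping the classes $B^r$ versus $B^{r+1}$ consistent across the two applications of the duality theorems, which is straightforward given the explicit ranges stated in Theorems \ref{cor:extV}, \ref{thm:preV}, and \ref{cor:stokesU}.
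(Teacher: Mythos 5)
Your proposal is correct and follows exactly the route the paper intends: the corollary is presented as an immediate consequence of Theorem \ref{cor:differintegral} together with Cartan's magic formula for differential chains (indeed, in the open-set setting $P_V$ is \emph{defined} as $\p E_V + E_V \p$ in Definition \ref{prederivativeV}, so your middle expression is essentially a restatement of that definition passed through the duality relations \ref{cor:stokesU}, \ref{cor:extV}, and \ref{thm:preV}). Your class bookkeeping at levels $B^r$ and $B^{r+1}$ checks out, and you have correctly normalized the paper's stray $X$ to the vector field $V$ of the hypothesis.
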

  
The classical version, a cornerstone of mechanics and fluid dynamics, is a straightforward consequence.  Our result applies to all domains of integration represented by differential chains.   
 
\section{Differential chains in manifolds} 
\label{sec:differential_chains_in_manifolds}

The Levi-Civita connection and metric can be used to extend this theory to open subsets of Riemannian manifolds. (See \cite{thesis}, as well.) We sketch some of the main ideas here.  Let  \( {\cal R}_{U,M} \) be the category whose objects are pairs \( (U, M) \) of open subsets \( U \) of  Riemannian manifolds \( M \), and morphisms are smooth maps.  Let \( TVS \) be the category whose objects are locally convex topological vector spaces, and morphisms are continuous linear maps between them.  Then  \( \hB \) is a functor from  \( {\cal R}_{U,M} \) to \( TVS \). We define \( \hB(U) =  \hB(U, M) \) much as we did for \( M = \R^n \), but with a few changes:   Norms of tangent vectors and masses of \( k \)-vectors are defined using the metric.  The connection can be used to define \( B^r \) norms of vector fields.  Dirac \( k \)-chains are defined as formal sums \( \sum (p_i; \a_i) \) where \( p_i \in U \), \( \a_i \in \L_k(T_{(p_i)}(M)) \).  The vector space of Dirac \( k \)-chains is \( \A_k(U, M) \).  Difference chains are defined using pushforward along the flow of   locally defined, unit vector fields.      From here, we can define the \( B^r \) norms on \( \A_k(U, M) \), and the \( B^r \) norms on forms, and thus the topological vector spaces  \( \hB_k(U, M) \) and \( \B_k(U,M) \).  Useful tools include naturality of the operators    \S\ref{ssub:naturality_of_the_operators}, partitions of unity \S\ref{ssub:partitions_of_unity}, and cosheaves.  
\newpage
\section{Notation}               
	\label{sub:notation_review_for_differential_chains}
	\begin{itemize}  
		\item \( (p;\a) \) \( k \)-element (Definition \ref{diracchains});
		\item \( \|A\|_{B^0} \) mass of a Dirac chain (Definition \ref{def:mass});
		\item \( \A_k(U) \) Dirac chains (Definition \ref{diracchains});
		\item \( T_u \) translation operator (Definition \ref{def:morenotation});
		\item \( \D_u(J) = T_u(J) - J \) difference chain (Definition \ref{def:morenotation}); 
		\item \( \D_{\s^j}(p; \a) \) \( j \)-difference \( k \)-chain (Definition \ref{def:morenotation});
		\item \( \|A\|_{B^r} \) \( B^r \) norm (Definition \ref{def:norms});  
		\item \( \hB_k^r(U) \) differential chains in \( U \) \ref{sub:the};
		\item \( u_k^{r,s} \) linking maps (Lemma \ref{lem:varinj}); 
		\item \( H_k(U) \) hull of linking maps  (Definition \ref{hk}) ;
		\item \( \cint_J \o \) evaluation \S\ref{ssub:integration}; 
		\item \( [S,T] = ST - TS \) and \( \{S, T\} = ST + TS \) where \( S \) and \( T \) are operators. 
		\item \( m_f \) multiplication by a function (Definition \ref{def:mf});  
		\item \( F_* \) pushforward (Definition \ref{push}); 
		\item \( \supp(J) \) support of a chain (Definition\ref{support});
		\item \( E_X \) extrusion operator (Definition \ref{extrusionX});
		\item \( E_X^\dagger \) retraction operator (Definition \ref{retractionX}); 
		\item \( P_X \) prederivative operator (Definitions \ref{prederivative} and \ref{prederivativeX});
		\item \( \p \) boundary operator (Definition \ref{eq:bd}); 
		\item \( \perp \)  perpendicular complement (Definition \ref{perp}); 
		\item \( \lozenge = \perp \p \perp \) coboundary operator (Definition \ref{coboundary});
		\item \(  \square = \lozenge \p + \p \lozenge \) geometric Laplace operator (Definition \ref{laplace});
		\item \( J \hat{\times} K \) Cartesian wedge product (Definition \ref{Cartesian2});
		\item \( M(J) \) mass of a differential chain (Definition \ref{def:masschain});  
		\item \( \{J_t\}_a^b \) evolving chain   \( \S\ref{sub:evolving_chains} \);
		\item \( [J_t]_a^b \) flowing chain  (Equation \eqref{FTCPlus}).
		
	\end{itemize}

\addcontentsline{toc}{section}{References} 
\bibliography{Jennybib.bib, bibliography.bib}{}

\providecommand{\bysame}{\leavevmode\hbox to3em{\hrulefill}\thinspace}
\providecommand{\MR}{\relax\ifhmode\unskip\space\fi MR }
\providecommand{\MRhref}[2]{%
  \href{http://www.ams.org/mathscinet-getitem?mr=#1}{#2}
}
\providecommand{\href}[2]{#2}
\begin{thebibliography}{Har04b}

\bibitem[Eel55]{eells}
James Eells, \emph{Geometric aspects of currents and distributions},
  Proceedings of the National Academy of Sciences \textbf{41} (1955), no.~7,
  4993--496.

\bibitem[Fey85]{joking}
Richard Feynman, \emph{Surely you're joking, {Mr. Feynman}}, Norton, 1985.

\bibitem[Fle66]{fleming}
Wendell~H. Fleming, \emph{Flat chains over a finite coefficient group},
  Transactions of the American Mathematical Society \textbf{121} (1966), no.~1,
  160--186.

\bibitem[Gor78]{goresky}
Mark Goresky, \emph{Triangulation of stratified objects}, Proc. Amer. Math.
  Soc. \textbf{72} (1978), no.~193-200.

\bibitem[Gro73]{AG}
Alexander Grothendieck, \emph{Topological vector spaces}, Gordon Breach, 1973.

\bibitem[Har98a]{continuity}
Jenny Harrison, \emph{Continuity of the integral as a function of the domain},
  Journal of Geometric Analysis \textbf{8} (1998), no.~5, 769--795.

\bibitem[Har98b]{harrison2}
\bysame, \emph{Isomorphisms of differential forms and cochains}, Journal of
  Geometric Analysis \textbf{8} (1998), no.~5, 797--807.

\bibitem[Har99]{earlyhodge}
\bysame, \emph{Flux across nonsmooth boundaries and fractal
  {Gauss/Green/Stokes} theorems}, Journal of Physics A \textbf{32} (1999),
  no.~28, 5317--5327.

\bibitem[Har04a]{soap}
\bysame, \emph{Cartan's magic formula and soap film structures}, Journal of
  Geometric Analysis \textbf{14} (2004), no.~1, 47--61.

\bibitem[Har04b]{plateau}
\bysame, \emph{On {Plateau}'s problem for soap films with a bound on energy},
  Journal of Geometric Analysis \textbf{14} (2004), no.~2, 319--329.

\bibitem[Har06]{hodge}
\bysame, \emph{Geometric {Hodge} star operator with applications to the
  theorems of {Gauss} and {Green}}, Mathematical Proceedings of the Cambridge
  Philosophical Society \textbf{140} (2006), no.~1, 135--155.

\bibitem[Har12]{plateau10}
\bysame, \emph{Soap film solutions of {Plateau's} problem}, Journal of
  Geometric Analysis (2012).

\bibitem[Har13]{linearalgebra}
\bysame, \emph{Universal linear algebra for students of mathematics and
  physics}, Birkh{\"a}user, 2013.

\bibitem[Hod89]{thehodge}
W.V.D. Hodge, \emph{The theory and applications of harmonic integrals},
  Cambridge University Press, 1989.

\bibitem[HP12]{topological}
Jenny Harrison and Harrison Pugh, \emph{Topological aspects of differential
  chains}, Journal of Geometric Analysis \textbf{22} (2012), no.~3, 685--690.

\bibitem[K{\"{o}}t66]{kothe}
Gottfried K{\"{o}}the, \emph{Topological vector spaces}, vol.~I,
  Springer-Verlag, Berlin, 1966.

\bibitem[Lax99]{lax1}
Pater Lax, \emph{Change of variables in multiple integrals}, The American
  Mathematical Monthly \textbf{106} (1999), no.~6, 497--501.

\bibitem[Lax01]{lax2}
\bysame, \emph{Change of variables in multiple integrals {II}}, The American
  Mathematical Monthly \textbf{108} (2001), no.~2, 115--119.

\bibitem[Pug09]{thesis}
Harrison Pugh, \emph{Applications of differential chains to complex analysis
  and dynamics}, Harvard senior thesis (2009).

\bibitem[Rey03]{reynolds}
Osborne Reynolds, \emph{The sub-mechanics of the universe}, Papers on
  Mechanical and Physical Subjects, Cambridge University Press, 1903.

\bibitem[Ste51]{steenrod}
N.~Steenrod, \emph{The topology of fiber bundles}, Princeton University Press,
  1951.

\bibitem[Whi50]{whitneyicm}
Hassler Whitney, \emph{r-dimensional integration in n-space}, Proceedings of
  the ICM \textbf{I} (1950).

\bibitem[Whi57]{whitney}
Hassler Whitney, \emph{{Geometric Integration Theory}}, Princeton University
  Press, Princeton, NJ, 1957.

\bibitem[Whi88]{whitneytopology}
\bysame, \emph{A century of mathematics in {America}}, vol.~1, ch.~Moscow 1935,
  Topology moving toward America, pp.~97--117, American Mathematical Society,
  1988.

\bibitem[Wol48]{wolfethesis}
J.H. Wolfe, \emph{Tensor fields associated with lipschitz cochains}, Ph.D.
  thesis, Harvard, 1948.

\end{thebibliography}
\bibliographystyle{amsalpha}

\bibliographystyle{amsalpha} 

\end{document}